\title
[Anisotropic global microlocal analysis for tempered distributions]
{Anisotropic global microlocal analysis for tempered distributions}
\author[L. Rodino]{Luigi Rodino}
\address{Department of Mathematics, Universit\`a di Torino, Via Carlo Alberto 10,
10123 Torino, Italy}
\email{luigi.rodino[AT]unito.it}
\author[P. Wahlberg]{Patrik Wahlberg}
\address{Dipartimento di Scienze Matematiche, Politecnico di Torino, Corso Duca degli Abruzzi 24,
10129 Torino, Italy}
\email{patrik.wahlberg[AT]polito.it}
\numberwithin{equation}{section}          %Detta gr att man f�r
\newtheorem{thm}{Theorem}
\numberwithin{thm}{section}
\newcommand{\rubrik}{}
\newtheorem{prop}[thm]{Proposition}
\newtheorem{cor}[thm]{Corollary}
\newtheorem{lem}[thm]{Lemma}
\theoremstyle{definition}
\newtheorem{defn}[thm]{Definition}
\newtheorem{example}[thm]{Example}
\theoremstyle{remark}
\newcommand{\scal}[2]{\langle #1,#2\rangle}
\newcommand{\pd}[1] {\partial ^#1}
\newcommand{\pdd}[2] {\partial_{#1} ^{#2}}
\newcommand{\ro}{\mathbf R}
\newcommand{\no}{\mathbf N}
\newcommand{\qo}{\mathbf Q}
\newcommand{\rr}[1]{\mathbf R^{#1}}
\newcommand{\sr}[1]{\mathbf S^{#1}}
\newcommand{\sro}[1]{\mathbf S}
\newcommand{\nn}[1]{\mathbf N^{#1}}
\newcommand{\co}{\mathbf C}
\newcommand{\dd}{\mathrm {d}}
\newcommand{\ep}{\varepsilon}
\newcommand{\fy}{\varphi}
\newcommand{\cdo}{\, \cdot \, }
\newcommand{\supp}{\operatorname{supp}}
\newcommand{\wpr}{{\text{\footnotesize $\#$}}}
\newcommand{\eabs}[1]{\langle #1\rangle}
\newcommand{\Sp}{\operatorname{Sp}}
\newcommand{\GL}{\operatorname{GL}}
\newcommand{\charac}{\operatorname{char}}
\newcommand{\csupp}{\operatorname{conesupp}}
\newcommand{\rB}{\operatorname{B}}
\newcommand{\WF}{\mathrm{WF}}
\newcommand{\WFg}{\mathrm{WF_g}}
\newcommand{\WFgs}{\mathrm{WF_{g}^{\it s}}}
\newcommand{\cS}{\mathscr{S}}
\newcommand{\cF}{\mathscr{F}}
\newcommand{\J}{\mathcal{J}}
\newcommand{\wt}{\widetilde}
\newcommand{\wh}{\widehat}
\def\la{\langle}
\def\ra{\rangle}
\newcommand{\leqs}{\leqslant}
\newcommand{\geqs}{\geqslant}
\begin{document}

\begin{abstract}
We study an anisotropic version of the Shubin calculus of pseudodifferential operators on $\rr d$. 
Anisotropic symbols and Gabor wave front sets are defined in terms of decay or growth along 
curves in phase space of power type parametrized by one positive parameter that distinguishes space and frequency variables. 
We show that this gives subcalculi of Shubin's isotropic calculus, and we show a microlocal as well as a microelliptic 
inclusion in the framework. 
Finally we prove an inclusion for the anisotropic Gabor wave front set of chirp type oscillatory functions with a real polynomial phase function.  
\end{abstract}

\keywords{Tempered distributions, global wave front sets, pseudodifferential operators, Shubin calculus, microlocality, microellipticity, phase space, anisotropy}
\subjclass[2010]{46F05, 46F12, 35A27, 35S05, 35A18, 81S30, 58J47}

\maketitle

%%%%%%%%%%%%%%%%%%%%
\section{Introduction}\label{sec:intro}
%%%%%%%%%%%%%%%%%%%%

In this paper we study an anisotropic version of Shubin's calculus of pseudodifferential operators on $\rr d$ \cite{Shubin1}
and a naturally appearing anisotropic Gabor wave front set. 

Shubin symbols for pseudodifferential operators satisfy estimates involving $1+|x| + |\xi|$, and they are thus isotropic on the phase space $(x,\xi) \in T^* \rr d$. In particular they behave in a way that does not distinguish between $x \in \rr d$ and $\xi \in \rr d$. 

Otherwise expressed, the symbols satisfy growth or decay restrictions on straight lines in phase space of the form 
$\ro_+ \ni \lambda \mapsto (\lambda x, \lambda \xi)$ for $(x,\xi) \in T^* \rr d \setminus 0$. 
For Shubin operators there are results concerning global microlocal analysis involving the Gabor wave front set, introduced by
H\"ormander in \cite{Hormander1} and elaborated in several recent works \cite{Boiti1,Cappiello1,Cappiello3,Carypis1,Cordero1,PRW1,Rodino2,Rodino4,Schulz1,Wahlberg2}. 
The Gabor wave front set detects the lack of superpolynomial decay along straight lines in phase space of the short-time Fourier transform of a tempered distribution. 
It is global in the sense that it measures smoothness and decay at infinity of the distribution comprehensively. It is empty exactly when a tempered distribution is a Schwartz function. 

In this paper we replace the weight $1+|x| + |\xi|$ by $1+|x| + |\xi|^{\frac{1}{s}}$ where $s > 0$. 
We introduce Shubin type symbols with anisotropic behaviour, with decay or growth along power type curves 
in phase space of the form
\begin{equation}\label{eq:curvephasespace}
\ro_+ \ni \lambda \mapsto (\lambda x, \lambda^s \xi)
\end{equation}
for $(x,\xi) \in T^* \rr d \setminus 0$. 

The idea of anisotropy in pseudodifferential calculus has been around for a long time, cf. \cite{Lascar1,Parenti1},
with recent contributions exemplified by \cite{Garello1}. 
These works treat mainly anisotropic behavior in the frequency variable $\xi \in \rr d$ with $d$ parameters, for fixed $x \in \rr d$. 
Our idea is to study global anisotropy comprehensively in the phase space $T^* \rr d$. 
For simplicity we use only one parameter for the relation between the space and the frequency variables. 
Even the idea of anisotropic pseudodifferential calculus on $T^* \rr d$ is not new, cf. \cite{Cappiello0,Cappiello4,Martin1,Parenti1}, 
but as far as we know a systematic microlocal analysis has not yet been fully developed. The aim of our paper is to contribute to such a calculus and adapted microlocal analysis.  

For $s > 0$ and $m \in \ro$ we study symbols that are smooth and satisfy estimates of the form 
\begin{equation*}
|\pdd x \alpha \pdd \xi \beta a(x,\xi)|
\lesssim ( 1 + |x| + |\xi|^{\frac1s} )^{m - |\alpha| - s |\beta|}, \quad (x,\xi) \in T^* \rr d, \quad \alpha, \beta \in \nn d. 
\end{equation*}
This is a generalization of the isotropic Shubin symbols that satisfy the estimates with $s=1$. 
When $s \neq 1$ an anisotropic symbol is still embedded in an isotropic Shubin symbol space of possibly higher order.
These symbol classes were introduced in \cite[Definition~3.1]{Parenti1}, 
and the corresponding basic calculus is briefly stated there without proofs. 
In this paper we provide detailed proofs of the calculus from scratch, and extend the analysis to an adapted anisotropic Gabor microlocal analysis. 

For fixed $s > 0$ we show that the anisotropic symbols give rise to a subcalculus  of the isotropic Shubin calculus. 
More precisely the anisotropic symbol classes are independent of the quantization parameter that admits transfer between 
Weyl and Kohn--Nirenberg quantization. 
They are also stable with respect to operator composition as well as formal adjoint. 

Then we introduce the corresponding notion of anisotropic Gabor wave front set $\WFgs (u)$ of a tempered distribution $u$. 
This means the complement of curves of the form \eqref{eq:curvephasespace} 
in a neighborhood of which the short-time Fourier transform decays superpolynomially. 
The neighborhoods are $s$-conic, that is if a point $(x,\xi) \in T^* \rr d \setminus 0$ belongs to the neighborhood then it contains the whole curve  \eqref{eq:curvephasespace}, 
and so is the anisotropic Gabor wave front set. 

The first main result that we present is the microlocal inclusion 
\begin{equation*}
\WFgs ( a^w(x,D) u) \subseteq \WFgs ( u),   
\end{equation*}
where $u$ is a tempered distribution, $a$ is an isotropic Shubin symbol, and $a^w(x,D)$ denotes the 
Weyl quantization. 

The second main result is the microelliptic inclusion 
\begin{equation*}
\WFgs (u) \subseteq \WFgs ( a^w(x,D) u ) \bigcup \charac_{s,m_1}(a)
\end{equation*}
where again $u$ is a tempered distribution, 
$a$ is an anisotropic Shubin symbol with parameter $s > 0$ and order $m$, $m_1 \leqs m$
and $\charac_{s,m_1}(a)$ is a notion of microlocal characteristic set adapted to the anisotropic Shubin calculus
(see Definition \ref{def:noncharacteristic}). 

Taken together these results imply 
\begin{equation*}
\WFgs ( a^w(x,D) u ) = \WFgs (u) 
\end{equation*}
if $\charac_{s,m_1}(a) = \emptyset$ for some $m_1 \leqs m$.

The paper is organized as follows. 
Section \ref{sec:prelim} sets the stage in terms of notations and some definitions, and a background 
on pseudodifferential operators in the Weyl quantization with isotropic Shubin symbols. 
In Section \ref{sec:anisotropicshubin} we introduce the anisotropic Shubin symbols for a fixed parameter $s > 0$. 
We show adapted asymptotic expansions, and invariance under a commonly used family of quantizations 
parametrized by a real parameter. This family includes the Weyl as well as the Kohn--Nirenberg quantization. 
We also show the continuity of the Weyl product acting on the anisotropic symbol Fr\'echet spaces, 
and we discuss $s$-conic cutoff functions. 

Section \ref{sec:anisotropicgaborWF} is devoted to the anisotropic Gabor wave front set. 
We state the definition, discuss a few properties and show that it does not depend of the chosen nonzero Schwartz 
window function in the short-time Fourier transform. 
The full metaplectic invariance of the isotropic ($s=1$) Gabor wave front set does not hold when $s \neq 1$ but
we show a few partial such invariances. 

In Section \ref{sec:microlocalgabor} we show that pseudodifferential operators with isotropic Shubin symbols are microlocal with respect to 
all anisotropic Gabor wave front sets. In particular microlocality holds for anisotropic Shubin symbols. 
Another consequence is the invariance of anisotropic Gabor wave front sets with respect to translation and modulation. 

Section \ref{sec:microellipticgabor} treats a microelliptic inclusion for the anisotropic Gabor wave front set and anisotropic Shubin symbols
with $s > 0$ fixed. 
Finally in Section \ref{sec:chirp} we show inclusions and equalities for the anisotropic Gabor wave front set of oscillatory functions with phase functions that are real polynomials on $\rr d$ of order $m \geqs 2$. 
The anisotropy parameter is $s = m-1$. 

%%%%%%%%%%%%%%%%%%%%
\section{Preliminaries}\label{sec:prelim}
%%%%%%%%%%%%%%%%%%%%

The unit sphere in $\rr d$ is denoted by $\sr {d-1} \subseteq \rr d$. 
A ball of radius $r > 0$ in $\rr d$ is denoted by $\rB_r$, 
and $e_j \in \rr d$ is the vector of zeros except for position $j$, $1 \leqs j \leqs d$, where it is one. 
The transpose of a matrix $A \in \rr {d \times d}$ is denoted by $A^T$. 
We write $f (x) \lesssim g (x)$ provided there exists $C>0$ such that $f (x) \leqs C \, g(x)$ for all $x$ in the domain of $f$ and of $g$. 
If $f (x) \lesssim g (x) \lesssim f(x)$ then we write $f \asymp g$. 
We use the bracket $\eabs{x} = (1 + |x|^2)^{\frac12}$ for $x \in \rr d$. 
Peetre's inequality with optimal constant \cite[Lemma~2.1]{Rodino3} is
\begin{equation*}
\eabs{x+y}^s \leqs \left( \frac{2}{\sqrt{3}} \right)^{|s|} \eabs{x}^s\eabs{y}^{|s|}\qquad x,y \in \rr d, \quad s \in \ro. 
\end{equation*}
The normalization of the Fourier transform is
\begin{equation*}
 \cF f (\xi )= \widehat f(\xi ) = (2\pi )^{-\frac d2} \int _{\rr
{d}} f(x)e^{-i\scal  x\xi }\, \dd x, \qquad \xi \in \rr d, 
\end{equation*}
for $f\in \cS(\rr d)$ (the Schwartz space), where $\scal \cdo \cdo$ denotes the scalar product on $\rr d$. 
The conjugate linear action of a tempered distribution $u \in \cS'(\rr d)$ on a test function $\phi \in \cS(\rr d)$ is written $(u,\phi)$, consistent with the $L^2$ inner product $(\cdo ,\cdo ) = (\cdo ,\cdo )_{L^2}$ which is conjugate linear in the second argument. 

Denote translation by $T_x f(y) = f( y-x )$ and modulation by $M_\xi f(y) = e^{i \scal y \xi} f(y)$ 
for $x,y,\xi \in \rr d$ where $f$ is a function or distribution defined on $\rr d$. 
The composed operator is denoted by $\Pi(x,\xi) = M_\xi T_x$. 
Let $\fy \in \cS(\rr d) \setminus \{0\}$. 
The short-time Fourier transform (STFT) of a tempered distribution $u \in \cS'(\rr d)$ is defined by 
\begin{equation*}
V_\fy u (x,\xi) = (2\pi )^{-\frac d2} (u, M_\xi T_x \fy) = \cF (u T_x \overline \fy)(\xi), \quad x,\xi \in \rr d. 
\end{equation*}
The function $V_\fy u$ is smooth and polynomially bounded \cite[Theorem~11.2.3]{Grochenig1}, that is 
there exists $k \geqs 0$ such that 
\begin{equation}\label{eq:STFTtempered}
|V_\fy u (x,\xi)| \lesssim \eabs{(x,\xi)}^{k}, \quad (x,\xi) \in T^* \rr d.  
\end{equation}
We have $u \in \cS(\rr d)$ if and only if
\begin{equation}\label{eq:STFTschwartz}
|V_\fy u (x,\xi)| \lesssim \eabs{(x,\xi)}^{-N}, \quad (x,\xi) \in T^* \rr d, \quad \forall N \geqs 0.  
\end{equation}

The inverse transform is given by
\begin{equation}\label{eq:STFTinverse}
u = (2\pi )^{-\frac d2} \iint_{\rr {2d}} V_\fy u (x,\xi) M_\xi T_x \fy \, \dd x \, \dd \xi
\end{equation}
provided $\| \fy \|_{L^2} = 1$, with action under the integral understood, that is 
\begin{equation}\label{eq:moyal}
(u, f) = (V_\fy u, V_\fy f)_{L^2(\rr {2d})}
\end{equation}
for $u \in \cS'(\rr d)$ and $f \in \cS(\rr d)$, cf. \cite[Theorem~11.2.5]{Grochenig1}. 

We will use
\begin{equation*}
|x + y|^{\frac1s} \leqs \kappa(s^{-1}) \left(  |x|^{\frac1s} + |y|^{\frac1s} \right), \quad x,y \in \rr d, \quad s > 0,
\end{equation*}
where
\begin{equation*}
\kappa (t ) =
\left\{
\begin{array}{ll}
1 & \mbox{if} \quad 0 < t \leqs 1 \\
2^{t-1} & \mbox{if} \quad t > 1
\end{array}
\right. .
\end{equation*}

Let $s > 0$. We use the weight function on $(x,\xi) \in T^* \rr d$
\begin{equation}\label{eq:weightanisotrop}
\mu_s(x,\xi) = 1 + |x| + |\xi|^{\frac1s}. 
\end{equation}

The following inequality of Peetre type holds.

\begin{lem}\label{lem:speetre}
If $t \in \ro$ then 
\begin{equation*}
\mu_s( x+y, \xi + \eta)^t 
\leqs C_{s,t} \mu_s( x, \xi)^{|t|} \mu_s( y, \eta)^t, \quad x, y, \xi, \eta \in \rr d. 
\end{equation*}
\end{lem}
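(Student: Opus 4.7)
The plan is to reduce everything to the submultiplicative case $t=1$, which together with the symmetry $\mu_s(x,\xi)=\mu_s(-x,-\xi)$ yields all other exponents by a standard swap.

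First I would establish the submultiplicative estimate
\begin{equation*}
\mu_s(x+y,\xi+\eta) \leqs C_s \, \mu_s(x,\xi)\,\mu_s(y,\eta), \qquad x,y,\xi,\eta \in \rr d,
\end{equation*}
with $C_s = \max(1,\kappa(s^{-1}))$. To see this, apply the ordinary triangle inequality to the $x$-variable and the already-recorded fractional inequality $|\xi+\eta|^{1/s} \leqs \kappa(s^{-1})(|\xi|^{1/s}+|\eta|^{1/s})$ to the $\xi$-variable, obtaining
\begin{equation*}
\mu_s(x+y,\xi+\eta) \leqs 1 + |x|+|y| + \kappa(s^{-1})\bigl(|\xi|^{1/s}+|\eta|^{1/s}\bigr).
\end{equation*}
On the other hand, writing $A=|x|+|\xi|^{1/s}$ and $B=|y|+|\eta|^{1/s}$, the elementary identity $(1+A)(1+B) \geqs 1+A+B$ gives
\begin{equation*}
\mu_s(x,\xi)\,\mu_s(y,\eta) \geqs 1+|x|+|y|+|\xi|^{1/s}+|\eta|^{1/s},
\end{equation*}
and the submultiplicative bound follows after absorbing the factor $\kappa(s^{-1})$ into $C_s$.

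Next, for $t \geqs 0$, raise the submultiplicative estimate to the $t$-th power to obtain the lemma with $C_{s,t}=C_s^{t}$. For $t<0$, apply the submultiplicative inequality with the roles of the arguments re-arranged, namely to the points $(y,\eta)$ and $(-x,-\xi)$, to get
\begin{equation*}
\mu_s(y,\eta) \leqs C_s \, \mu_s(x+y,\xi+\eta)\,\mu_s(x,\xi),
\end{equation*}
using that $\mu_s$ is invariant under sign change. Solving for $\mu_s(x+y,\xi+\eta)$ and raising to the negative power $t$ (which reverses the inequality) yields
\begin{equation*}
\mu_s(x+y,\xi+\eta)^t \leqs C_s^{|t|}\, \mu_s(x,\xi)^{|t|}\,\mu_s(y,\eta)^t,
\end{equation*}
so that $C_{s,t}=C_s^{|t|}$ works uniformly.

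There is no substantial obstacle here; the only point that requires care is respecting the distinct behaviour of the two coordinates in $\mu_s$, which is precisely what forces the replacement of the usual triangle inequality by the $\kappa(s^{-1})$-triangle inequality on the $\xi$-variable, and thus determines the constant $C_s$. The asymmetry between the exponents $|t|$ and $t$ in the stated inequality is handled cleanly by the swap described above, exactly as in the standard Peetre inequality recalled earlier in the section.
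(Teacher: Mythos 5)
Your proof is correct and follows essentially the same route as the paper: the triangle inequality in $x$ together with the $\kappa(s^{-1})$-inequality in $\xi$, followed by a multiplicative comparison to $\mu_s(x,\xi)\mu_s(y,\eta)$ (your bookkeeping even yields the slightly better constant $\kappa(s^{-1})$ in place of $\kappa(s^{-1})^2$). The explicit reduction from general $t$ to $t=1$ via raising to powers and the sign-change swap is exactly the standard Peetre argument that the paper's "we may assume $t=1$" leaves implicit.
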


\begin{proof}
We may assume $t = 1$. 
We have 
\begin{align*}
\mu_s( x+y, \xi + \eta)
& = 1 + |x + y| + |\xi + \eta|^{\frac1s} \\
& \leqs 1 + |x| + |y| + \kappa(s^{-1}) |\xi|^{\frac1s} + \kappa(s^{-1}) |\eta|^{\frac1s} \\
& \leqs \left(1 + |x| + \kappa(s^{-1}) |\xi|^{\frac1s} \right) \left( 1 + |y| + \kappa(s^{-1}) |\eta|^{\frac1s} \right) \\
& \leqs \kappa(s^{-1})^2 \mu_s(x,\xi) \mu_s(y,\eta). 
\end{align*}
\end{proof}

For $s > 0$ we will use subsets of $T^* \rr d \setminus 0$ that are $s$-conic, 
that is subsets closed under the operation $T^* \rr d \setminus 0 \ni (x,\xi) \mapsto ( \lambda x, \lambda^s \xi)$
for all $\lambda > 0$.

%%%%%%%%%%%%%%%%%%%%%%%%
\subsection{Pseudodifferential operators}
%%%%%%%%%%%%%%%%%%%%%%%%

We need some elements from the calculus of pseudodifferential operators \cite{Folland1,Hormander0,Nicola1,Shubin1}. 
Let $a \in C^\infty (\rr {2d})$, $m \in \ro$ and $0 \leqs \rho \leqs 1$. Then $a$ is a \emph{Shubin symbol} of order $m$ and parameter $\rho$, denoted $a\in G_\rho^m$, if for all $\alpha,\beta \in \nn d$ there exists a constant $C_{\alpha,\beta}>0$ such that
\begin{equation}\label{eq:shubinineq}
|\partial_x^\alpha \partial_\xi^\beta a(x,\xi)| \leqs C_{\alpha,\beta} \langle (x,\xi)\rangle^{m - \rho|\alpha + \beta|}, \quad x,\xi \in \rr d.
\end{equation}
The Shubin symbols $G_\rho^m$ form a Fr\'echet space where the seminorms are given by the smallest possible constants in \eqref{eq:shubinineq}.
We write $G_1^m = G^m$. 

For $a \in G_\rho^m$ and $t \in \ro$ a pseudodifferential operator in the $t$-quantization is defined by
\begin{equation}\label{eq:tquantization}
a_t(x,D) f(x)
= (2\pi)^{-d}  \int_{\rr {2d}} e^{i \langle x-y, \xi \rangle} a ( (1-t) x + t y,\xi ) \, f(y) \, \dd y \, \dd \xi, \quad f \in \cS(\rr d),
\end{equation}
when $m<-d$. The definition extends to $m \in \ro$ if the integral is viewed as an oscillatory integral.
If $t=0$ we get the Kohn--Nirenberg quantization $a_0(x,D)$ and if $t = \frac12$ we get the Weyl quantization $a_{1/2}(x,D) = a^w(x,D)$. 
The relation between symbols in different quantizations is \cite{Hormander0}
\begin{equation*}
e^{i t \la D_x , D_\xi \ra } a_t (x,\xi) 
= e^{i s \la D_x , D_\xi \ra } a_s (x,\xi), \quad t,s \in \ro
\end{equation*}
where $e^{i t \la D_x , D_\xi \ra }$ is the Fourier multiplier operator with symbol $e^{i t \la x , \xi \ra }$.
Using \cite[Theorem~7.6.1]{Hormander0} we may write for $t \in \ro \setminus 0$ and $a \in \cS(\rr {2d})$
\begin{equation}\label{eq:quantizationchange}
e^{i t \la D_x , D_\xi \ra } a (x,\xi) 
= (2 \pi |t| )^{-d} \iint_{\rr {2d}} a(y,\eta) e^{ - \frac{i}{t} \la x-y , \xi-\eta \ra } \dd y \, \dd \eta. 
\end{equation}

If $0 < \rho \leqs 1$ then the Shubin symbols are invariant with respect to the parameter $t$ in the sense of $a_t \in G_\rho^m$ 
if and only if $a_s = e^{i (t-s) \la D_x , D_\xi \ra } a_t \in G_\rho^m$ for any $t,s \in \ro$ \cite[Theorem~23.2]{Shubin1}. 
If $t \in \ro$ then for the formal adjoint we have $a_t(x,D)^* = \overline{a}_{1-t}(x,D)$. 
Thus if $a_t \in G_\rho^m$ then $a_t(x,D)^* = b_t(x,D)$ where $b_t \in G_\rho^m$ \cite[Theorem~23.5]{Shubin1}. 

We will use exclusively the Weyl quantization which has several particular features. 
One important such feature is the simplicity of the formal adjoint: $a^w(x,D)^* = \overline{a}^w(x,D)$.  
As for the Shubin symbols, we will see that also the anisotropic symbol classes that we will use in this paper 
give pseudodifferential calculi that are invariant with respect to the quantization parameter $t \in \ro$ 
(see Proposition \ref{prop:calculusanisotropic}). 

If $0 < \rho \leqs 1$ and $a \in G_\rho^m$
then the operator $a^w(x,D)$ acts continuously on $\cS(\rr d)$ and extends uniquely by duality to a continuous operator on $\cS'(\rr d)$.
By Schwartz's kernel theorem the Weyl quantization may be extended to a weak formulation which yields continuous linear operators $a^w(x,D):\cS(\rr{d}) \to \cS'(\rr{d})$, even if $a$ is only an element of $\cS'(\rr{2d})$.

If $a \in \cS'(\rr {2d})$ then 
\begin{equation}\label{eq:wignerweyl}
(a^w(x,D) f, g) = (2 \pi)^{-d} (a, W(g,f) ), \quad f, g \in \cS(\rr d), 
\end{equation}
where the cross-Wigner distribution \cite{Folland1,Grochenig1} is defined as 
\begin{equation*}
W(g,f) (x,\xi) = \int_{\rr d} g (x+y/2) \overline{f(x-y/2)} e^{- i \la y, \xi \ra} \dd y, \quad (x,\xi) \in \rr {2d}. 
\end{equation*}
We have $W(g,f) \in \cS (\rr {2d})$ when $f,g \in \cS(\rr d)$. 

The real phase space $T^* \rr d \simeq \rr d \oplus \rr d$ is a real symplectic vector space equipped with the 
canonical symplectic form
\begin{equation*}
\sigma((x,\xi), (x',\xi')) = \langle x' , \xi \rangle - \langle x, \xi' \rangle, \quad (x,\xi), (x',\xi') \in T^* \rr d. 
\end{equation*}
This form can be expressed with the inner product as $\sigma(X,Y) = \la \J X, Y \ra$ for $X,Y \in T^* \rr d$
where 
\begin{equation*}
\J =
\left(
\begin{array}{cc}
0 & I_d \\
-I_d & 0
\end{array}
\right) \in \rr {2d \times 2d}. 
\end{equation*}
The real symplectic group $\Sp(d,\ro)$ is the set of matrices in $\GL(2d,\ro)$ that leaves $\sigma$ invariant. 
Hence $\J \in \Sp(d,\ro)$. 

To each symplectic matrix $\chi \in \Sp(d,\ro)$ is associated an operator $\mu(\chi)$ that is unitary on $L^2(\rr d)$, and determined up to a complex factor of modulus one, such that
\begin{equation*}
\mu(\chi)^{-1} a^w(x,D) \, \mu(\chi) = (a \circ \chi)^w(x,D), \quad a \in \cS'(\rr {2d})
\end{equation*}
(cf. \cite{Folland1,Hormander0}).
The operator $\mu(\chi)$ is a homeomorphism on $\mathscr S$ and on $\mathscr S'$.

The mapping $\Sp(d,\ro) \ni \chi \rightarrow \mu(\chi)$ is called the metaplectic representation \cite{Folland1}.
It is in fact a representation of the so called $2$-fold covering group of $\Sp(d,\ro)$, which is called the metaplectic group.  
The metaplectic representation satisfies the homomorphism relation modulo a change of sign:
\begin{equation*}
\mu( \chi \chi') = \pm \mu(\chi ) \mu(\chi' ), \quad \chi, \chi' \in \Sp(d,\ro).
\end{equation*}
We do not enter into the geometric subtleties of this construction since they are not needed in this paper. 

Let $0 < \rho \leqs 1$. 
The Weyl product $a \wpr b$ of two symbols $a \in G_\rho^m$ and $b \in G_\rho^n$
is defined as the product of symbols corresponding to operator composition: 
$( a \wpr b)^w(x,D) = a^w(x,D) b^w (x,D)$. 
According to \cite[Theorem~23.6]{Shubin1} $a \wpr b \in G_\rho^{m+n}$ if $a \in G_\rho^m$ and $b \in G_\rho^n$, 
and the bilinear map $(a,b) \mapsto a \wpr b$ is continuous $G_\rho^m \times G_\rho^n \to G_\rho^{m+n}$. 
When $a,b \in \cS(\rr {2d})$ we have the formula \cite[Eq.~(18.5.6)]{Hormander0}
\begin{equation}\label{eq:weylproduct1}
a \wpr b (x,\xi) = e^{\frac{i}{2} \sigma (D_x, D_\xi; D_y, D_\eta)} a(x,\xi) b(y,\eta) \big|_{(y,\eta) = (x,\xi)}. 
\end{equation}
Using \cite[Vol.~3 p.~152]{Hormander0} we may write for $a,b \in \cS(\rr {2d})$
\begin{equation}\label{eq:weylproduct2}
a \wpr b (z) = 
\pi^{-2d} \iint_{\rr {4d}} a(w) b (u) e^{2 i \sigma (z-u, z-w)} \dd w \, \dd u, \quad z \in T^* \rr d.  
\end{equation}
%

%%%%%%%%%%%%%%%%%%%%%%%%%%%%%%%%%
\section{Anisotropic Shubin calculus}\label{sec:anisotropicshubin}
%%%%%%%%%%%%%%%%%%%%%%%%%%%%%%%%%

Let $s > 0$ be fixed. 
We need a simplified version of a tool taken from \cite{Lascar1,Parenti1} and their references. 
Given $(x,\xi) \in \rr {2d} \setminus 0$ there is a unique $\lambda = \lambda(x,\xi) = \lambda_s (x,\xi) > 0$ such that 
\begin{equation*}
\lambda (x,\xi)^{-2} | x |^2 + \lambda (x,\xi)^{-2s} | \xi |^2 = 1. 
\end{equation*}
Then $(x,\xi) \in \sr {2d-1}$ if and only if $\lambda (x,\xi) = 1$. 
By the implicit function theorem the function $\lambda: \rr {2d} \setminus 0 \to \ro_+$ is smooth \cite{Krantz1}. 

If $\mu > 0$ and $(x,\xi) \in \sr {2d-1}$ then $\lambda ( \mu x, \mu^s \xi) = \mu = \mu \lambda (x,\xi)$. 
In fact 
\begin{equation}\label{eq:quasihomogen1}
\lambda ( \mu x, \mu^s \xi) = \mu \lambda (x,\xi)
\end{equation}
holds for any $(x,\xi) \in \rr {2d} \setminus 0$ and $\mu > 0$ by the following argument. 
Given $(x,\xi) \in \rr {2d} \setminus 0$ set $\mu_1 = \lambda (x,\xi)$ so that 
$(x/ \mu_1, \xi/\mu_1^s) \in \sr {2d-1}$. 
Then for $\mu > 0$
\begin{equation*}
\lambda ( \mu x, \mu^s \xi) = \lambda ( \mu \mu_1 x/\mu_1, (\mu \mu_1)^s \xi/\mu_1^s ) 
= \mu \mu_1 
= \mu \lambda (x,\xi). 
\end{equation*}

We may define the projection $p(x,\xi) = p_s(x,\xi)$ of $(x,\xi) \in \rr {2d} \setminus 0$ along the curve $\ro_+ \ni \mu \mapsto (\mu x, \mu^s \xi)$ onto $\sr {2d-1}$. 
This means 
\begin{equation}\label{eq:projection}
p(x,\xi) = \left( \lambda(x,\xi)^{-1} x, \lambda(x,\xi)^{-s} \xi \right), \quad (x,\xi) \in \rr {2d} \setminus 0. 
\end{equation}
Due to \eqref{eq:quasihomogen1} $p(\mu x, \mu^s \xi) = p(x, \xi)$ does not depend on $\mu > 0$. 
The function $p: \rr {2d} \setminus 0 \to \sr {2d-1}$ is smooth since $\lambda \in C^\infty(\rr {2d} \setminus 0)$
and $\lambda(x,\xi) > 0$ for all $(x,\xi) \in \rr {2d} \setminus 0$. 

From \cite{Parenti1}, or by straightforward arguments, we have the bounds
\begin{equation}\label{eq:lambdaboundnonisotropic}
|x| + |\xi|^{\frac1s}
\lesssim \lambda(x,\xi) \lesssim |x| + |\xi|^{\frac1s}, \quad (x,\xi) \in \rr {2d} \setminus 0
\end{equation}
and
\begin{equation}\label{eq:lambdaboundisotropic}
\eabs{ (x,\xi) }^{\min \left( 1, \frac1s \right)}
\lesssim 1 + \lambda(x,\xi) 
\lesssim \eabs{(x,\xi)}^{\max \left( 1, \frac1s \right)}, \quad (x,\xi) \in \rr {2d} \setminus 0. 
\end{equation}

H\"ormander type symbol classes with anisotropic behavior in the frequency domain can be found in \cite[D\'efinition~1.3]{Lascar1} and in \cite[Definition~1.4]{Parenti1}. 
Now we define symbol classes that are adaptations of this concept to the Shubin calculus. 

\begin{defn}\label{def:symbol}
Let $s > 0$ and $m \in \ro$. 
The space of ($s$-)anisotropic Shubin symbols $G^{m,s}$ of order $m$ consists of functions $a \in C^\infty(\rr {2d})$ 
that satisfy the estimates
\begin{equation*}
|\pdd x \alpha \pdd \xi \beta a(x,\xi)|
\lesssim ( 1 + |x| + |\xi|^{\frac1s} )^{m - |\alpha| - s |\beta|}, \quad (x,\xi) \in T^* \rr d, \quad \alpha, \beta \in \nn d. 
\end{equation*}
\end{defn}

The symbols $G^{m,s}$ enjoy the following symmetry: 
If $b(x,\xi) = a(\xi,x)$ then $a \in G^{m,s}$ if and only if $b \in G^{m/s,1/s}$. 
It is clear that  
\begin{equation*}
\bigcap_{m \in \ro} G^{m,s} = \cS(\rr {2d}). 
\end{equation*}
Referring to the weight \eqref{eq:weightanisotrop} we use the seminorms on $a \in G^{m,s}$ indexed by $j \in \no$
\begin{equation}\label{eq:seminormGms}
\| a \|_j = \max_{|\alpha + \beta| \leqs j}
\sup_{(x,\xi) \in \rr {2d} } \mu_s(x,\xi)^{-m + |\alpha| + s |\beta|} \left| \pdd x \alpha \pdd \xi \beta a(x,\xi ) \right|. 
\end{equation}

The symbol classes $G^{m,s}$ with $s \in \qo_+$ (positive rationals) were introduced in \cite[Definition~3.1]{Parenti1}
as a tool in order to construct parametrices for pseudodifferential operators. 
Here we generalize to $s \in \ro_+$ which is a straightforward extension concerning the calculus. 
In \cite[Section~3]{Parenti1} results for a calculus for the symbol classes $G^{m,s}$ are briefly stated without proofs. 
In this section we prove in detail the basic calculus results for the anisotropic Shubin symbols $G^{m,s}$.

We have $G^{m,1} = G^m = G_1^m$, that is the usual Shubin class, 
and we cannot embed $G_\rho^m$ in a space $G^{n,s}$ unless $\rho = s = 1$. 
Using \eqref{eq:lambdaboundnonisotropic} and \eqref{eq:lambdaboundisotropic} the embedding 
\begin{equation}\label{eq:Gmsinclusion}
G^{m,s} \subseteq G_\rho^{m_0}, 
\end{equation}
where $m_0 = \max(m, m/s)$ and $\rho = \min(s, 1/s)$, can be confirmed. 
Thus the Shubin calculus \cite{Shubin1} applies to the anisotropic Shubin symbols. 

We also note that the more general pseudodifferential calculus in \cite{Nicola1} is not directly applicable to the symbol classes $G^{m,s}$ unless $s = 1$. 
In fact if $s \neq 1$ then either the space weight function $\Phi(x,\xi) = 1 + |x| + |\xi|^{\frac1s}$ or the frequency weight function
$\Psi(x,\xi) = 1 + |x|^s + |\xi|$ is not sublinear. 
Nevertheless from \eqref{eq:lambdaboundisotropic} it follows that $G^{m,s} \subseteq S(M; \Phi, \Psi)$ 
as defined in \cite[Definition~1.1.1]{Nicola1} with 
$M(x,\xi) = \eabs{(x,\xi)}^{\max(m, m/s)}$, $\Phi(x,\xi) = \eabs{(x,\xi)}^{\min \left(1, \frac1s \right)}$
and $\Psi(x,\xi) = \eabs{(x,\xi)}^{\min(1,s)}$. 
Thus the pseudodifferential calculus in \cite[Chapter~1.2]{Nicola1} applies to $G^{m,s}$, 
but the anisotropy is again lost. 

There is a more subtle anisotropic subcalculus adapted to the anisotropic Shubin symbols $G^{m,s}$, for each fixed $s > 0$, 
which preserves the anisotropy.
We deduce a minimal such calculus and start 
with asymptotic expansions. 

Given a sequence of symbols $a_j \in G^{m_j,s}$, $j=1,2,\dots$, such that $m_j \to - \infty$ as $j \to + \infty$
we write 
\begin{equation*}
a \sim \sum_{j = 1}^\infty a_j
\end{equation*}
provided that for any $n \geqs 2$
\begin{equation*}
a - \sum_{j = 1}^{n-1} a_j \in G^{\mu_n,s}
\end{equation*}
where $\mu_n = \max_{j \geqs n} m_j$.

\begin{lem}\label{lem:asymptoticsum}
Let $s > 0$. 
Given a sequence of symbols $a_j \in G^{m_j,s}$, $j=1,2,\dots$, such that $m_j \to - \infty$ as $j \to + \infty$, 
there exists a symbol $a \in G^{m,s}$ where $m = \max_{j \geqs 1} m_j$ such that $a \sim \sum_{j = 1}^\infty a_j$. The symbol $a$ is unique modulo addition with a function in $\cS(\rr {2d})$. 
\end{lem}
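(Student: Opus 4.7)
The plan is to carry out a Borel-type summation argument adapted to the $s$-anisotropic weight $\mu_s$, paralleling the standard construction for Shubin symbols \cite{Shubin1} but with cutoffs tailored to the curves \eqref{eq:curvephasespace}. After reordering, I may assume $m_1 = m \geqs m_2 \geqs m_3 \geqs \cdots$. The strategy is to multiply each $a_j$ by a smooth function vanishing in a larger and larger $s$-anisotropic neighbourhood of the origin, sum, and verify the asymptotic.

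For the cutoffs I would exploit the smooth quasi-homogeneous function $\lambda$ supplied by \eqref{eq:quasihomogen1}. Pick $\chi \in C^\infty(\ro)$ with $\chi = 0$ on $(-\infty,1]$ and $\chi = 1$ on $[2,\infty)$, and for a sequence $t_j \uparrow \infty$ to be chosen set
\[
\psi_j(x,\xi) = \chi\!\left( \frac{\lambda(x,\xi)}{t_j} \right), \qquad (x,\xi) \in \rr{2d} \setminus 0,
\]
extended by $0$ at the origin. Differentiating the identity $\lambda(\mu x, \mu^s \xi) = \mu \lambda(x,\xi)$ shows that $\partial_x^\alpha \partial_\xi^\beta \lambda$ is quasi-homogeneous of degree $1-|\alpha|-s|\beta|$, so by compactness of $\sr{2d-1}$ and \eqref{eq:lambdaboundnonisotropic} one has $|\partial_x^\alpha \partial_\xi^\beta \lambda(x,\xi)| \lesssim \lambda(x,\xi)^{1-|\alpha|-s|\beta|}$. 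Fa\`a di Bruno, combined with the fact that the derivatives of $\chi(\cdot/t_j)$ are supported in $\{ t_j \leqs \lambda \leqs 2 t_j \}$, then gives $\psi_j \in G^{0,s}$ with seminorms uniformly bounded in $j$ (for $t_j \geqs 1$).

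Define $a = \sum_j \psi_j a_j$; since only finitely many $\psi_j$ are nonzero at any given point, $a \in C^\infty(\rr{2d})$. The decisive estimate is that on $\supp \psi_j$ one has $\mu_s \asymp \lambda \gtrsim t_j$ by \eqref{eq:lambdaboundnonisotropic}, whence for $n \leqs j$ with $m_j \leqs \mu_n$ and any multiindex,
\[
\mu_s^{-\mu_n + |\alpha| + s|\beta|} \bigl| \partial_x^\alpha \partial_\xi^\beta (\psi_j a_j) \bigr|
\lesssim \mu_s^{m_j - \mu_n}
\lesssim t_j^{m_j - \mu_n},
\]
with implicit constants depending only on a finite seminorm of $a_j$ and on the order of the multiindex. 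Since $m_k \to -\infty$, for each fixed $n$ only finitely many $j \geqs n$ satisfy $m_j = \mu_n$; the remaining indices have $m_j < \mu_n$ strictly, and $t_j^{m_j - \mu_n}$ can be driven to $0$. Picking $t_j$ inductively so that the above display is bounded by $2^{-j}$ whenever $n \leqs j$, $|\alpha| + |\beta| \leqs j$ and $m_j < \mu_n$, makes $\sum_{j \geqs n} \psi_j a_j$ convergent in every seminorm of $G^{\mu_n,s}$.

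To finish, write
\[
a - \sum_{j=1}^{n-1} a_j = \sum_{j \geqs n} \psi_j a_j \; - \; \sum_{j=1}^{n-1} (1 - \psi_j) a_j,
\]
the first piece lying in $G^{\mu_n,s}$ by the previous step, and each summand $(1-\psi_j) a_j$ being smooth and compactly supported, hence in $\cS(\rr{2d}) \subseteq G^{\mu_n,s}$. For uniqueness, any two candidates $a, a'$ satisfy $a - a' \in \bigcap_n G^{\mu_n,s} = \bigcap_{m \in \ro} G^{m,s} = \cS(\rr{2d})$ since $\mu_n \to -\infty$. The main obstacle will be the uniform-in-$j$ symbol control of the $s$-anisotropic cutoffs $\psi_j$, where the nontrivial exponent $s$ interacts with the anisotropic differentiation rule for $\lambda$; once this is in place the remainder is a diagonal Borel construction transported to the $\mu_s$-weight.
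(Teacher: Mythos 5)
Your construction is correct and is essentially the paper's own Borel-type argument: $s$-anisotropic excision functions multiplied onto the $a_j$ with inductively chosen parameters $t_j$, the tail estimated termwise in the $G^{\mu_n,s}$ seminorms, and the finitely many excised pieces absorbed as compactly supported (hence Schwartz) errors. The only real difference is cosmetic: you build the cutoffs as $\chi(\lambda(x,\xi)/t_j)$ from the quasi-homogeneous gauge $\lambda$, which requires the quasi-homogeneous derivative bounds for $\lambda$ plus Fa\`a di Bruno, whereas the paper dilates a fixed cutoff anisotropically, $\varphi(t_j^{-1}x, t_j^{-s}\xi)$, obtaining the same uniform $G^{0,s}$-type estimates directly from the support localization.
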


\begin{proof}
Let $\fy \in C^\infty (\rr {2d})$ satisfy $0 \leqs \fy \leqs 1$, $\fy(z) = 0$ if $|z| \leqs \frac12$ and
$\fy(z) = 1$ if $|z| \geqs 1$. 
Set for $t \geqs 1$
\begin{equation*}
\psi(x,\xi) = \fy( t^{-1} x, t^{-s} \xi), \quad (x,\xi) \in T^* \rr d. 
\end{equation*}
Then for all $t \geqs 1$ we have 
\begin{equation*}
\left| \pdd x \alpha \pdd \xi \beta \psi(x,\xi) \right| 
\leqs C_{\alpha,\beta} \mu_s(x,\xi)^{-|\alpha| - s |\beta|}. 
\end{equation*}
If fact this is trivial if $\alpha = \beta = 0$. 
If instead $(\alpha,\beta) \in \nn {2d} \setminus 0$ then 
\begin{equation*}
\frac14 \leqs t^{-2} |x|^2 + t^{-2 s} |\xi|^2 \leqs 1
\end{equation*}
in the support of $\pdd x \alpha \pdd \xi \beta \fy ( t^{-1}x, t^{-s} \xi)$. 
Thus $|x| + |\xi|^{\frac1s} \lesssim t$ in said support. 
This gives
\begin{equation}\label{eq:psiestimate1}
\begin{aligned}
\left| \pdd x \alpha \pdd \xi \beta \psi(x,\xi) \right|
& = t^{-|\alpha| - s |\beta|} \left| ( \pdd x \alpha \pdd \xi \beta \fy)( t^{-1} x, t^{-s} \xi) \right| \\
& \leqs C_{\alpha,\beta} \mu_s (x,\xi)^{ -|\alpha| - s |\beta| }. 
\end{aligned}
\end{equation}

The symbol $a$ is constructed as 
\begin{equation*}
a(x,\xi) = \sum_{j = 1}^\infty \fy( t_j^{-1} x, t_j^{-s} \xi) a_j (x,\xi) 
\end{equation*}
for a sufficiently rapidly increasing sequence $(t_j) \subseteq \ro_+$. 
Given $n \geqs 2$ we must show $a - \sum_{j = 1}^{n-1} a_j \in G^{\mu_n,s}$. 
We have 
\begin{equation*}
a (x,\xi) - \sum_{j = 1}^{n-1} a_j (x,\xi)
= \sum_{j = 1}^{n-1} \left( \fy( t_j^{-1} x, t_j^{-s} \xi) - 1 \right) a_j (x,\xi) 
+ \sum_{j = n}^{\infty} \fy( t_j^{-1} x, t_j^{-s} \xi) a_j (x,\xi). 
\end{equation*}
The first sum is compactly supported and hence belongs to $G^{\mu_n,s}$ trivially
so it suffices to prove  
\begin{equation}\label{eq:remainstoprove1}
\sum_{j = n}^{\infty} \fy( t_j^{-1} x, t_j^{-s} \xi) a_j (x,\xi) \in G^{\mu_n,s}. 
\end{equation}

First we show 
\begin{equation}\label{eq:step1}
\left| \pdd x \alpha \pdd \xi \beta \left( \fy( t_j^{-1} x, t_j^{-s} \xi) a_j (x,\xi) \right) \right| 
\leqs 2^{- j} \mu_s (x,\xi)^{m_j + 1 -|\alpha| - s |\beta|}
\end{equation}
for all $j \geqs 1$ and $|\alpha + \beta| \leqs j$, 
provided $t_j > 0$ is sufficiently large. 
In fact this estimate is a consequence of $a_j \in G^{m_j,s}$,  \eqref{eq:psiestimate1}, Leibniz' rule, 
and the support properties of $\fy( t_j^{-1} x, t_j^{-s} \xi)$, if $t_j > 0$ is sufficiently large. 

Let $\alpha, \beta \in \nn d$ and pick $N \geqs \max(n+1, |\alpha + \beta| )$ such that $\mu_N \leqs \mu_n - 1$. 
Then for all $j \geqs N$ it holds $m_j \leqs \mu_j \leqs \mu_N \leqs \mu_n - 1$. 
Combined with \eqref{eq:step1} this gives
\begin{equation*}
\sum_{j = N}^{\infty} \left| \pdd x \alpha \pdd \xi \beta \left( \fy( t_j^{-1} x, t_j^{-s} \xi) a_j (x,\xi) \right) \right| 
\leqs 2^{1-N}  \mu_s (x,\xi)^{\mu_n - |\alpha| - s |\beta|}. 
\end{equation*}
Since $\sum_{j = n}^{N-1} \fy( t_j^{-1} x, t_j^{-s} \xi) a_j (x,\xi) \in G^{\mu_n,s}$
we have proved \eqref{eq:remainstoprove1}. 
\end{proof}

We have the following asymptotic expansion for the Weyl product of $a \in G^{m,s}$ and $b \in G^{n,s}$, $m,n \in \ro$ \cite{Shubin1}:
\begin{equation}\label{eq:calculuscomposition1}
a \wpr b(x,\xi) \sim \sum_{\alpha, \beta \geqs 0} \frac{(-1)^{|\beta|}}{\alpha! \beta!} \ 2^{-|\alpha+\beta|}
D_x^\beta \pdd \xi \alpha a(x,\xi) \, D_x^\alpha \pdd \xi \beta b(x,\xi). 
\end{equation}

Each term in the sum belongs to $G^{m+n -(1+s)|\alpha+\beta|,s}$.

In the next result we show that the symbol classes $G^{m,s}$ are invariant with respect to 
the parameter $t \in \ro$ in \eqref{eq:tquantization}. 
In other words if one changes quantization one gets a new symbol in the same class. 
Combined with $a^w(x,D)^* = \overline{a}^w(x,D)$, an immediate consequence 
is that for each $t \in \ro$ the symbol class $G^{m,s}$ is closed with respect to formal adjoint: 
If $a_t \in G^{m,s}$ and $a_t(x,D)^*  = b_t(x,D)$ then $b_t \in G^{m,s}$. 

We also show the continuity of the bilinear Weyl product on the symbol classes $G^{m,s}$. 
Again by the first result the continuity extends to the symbol product in the $t$-quantization 
for any $t \in \ro$.

\begin{prop}\label{prop:calculusanisotropic}
Let $s > 0$ and $m,n \in \ro$. 

\begin{enumerate}[(i)]

\item If $t \in \ro$ and $a \in G^{m,s}$ then $b(x,\xi) = e^{i t \la D_x, D_\xi \ra} a (x, \xi) \in G^{m,s}$, 
and the map $a \mapsto b$ is continuous on $G^{m,s}$. 

\item If $a \in G^{m,s}$ and $b \in G^{n,s}$ then $a \wpr b \in G^{m+n,s}$, and the Weyl product is continuous
\begin{equation*}
\wpr: G^{m,s} \times G^{n,s} \to G^{m+n,s}. 
\end{equation*}

\end{enumerate}

\end{prop}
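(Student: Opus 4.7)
The plan is to obtain an asymptotic expansion in each case, verify that each term lies in an anisotropic Shubin class of decreasing order, use Lemma \ref{lem:asymptoticsum} to assemble a candidate symbol, and then show that the true object differs from the candidate by a Schwartz function.

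For part (i), I would formally expand the exponential to write
\begin{equation*}
b(x,\xi) \sim \sum_{\alpha \in \nn d} \frac{(it)^{|\alpha|}}{\alpha!} D_\xi^\alpha D_x^\alpha a(x,\xi).
\end{equation*}
Differentiating $a \in G^{m,s}$ by $\alpha$ powers in each of $x$ and $\xi$ costs $(1+s)|\alpha|$ in the anisotropic order (one unit per $x$-derivative, $s$ units per $\xi$-derivative), so each summand lies in $G^{m - (1+s)|\alpha|,s}$, with seminorms controlled linearly by seminorms of $a$. Lemma \ref{lem:asymptoticsum} then furnishes $\tilde b \in G^{m,s}$ with this expansion. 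To show $b - \tilde b \in \cS(\rr{2d})$, I would represent $b$ through the oscillatory integral \eqref{eq:quantizationchange}, apply Taylor's formula to $a$ around $(x,\xi)$ to order $N$ with integral remainder, and bound the remainder by anisotropic integration by parts, obtaining the estimate $\mu_s(x,\xi)^{m-(1+s)N}$ and hence $b - \tilde b \in \bigcap_N G^{m-(1+s)N,s} = \cS(\rr{2d})$.

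For part (ii), the scheme is parallel, starting from \eqref{eq:calculuscomposition1}. A direct Leibniz computation shows that each term $\frac{(-1)^{|\beta|}}{\alpha!\beta!} 2^{-|\alpha+\beta|} D_x^\beta D_\xi^\alpha a \cdot D_x^\alpha D_\xi^\beta b$ lies in $G^{m+n-(1+s)|\alpha+\beta|,s}$: further differentiation by $(\beta',\alpha')$, expanded with Leibniz and combined using Lemma \ref{lem:speetre}, produces products bounded by $\mu_s^{m+n-(1+s)|\alpha+\beta| - |\beta'| - s|\alpha'|}$ with constants expressed as finite products of seminorms of $a$ and $b$. Lemma \ref{lem:asymptoticsum} then provides $c \in G^{m+n,s}$ realising the expansion, and the Schwartz remainder $a\wpr b - c$ is obtained from \eqref{eq:weylproduct2} by Taylor expansion on the symplectic phase followed by integration by parts adapted to the anisotropic weight. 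Continuity of both maps is obtained by propagating finitely many seminorms through each step (linear for (i), bilinear for (ii)); an immediate byproduct of (i) combined with $a^w(x,D)^* = \overline{a}^w(x,D)$ is the invariance of $G^{m,s}$ under the formal adjoint in any $t$-quantization.

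The main obstacle is the anisotropic remainder estimate in the last step of each part: the phases in \eqref{eq:quantizationchange} and \eqref{eq:weylproduct2} are isotropic in the integration variables, whereas the desired gain per derivative is $\mu_s^{-1}$ in $x$-variables versus $\mu_s^{-s}$ in $\xi$-variables. This asymmetry forces the integration-by-parts operators to weigh the two coordinate blocks differently, and splitting the integration regions along the $s$-conic cutoffs used in the proof of Lemma \ref{lem:asymptoticsum} is a natural device. This is the only genuinely non-routine ingredient; everything else is bookkeeping driven by Leibniz' rule, Lemma \ref{lem:speetre}, and Lemma \ref{lem:asymptoticsum}.
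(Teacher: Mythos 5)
Your overall strategy (formal expansion, term-by-term membership, Lemma \ref{lem:asymptoticsum}, then a Schwartz or lower-order remainder) is a classically legitimate route, but as written the proof has a genuine gap: the entire analytic content of the proposition is the remainder estimate, and this is exactly the step you defer as ``the main obstacle'' without carrying it out. Worse, for part (i) the expansion route is essentially circular: Taylor expanding $e^{i t \la D_x , D_\xi \ra}$ to order $N$ leaves a remainder which is (up to constants) an average over $\theta \in [0,1]$ of $e^{i \theta t \la D_x , D_\xi \ra}$ applied to $\la D_x, D_\xi \ra^N a \in G^{m-(1+s)N,s}$, so bounding that remainder in the anisotropic scale is precisely statement (i) for a lower-order symbol, uniformly in $\theta$; you cannot avoid proving the direct mapping estimate $\| e^{i t \la D_x, D_\xi \ra} c \|_0 \lesssim \| c \|_{2N}$ on $G^{\mu,s}$ anyway. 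Similarly, for (ii), invoking \eqref{eq:calculuscomposition1} with anisotropic remainders as a starting point would beg the question, since validating that expansion is equivalent to what must be proven. Note also that once the remainder estimate is available, Lemma \ref{lem:asymptoticsum} is superfluous for the statement itself: the order-one remainder already yields $a \wpr b \in G^{m+n,s}$, so the expansion machinery does not reduce the work.

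The diagnosis behind your proposed fix is also off: no anisotropic gain from the (isotropic) phases in \eqref{eq:quantizationchange} and \eqref{eq:weylproduct2} is needed, so neither differently weighted integration by parts nor a splitting along $s$-conic cutoffs is required. The paper's proof shows how to sidestep this entirely: since $e^{i t \la D_x, D_\xi \ra}$ commutes with $\pdd x \alpha \pdd \xi \beta$, and since for the Weyl product one can write $a \wpr b(z) = f(z,z)$ with $f(z,w) = e^{\frac{i}{2}\sigma(D_z,D_w)}(a \otimes b)(z,w)$ and apply Leibniz' rule, every derivative bound reduces to a zeroth-order sup bound for a symbol that has already absorbed the anisotropic gain $-|\alpha|-s|\beta|$ through differentiation. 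That zeroth-order bound is then obtained by regularizing with cutoffs, integrating by parts with the ordinary isotropic operator $(1-\Delta)$ just to produce an integrable factor $\eabs{(y,\eta)}^{-2N}$, and transferring the weight via the Peetre-type inequality of Lemma \ref{lem:speetre} together with \eqref{eq:lambdaboundisotropic}, the loss $\mu_s(y,\eta)^{|m|} \lesssim \eabs{(y,\eta)}^{|m|\max(1,1/s)}$ being absorbed by taking $N$ large. If you want to salvage your outline, replace the sketched ``anisotropic integration by parts'' by exactly this argument; without it, the proof is incomplete.
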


\begin{proof}

\textit{(i)}
We may assume $t \neq 0$ since the claim is trivial otherwise. 
Let $\alpha, \beta \in \nn d$. 
The operator $e^{i t \la D_x, D_\xi \ra}$ commutes with differential operators $\pdd x \alpha \pdd \xi \beta$. 
The distribution $\pdd x \alpha \pdd \xi \beta b = \cF^{-1} \left( e^{i t \la \cdot, \cdot \ra} \wh { \pdd x \alpha \pdd \xi \beta a } \right)$ is well defined in $\cS'(\rr {2d})$. 

Let $\chi \in C_c^\infty(\rr {2d})$ satisfy $0 \leqs \chi \leqs 1$, 
$\chi(z) = 1$ when $|z| \leqs 1$ and $\chi(z) = 0$ when $|z| \geqs 2$.
Set $\chi_\ep (z) = \chi( \ep z)$ for $\ep > 0$. 
Then $\chi_\ep (\pdd x \alpha \pdd \xi \beta a) \to \pdd x \alpha \pdd \xi \beta a$ in $\cS'(\rr {2d})$ as $\ep \to 0^+$.
Hence we obtain from \eqref{eq:quantizationchange} 
\begin{align*}
\pdd x \alpha \pdd \xi \beta b(x,\xi)
& = \cF^{-1} \left( e^{i t \la \cdot, \cdot \ra} \wh {\pdd x \alpha \pdd \xi \beta a} \right) (x,\xi)
= \lim_{\ep \to 0^+} \cF^{-1} \left( e^{i t \la \cdot, \cdot \ra} \cF \left( \chi_\ep \pdd x \alpha \pdd \xi \beta a \right) \right) (x,\xi) \\
& = (2 \pi | t |)^{-d} \lim_{\ep \to 0^+} \int_{\rr {2d}} e^{ - \frac{i}{t} \la x-y,\xi-\eta \ra}  \chi_\ep (y,\eta) \pdd x \alpha \pdd \xi \beta a(y,\eta )  \, \dd y \, \dd \eta 
\end{align*}
in $\cS'(\rr {2d})$. 

Define the operator
\begin{equation*}
(S f) (y,\eta) = (1 - \Delta_{y,\eta} ) \left( \eabs{ t^{-1} (x-y,\xi-\eta)}^{-2} f (y,\eta) \right) 
\end{equation*}
acting on $f \in C^\infty (\rr {2d})$. 
From
\begin{equation*}
(1 - \Delta_{y,\eta} ) e^{ - \frac{i}{t} \la x-y,\xi-\eta \ra}
= \eabs{ t^{-1} (x-y,\xi-\eta)}^2 e^{ - \frac{i}{t} \la x-y,\xi-\eta \ra}
\end{equation*}
we obtain from integration by parts for $N \in \no$
\begin{align*}
(2 \pi | t |)^{d} \pdd x \alpha \pdd \xi \beta b(x,\xi)
& = \lim_{\ep \to 0^+} \int_{\rr {2d}} e^{ - \frac{i}{t} \la x-y,\xi-\eta \ra} S^N \left(  \chi_\ep (y,\eta) \pdd x \alpha \pdd \xi \beta a(y,\eta ) \right) \dd y \, \dd \eta \\
& = \int_{\rr {2d}} e^{ - \frac{i}{t} \la x-y,\xi-\eta \ra} S^N \left( \pdd x \alpha \pdd \xi \beta a(y,\eta ) \right) \dd y \, \dd \eta
\end{align*}
by dominated convergence, since $S^N \pdd x \alpha \pdd \xi \beta a \in L^1(\rr {2d})$ provided $N$ is large enough.

This gives using \eqref{eq:lambdaboundisotropic}, \eqref{eq:seminormGms} and Lemma \ref{lem:speetre} 
\begin{align*}
| \pdd x \alpha \pdd \xi \beta b(x,\xi)|
& \lesssim \int_{\rr {2d}} \left| S^N \left(  \pdd x \alpha \pdd \xi \beta a(y,\eta ) \right) \right| \, \dd y \, \dd \eta \\
& \leqs C_{t,N} \| a \|_{2N + |\alpha+\beta|} \int_{\rr {2d}}  \eabs{ (x-y,\xi-\eta)}^{-2N} \mu_s(y,\eta)^{ m- |\alpha| - s |\beta| }  \, \dd y \, \dd \eta \\
& = C_{t,N} \| a \|_{2N + |\alpha+\beta|} \int_{\rr {2d}}  \eabs{ (y,\eta)}^{-2N} \mu_s( x - y,\xi - \eta)^{ m- |\alpha| - s |\beta| } \, \dd y \, \dd \eta \\
& \lesssim C_{t,N} \| a \|_{2N + |\alpha+\beta|} \mu_s( x,\xi)^{ m- |\alpha| - s |\beta| } \int_{\rr {2d}}  \eabs{ (y,\eta)}^{-2N} \mu_s( y,\eta)^{|m| + |\alpha| + s |\beta| } \, \dd y \, \dd \eta \\
& \leqs C_{t,N} \| a \|_{2N + |\alpha+\beta|} \mu_s( x,\xi)^{ m- |\alpha| - s |\beta| } \int_{\rr {2d}}  \eabs{ (y,\eta)}^{-2N + ( |m| + |\alpha| + s |\beta| )\max \left( 1, \frac1s \right)} \, \dd y \, \dd \eta \\
& \leqs C_{t,N} \| a \|_{2N + |\alpha+\beta|} \mu_s( x,\xi)^{ m- |\alpha| - s |\beta| }
\end{align*}
after possibly increasing $N$ (which may depend on $|\alpha+\beta|$). 
In view of \eqref{eq:seminormGms} we obtain for any $j \in \no$
\begin{equation*}
\| b \|_{j}
\leqs C_{t,N} \| a \|_{2N_j+ j} 
\end{equation*}
for some $N_j \in \no$,
which proves claim \textit{(i)}.

\textit{(ii)}
Due to \eqref{eq:Gmsinclusion} we may use results for the calculus of Shubin symbols $G_\rho^m$. 

When $a,b \in \cS(\rr {2d})$ we have by \eqref{eq:weylproduct1} $a \wpr b(z) = f(z,z)$ where 
\begin{equation*}
f (z,w) = e^{\frac{i}{2} \sigma (D_z, D_w)} (a \otimes b )(z,w), \quad z,w \in \rr {2d}.  
\end{equation*}
Suppose $a \in G^{m,s}$ and $b \in G^{n,s}$. 
Set $a_\ep = \chi_\ep a$ and $b_\ep = \chi_\ep b$ where  $\chi \in C_c^\infty(\rr {2d})$ and $\chi_\ep$ is defined as above. 
Then $a_\ep \otimes b_\ep \to a \otimes b$ in $\cS'(\rr {4d})$ as $\ep \to 0^+$. 
Since $e^{\frac{i}{2} \sigma (D_z, D_w)}$ is continuous on $\cS'(\rr {4d})$ it follows that
\begin{equation}\label{eq:weylproductgen1}
f  (z,w) = \lim_{\ep \to 0^+} e^{\frac{i}{2} \sigma (D_z, D_w)} (a_\ep \otimes b_\ep ) (z,w)
\end{equation}
in $\cS'(\rr {4d})$. 

From the argument in the proof of \cite[Theorem~A.5]{Schulz1} it follows that the limit \eqref{eq:weylproductgen1} is actually
pointwise for all $z,w \in \rr {2d}$. 
The Fourier multiplier operator $e^{\frac{i}{2} \sigma (D_z, D_w)}$ commutes with differential operators so 
for any $\alpha,\beta \in \nn {2d}$ we have the pointwise limit
\begin{equation}\label{eq:weylproductgen2}
\pdd z \alpha \pdd w \beta f  (z,w) = \lim_{\ep \to 0^+} e^{\frac{i}{2} \sigma (D_z, D_w)} (\pd \alpha a_\ep \otimes \pd \beta b_\ep ) (z,w)
\end{equation}
which yields using \eqref{eq:weylproduct2} 
\begin{equation}\label{eq:weylproductgen3}
\begin{aligned}
\pd \alpha (a \wpr b)(z)
& = \pd \alpha ( f(z,z) )
= \sum_{\beta \leqs \alpha} \binom{\alpha}{\beta} (\pdd z \beta \pdd w {\alpha - \beta} f)  (z,z) \\
& = \sum_{\beta \leqs \alpha} \binom{\alpha}{\beta} \lim_{\ep \to 0^+} e^{\frac{i}{2} \sigma (D_z, D_w)} (\pd \beta a_\ep \otimes \partial^{\alpha-\beta} b_\ep ) (z,z) \\
& = \pi^{-2d} \sum_{\beta \leqs \alpha} \binom{\alpha}{\beta} 
\lim_{\ep \to 0^+} 
\iint_{\rr {4d}} e^{2 i \sigma (z-v, z-u)}  \pd \beta a_\ep(u) \partial^{\alpha-\beta} b_\ep (v) \, \dd u \, \dd v. 
\end{aligned}
\end{equation}

Next we note 
\begin{equation*}
(1 - \Delta_{u,v} ) e^{2 i \sigma (z-v, z-u)}
= \eabs{ 2(z-u, z- v)}^2 e^{2 i \sigma (z-v, z-u)}. 
\end{equation*}

If we define the operator
\begin{equation*}
(S f) (u,v) = (1 - \Delta_{u,v} ) \left( \eabs{ 2(z-u, z- v)}^{-2} f (u,v) \right), \quad u,v \in \rr {2d},  
\end{equation*}
acting on $f \in C^\infty (\rr {4d})$, then we obtain for $N \in \no$ 
using integration by parts and dominated convergence
\begin{equation*}
\begin{aligned}
& \lim_{\ep \to 0^+} 
\iint_{\rr {4d}} e^{2 i \sigma (z-v, z-u)}  \pd \beta a_\ep(u) \partial^{\alpha-\beta} b_\ep (v) \, \dd u \, \dd v \\
& = 
\lim_{\ep \to 0^+} 
\iint_{\rr {4d}} e^{2 i \sigma (z-v, z-u)} S^N \left(  \pd \beta a_\ep(u) \partial^{\alpha-\beta} b_\ep (v) \right) \, \dd u \, \dd v \\
& = 
\iint_{\rr {4d}} e^{2 i \sigma (z-v, z-u)} S^N \left(  \pd \beta a(u) \partial^{\alpha-\beta} b (v) \right) \, \dd u \, \dd v 
\end{aligned}
\end{equation*}
since $S^N \left(  \pd \beta a \otimes \partial^{\alpha-\beta} b \right) \in L^1(\rr {4d})$ provided $N$ is sufficiently large.  

We denote $\alpha = (\alpha_1, \alpha_2) \in \nn {2d}$ with $\alpha_1,\alpha_2 \in \nn d$. 
Combining with \eqref{eq:weylproductgen3} and using 
\eqref{eq:lambdaboundisotropic}, \eqref{eq:seminormGms} and Lemma \ref{lem:speetre} we obtain 
\begin{equation*}
\begin{aligned}
& \left| \pd \alpha (a \wpr b)(z) \right| \\
& \lesssim \sum_{\beta \leqs \alpha} \binom{\alpha}{\beta} 
\iint_{\rr {4d}} \left| S^N  \left(  \pd \beta a(u) \partial^{\alpha-\beta} b (v) \right) \right| \dd u \, \dd v \\
& \lesssim \sum_{\beta \leqs \alpha} \binom{\alpha}{\beta} 
\| a \|_{2N + |\beta|} \| b \|_{2N + |\alpha-\beta|}  \\
& \qquad \times \iint_{\rr {4d}} \eabs{ (z-u, z- v) }^{-2N}  \mu_s( u )^{m-|\beta_1| - s |\beta_2|} \mu_s( v)^{n -|\alpha_1-\beta_1| - s |\alpha_2-\beta_2|}  \dd u \, \dd v \\
& \leqs
\| a \|_{2N + |\alpha|} \| b \|_{2N + |\alpha|} \\
& \quad \times \sum_{\beta \leqs \alpha} \binom{\alpha}{\beta}  \iint_{\rr {4d}} \eabs{ (u, v) }^{-2N}  \mu_s( z- u )^{m-|\beta_1| - s |\beta_2|} \mu_s( z-v)^{n -|\alpha_1-\beta_1| - s |\alpha_2-\beta_2|}  \dd u \, \dd v \\
& \lesssim
\| a \|_{2N + |\alpha|} \| b \|_{2N + |\alpha|} \mu_s(z)^{m + n - |\alpha_1| - s |\alpha_2|} \\
& \quad \times \sum_{\beta \leqs \alpha} \binom{\alpha}{\beta}  \iint_{\rr {4d}} \eabs{ (u, v) }^{-2N + (|m| + |n| + 2 |\alpha_1| + 2 s |\alpha_2|) \max \left(1, \frac1s \right)} \dd u \, \dd v \\
& \lesssim
\| a \|_{2N + |\alpha|} \| b \|_{2N + |\alpha|} \mu_s(z)^{m + n - |\alpha_1| - s |\alpha_2|} 
\end{aligned}
\end{equation*}
if $N$ is sufficiently large. This shows that for any $\alpha \in \nn {2d}$ we have 
\begin{equation*}
\sup_{z \in \rr {2d}} \mu_s(z)^{-m - n + |\alpha_1| + s |\alpha_2|} \left| \pd \alpha (a \wpr b)(z) \right| 
\lesssim \| a \|_{2N + |\alpha|} \| b \|_{2N + |\alpha|}  
\end{equation*}
and the claimed continuity follows in view of \eqref{eq:seminormGms}. 
\end{proof}

%%%%%%%%%%%%%%%%%%%
\subsection{$s$-conic cutoff functions}
%%%%%%%%%%%%%%%%%%%

A family of open $s$-conic subsets are defined and denoted as follows. Recall the projection function \eqref{eq:projection} 
$p: \rr {2d} \setminus 0 \to \sr {2d-1}$. 

\begin{defn}\label{def:scone1}
Suppose $s, \ep > 0$ and $z_0 \in \sr {2d-1}$. 
Then
\begin{equation*}
\Gamma_{s, z_0, \ep}
= \{ (x,\xi) \in \rr {2d} \setminus 0, \ | z_0 - p(x,\xi) | < \ep \}
\subseteq  T^* \rr d \setminus 0. 
\end{equation*}
\end{defn}

For simplicity we write $\Gamma_{z_0, \ep} = \Gamma_{s, z_0, \ep}$ when $s$ is fixed and understood from the context. 
If $\ep > 2$ then $\Gamma_{z_0, \ep} = T^* \rr d \setminus 0$ so we usually restrict to $\ep \leqs 2$.

Next we construct cutoff functions $\chi \in G^{0,s}$ such that 
$0 \leqs \chi \leqs 1$,
$\supp \chi \subseteq \Gamma_{z_0, 2\ep} \setminus \rB_{r/2}$, 
$\chi |_{\Gamma_{z_0, \ep} \setminus \overline \rB_{r} } \equiv 1$
for given $\ep , r > 0$, and $z_0 \in \sr {2d-1}$. 
They will be needed in Section \ref{sec:microellipticgabor}. 

\begin{lem}\label{lem:cutoff}
Let $s > 0$. 
If $r > 0$, $0 < \ep \leqs 1$ and $z_0 \in \sr {2d-1}$
then there exists $\chi \in G^{0,s}$ such that 
$0 \leqs \chi \leqs 1$,
$\supp \chi \subseteq \Gamma_{z_0, 2\ep} \setminus \rB_{r/2}$
and $\chi |_{\Gamma_{z_0, \ep} \setminus \overline \rB_{r} } \equiv 1$. 
\end{lem}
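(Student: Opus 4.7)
The plan is to construct $\chi$ as the product of an angular cutoff built from the projection $p$ and an isotropic radial cutoff. For the angular part, fix $\eta \in C^\infty(\rr{2d})$ with $0 \leqs \eta \leqs 1$, $\eta(z) = 1$ when $|z - z_0| \leqs \ep$ and $\eta(z) = 0$ when $|z - z_0| \geqs 3\ep/2$, and set $\Psi(x,\xi) = \eta(p(x,\xi))$ on $\rr{2d} \setminus 0$. The relation $p(\mu x, \mu^s \xi) = p(x,\xi)$ from \eqref{eq:quasihomogen1} makes $p$ anisotropically homogeneous of degree $0$, and a straightforward induction on derivatives gives that $\partial_x^\alpha \partial_\xi^\beta p$ is anisotropically homogeneous of degree $-|\alpha|-s|\beta|$. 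Combined with smoothness of $p$ on $\rr{2d} \setminus 0$ and compactness of $\sr{2d-1}$, this yields
\[
|\partial_x^\alpha \partial_\xi^\beta p(x,\xi)| \lesssim \lambda(x,\xi)^{-|\alpha|-s|\beta|}, \quad (x,\xi) \in \rr{2d}\setminus 0,
\]
and Fa\`a di Bruno lifts the same estimate to $\Psi$ on $\rr{2d} \setminus 0$.

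For the radial part, let $\rho \in C^\infty(\ro)$ satisfy $0 \leqs \rho \leqs 1$, $\rho(t) = 0$ for $t \leqs (r/2)^2$ and $\rho(t) = 1$ for $t \geqs r^2$, and set $\chi_r(x,\xi) = \rho(|(x,\xi)|^2)$. Although $\chi_r$ is defined via the isotropic norm, every derivative of order at least one is supported in the compact annulus $\{(r/2)^2 \leqs |(x,\xi)|^2 \leqs r^2\}$ where $\mu_s$ is bounded above and below by positive constants depending only on $r$ and $s$; hence the estimates $|\partial_x^\alpha \partial_\xi^\beta \chi_r| \lesssim \mu_s^{-|\alpha|-s|\beta|}$ hold trivially and $\chi_r \in G^{0,s}$. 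I then put $\chi = \chi_r \Psi$ with $\chi(0) = 0$. Since $\chi_r$ vanishes on $\rB_{r/2}$, the product is smooth on $\rr{2d}$, and the choices of $\rho$ and $\eta$ immediately give $\chi \equiv 1$ on $\Gamma_{z_0,\ep} \setminus \overline{\rB_r}$ and $\supp \chi \subseteq \{|(x,\xi)| \geqs r/2\} \cap \{|p - z_0| \leqs 3\ep/2\} \subseteq \Gamma_{z_0,2\ep} \setminus \rB_{r/2}$.

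The remaining task is to verify $\chi \in G^{0,s}$ via the Leibniz rule. On the transition annulus all factors and $\mu_s$ itself are bounded, so the weight estimate is trivial. On its complement $\chi_r \equiv 1$ with vanishing derivatives, so only $\partial^\gamma \Psi$ contributes; since $|(x,\xi)| \geqs r/2$ forces $\lambda$ to be bounded below by a positive constant depending on $r$ and $s$, the comparison $\mu_s \asymp 1+\lambda$ from \eqref{eq:lambdaboundnonisotropic} yields $\lambda^{-k} \lesssim \mu_s^{-k}$ there, converting the estimate for $\Psi$ into the required $\mu_s^{-|\alpha|-s|\beta|}$ bound. The main obstacle I expect is the derivative bound for $p$ itself: extracting it requires unpacking the anisotropic homogeneity to transfer $C^\infty$-boundedness on the compact sphere $\sr{2d-1}$ into polynomial decay in $\lambda$ via the scaling factor $\lambda^{-|\alpha|-s|\beta|}$. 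Once that estimate is in place, the rest of the argument is routine product-rule bookkeeping.
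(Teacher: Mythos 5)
Your proposal is correct and follows essentially the same construction as the paper: a cutoff $\eta\circ p$, anisotropically homogeneous of degree zero, multiplied by a radial excision function, with the symbol estimates obtained from the scaling relation $p(\mu x,\mu^s\xi)=p(x,\xi)$, compactness of $\sr{2d-1}$, and the lower bound on $\lambda$ away from the origin. The only cosmetic difference is that you bound the derivatives of $p$ and invoke Fa\`a di Bruno, while the paper differentiates the homogeneity identity for the composed function $\fy\circ p$ directly; both yield the same estimate.
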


\begin{proof}
Let $\fy \in C_c^\infty (\rr {2d})$ satisfy 
$0 \leqs \fy \leqs 1$, 
$\supp \fy \subseteq z_0 + \rB_{2 \ep}$ 
and $\fy |_{z_0 + \rB_{\ep}} \equiv 1$. 
Let $g \in C^\infty(\ro)$ satisfy $0 \leqs g \leqs 1$, $g(x) = 0$ if $x \leqs \frac12$ and $g(x) = 1$ if $x \geqs 1$.  
Set 
\begin{equation}\label{eq:homogen1}
\psi (\lambda x, \lambda^s \xi) = \fy (x,\xi), \quad (x,\xi) \in \sr {2d-1}, \quad \lambda > 0, 
\end{equation}
and 
\begin{equation}\label{eq:cutoff1}
\chi (z) = g ( r^{-1} |z| ) \psi (z), \quad z \in \rr {2d}. 
\end{equation}

Note that \eqref{eq:homogen1} can be written
\begin{equation*}
\psi (x,\xi) = \fy ( p(x,\xi) ), \quad (x,\xi) \in \rr {2d} \setminus 0, 
\end{equation*}
and it follows that $\psi \in C^\infty( \rr {2d} \setminus 0 )$, 
and thus 
$\chi \in C^\infty (\rr {2d})$. 
The properties 
$\chi |_{\Gamma_{z_0, \ep} \setminus \overline \rB_{r} } \equiv 1$ and 
$\supp \chi \subseteq \Gamma_{z_0, 2\ep} \setminus \rB_{r/2}$ follow. 

From \eqref{eq:homogen1} we obtain
\begin{equation}\label{eq:homogen2}
\pdd x \alpha \pdd \xi \beta \fy (x,\xi)
= \lambda^{|\alpha| + s |\beta|} (\pdd x \alpha \pdd \xi \beta \psi) (\lambda x, \lambda^s \xi), 
\quad (x,\xi) \in \sr {2d-1}, \quad \lambda > 0. 
\end{equation}

Let $(y,\eta) \in \rr {2d}$ satisfy $|(y,\eta)| > \frac{r}{2}$. 
Then $(y,\eta) = (\lambda x, \lambda^s \xi)$ for a unique $(x,\xi) \in \sr {2d-1}$ and a unique 
\begin{equation*}
\lambda > \delta := \min \left(  \frac{r}{ 2 }, \left( \frac{r}{ 2} \right)^{\frac1s} \right) > 0.
\end{equation*}
We have 
\begin{equation*}
1 + |y| + |\eta|^{\frac1s} = 1 + \lambda ( |x| + |\xi|^{\frac1s} ) \leqs 2 (1+ \lambda). 
\end{equation*}
Thus we obtain from \eqref{eq:homogen2} for any $\alpha, \beta \in \nn d$
\begin{equation*}
\left| \pdd y \alpha \pdd \eta \beta \psi (y, \eta) \right|
\leqs C_{\alpha,\beta} (1+\lambda)^{-|\alpha| - s |\beta|} 
\lesssim ( 1 + |y| + |\eta|^{\frac1s} )^{-|\alpha| - s |\beta|}. 
\end{equation*}
From \eqref{eq:cutoff1} we may conclude that $\chi \in G^{0,s}$. 
\end{proof}

Sometimes it is useful to have the following alternative to the $s$-conic neighborhoods of Definition \ref{def:scone1}. 

\begin{defn}\label{def:scone2}
Suppose $s, \ep > 0$ and $(x_0, \xi_0) \in \sr {2d-1}$. 
Then
\begin{align*}
\wt \Gamma_{s, (x_0, \xi_0), \ep}
= \wt \Gamma_{ (x_0, \xi_0), \ep}
& = \{ (x,\xi) \in \rr {2d} \setminus 0: \ (x,\xi) = (\lambda (x_0 + y), \lambda^s (\xi_0 + \eta), \ \lambda > 0, \ (y,\eta) \in \rB_\ep \} \\
& = \{ (x,\xi) \in \rr {2d} \setminus 0: \ \exists \lambda > 0 : (\lambda x, \lambda^s \xi) \in (x_0,\xi_0) + \rB_\ep \} 
\subseteq  T^* \rr d \setminus 0. 
\end{align*}
\end{defn}

Again $\wt \Gamma_{ (x_0, \xi_0), \ep}$ is $s$-conic. 

The neighborhoods $\Gamma_{s, (x_0, \xi_0), \ep}$ and $\wt \Gamma_{s, (x_0, \xi_0), \ep}$
are not identical, even if $s = 1$ in which case $p(x,\xi) = (x,\xi)/| (x,\xi) |$. 
But by the following result the $s$-conic neighborhoods of the form $\Gamma_{s,z_0,\ep}$ and $\wt \Gamma_{s, z_0, \ep}$
are equivalent topologically.

\begin{lem}\label{lem:sconesequiv}
Let $z_0 \in \sr {2d-1}$. 
For each $\ep > 0$ there exists $\delta > 0$ such that 
\begin{equation}\label{eq:sconeinclusion1}
\Gamma_{z_0, \delta} \subseteq \widetilde \Gamma_{z_0,\ep}
\end{equation}
and
\begin{equation}\label{eq:sconeinclusion2}
\wt \Gamma_{z_0, \delta} \subseteq \Gamma_{z_0,\ep}. 
\end{equation}
\end{lem}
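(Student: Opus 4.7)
The statement amounts to comparing two parametrizations of $s$-conic neighborhoods: one via the projection $p$ onto $\sr{2d-1}$ along the $s$-curve, the other via membership of some point on the $s$-curve in the Euclidean ball around $z_0$. The key facts are that $p(z_0) = z_0$ for $z_0 \in \sr{2d-1}$ (since $\lambda(z_0) = 1$), that $p$ is constant along every $s$-curve (by \eqref{eq:quasihomogen1} and \eqref{eq:projection}), and that $p$ is smooth, hence continuous, on $\rr{2d}\setminus 0$.

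For the first inclusion \eqref{eq:sconeinclusion1}, I expect $\delta = \ep$ to suffice, and the argument is immediate. Given $(x,\xi) \in \Gamma_{z_0,\ep}$, by definition $|z_0 - p(x,\xi)| < \ep$. Setting $\lambda_0 = \lambda(x,\xi)^{-1} > 0$, one has $(\lambda_0 x, \lambda_0^s \xi) = p(x,\xi)$ by \eqref{eq:projection}, so $(\lambda_0 x, \lambda_0^s \xi) \in z_0 + \rB_\ep$, which is exactly the second characterization in Definition \ref{def:scone2} showing $(x,\xi) \in \widetilde\Gamma_{z_0,\ep}$.

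For the second inclusion \eqref{eq:sconeinclusion2}, I will use continuity of $p$ at $z_0$. Since $p(z_0) = z_0$ and $p \in C^\infty(\rr{2d}\setminus 0)$, there exists $\delta \in (0, 1/2)$ such that $|p(y,\eta) - z_0| < \ep$ whenever $(y,\eta) \in z_0 + \rB_\delta$ (the restriction $\delta < 1/2$ just ensures $0 \notin z_0 + \rB_\delta$, so that $p$ is defined there). Now let $(x,\xi) \in \widetilde\Gamma_{z_0,\delta}$. By Definition \ref{def:scone2} there exists $\lambda > 0$ with $(\lambda x, \lambda^s \xi) \in z_0 + \rB_\delta$. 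Applying the chosen $\delta$ gives $|p(\lambda x, \lambda^s \xi) - z_0| < \ep$. But by \eqref{eq:quasihomogen1} and \eqref{eq:projection}, $p(\lambda x, \lambda^s \xi) = p(x,\xi)$, so $|p(x,\xi) - z_0| < \ep$, i.e.\ $(x,\xi) \in \Gamma_{z_0,\ep}$.

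No real obstacle is anticipated; the whole argument rests on two already-established facts: the explicit formula \eqref{eq:projection} for $p$ (combined with $\lambda(z_0)=1$), and the smoothness of $p$ away from the origin, which was noted just after the definition of the projection. The mild care is only in checking that $0 \notin z_0 + \rB_\delta$ so that one stays in the domain of $p$ when invoking continuity.
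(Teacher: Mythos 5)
Your proof is correct, and for the second inclusion it takes a genuinely different (softer) route than the paper. For \eqref{eq:sconeinclusion1} you and the paper do essentially the same thing: the paper checks points of $\Gamma_{z_0,\ep}$ on the unit sphere (where $p$ is the identity) and uses $s$-conic invariance, while you apply the explicit formula \eqref{eq:projection} directly, exhibiting $\lambda = \lambda(x,\xi)^{-1}$ as the witness in the second characterization of Definition \ref{def:scone2}; both give $\delta = \ep$. For \eqref{eq:sconeinclusion2} the paper argues quantitatively on the sphere: from $|(\mu x,\mu^s\xi) - z_0| < \delta$ it extracts the two-sided bound $(1-\delta)^{\max(1,1/s)} < \mu < (1+\delta)^{\max(1,1/s)}$ and then estimates $|p(x,\xi) - z_0| < \delta + \max(|1-\mu|,|1-\mu^s|)$ by hand, never invoking continuity of $p$. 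You instead use the continuity of $p$ at $z_0$ (available since the paper has already established smoothness of $\lambda$, hence of $p$, via the implicit function theorem) together with the invariance $p(\lambda x,\lambda^s\xi) = p(x,\xi)$, which collapses the whole inclusion to one line. Your version is shorter and avoids the sphere reduction and the $\mu$-bounds entirely; the paper's version is more self-contained at this point and yields an explicit handle on how small $\delta$ must be in terms of $\ep$ and $s$. Your care in shrinking $\delta$ below $1/2$ so that $z_0 + \rB_\delta$ avoids the origin (keeping you in the domain of $p$) is exactly the point that needs checking, and you address it; combining the two inclusions with a single $\delta$ is then just a matter of taking the minimum.
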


\begin{proof}
Let $z_0 = (x_0, \xi_0)$. 
If $\ep > 0$ and $(x,\xi) \in \Gamma_{z_0, \ep} \cap \sr {2d-1}$ then $(x,\xi) \in (x_0,\xi_0) + \rB_\ep$ so $(x,\xi) \in \widetilde \Gamma_{z_0,\ep}$. 
Since both $\Gamma_{z_0, \ep}$ and $\widetilde \Gamma_{z_0, \ep}$ are $s$-conic, this shows 
\begin{equation*}
\Gamma_{z_0, \ep} \subseteq \widetilde \Gamma_{z_0,\ep} 
\end{equation*}
for any $\ep > 0$. Thus \eqref{eq:sconeinclusion1} follows with $\delta = \ep$. 

In order to show \eqref{eq:sconeinclusion2} let $\ep > 0$, 
and suppose $0 < \delta < 1$. 
If $(x,\xi) \in \widetilde \Gamma_{z_0, \delta} \cap \sr {2d-1}$
then there exists $\mu = \mu(x,\xi) > 0$ such that $| (\mu x, \mu^s \xi) - (x_0, \xi_0) | < \delta$. 
We have 
\begin{align*}
\min (\mu, \mu^s) & \leqs |(\mu x, \mu^s \xi)| < 1 + \delta, \\
\max (\mu, \mu^s) & \geqs |(\mu x, \mu^s \xi)| > 1 - \delta
\end{align*}
which gives
\begin{equation*}
(1-\delta)^{\max \left( 1, \frac1s \right)} < \mu (x,\xi) < (1+\delta)^{\max \left( 1, \frac1s \right)} 
\quad \forall (x,\xi) \in \widetilde \Gamma_{z_0, \delta} \cap \sr {2d-1}. 
\end{equation*}
Thus we may pick $\delta < \ep/2$ such that 
\begin{equation*}
\max \left( \left| 1- \mu(x,\xi) \right|, \left| 1 - \mu(x,\xi)^s \right| \right)< \ep/2 
\quad \forall (x,\xi) \in \widetilde \Gamma_{z_0, \delta} \cap \sr {2d-1}. 
\end{equation*}

If $(x,\xi) \in \widetilde \Gamma_{z_0, \delta} \cap \sr {2d-1}$ then
$p(x,\xi) = (x,\xi)$ so we obtain
\begin{align*}
| p(x,\xi) - (x_0,\xi_0) |
& = \left| (\mu (x,\xi) x, \mu(x,\xi)^s \xi)  - (x_0,\xi_0) + \left( ( 1-\mu(x,\xi) )x,(1-\mu(x,\xi)^s ) \xi \right) \right| \\
& < \delta + \max \left( |1-\mu(x,\xi)|,  |1-\mu(x,\xi)^s | \right) < \ep. 
\end{align*}
Again due to $s$-conic property of $\widetilde \Gamma_{z_0, \delta}$ and $\Gamma_{z_0,\ep}$, 
this shows $\widetilde \Gamma_{z_0, \delta} \subseteq \Gamma_{z_0,\ep}$, that is \eqref{eq:sconeinclusion2}. 
\end{proof}

In Example \ref{ex:polynomial} and in Section \ref{sec:microellipticgabor} we will use the following definition which is a natural anisotropic microlocal version of \cite[Definition~25.1]{Shubin1} as well as of \cite[Eq.~(1.11)]{Cappiello0} (cf. \cite{Cappiello4}).

\begin{defn}\label{def:noncharacteristic}
Let $s > 0$, $z_0 \in \rr {2d} \setminus 0$, and $a \in G^{m,s}$. 
Then $z_0$ is called non-characteristic of order $m_1 \leqs m$, $z_0 \notin \charac_{s,m_1} (a)$, if there exists $\ep > 0$ such that, 
with $\Gamma = \Gamma_{s,p(z_0),\ep}$,
\begin{align}
|a( x, \xi )| & \geqs C \mu_s(x,\xi)^{m_1}, \quad (x,\xi) \in \Gamma \quad, \quad |x| + |\xi|^{\frac1s} \geqs R, \label{eq:lowerbound1} \\
|\pdd x \alpha \pdd \xi \beta a(x,\xi)| &\lesssim |a(x,\xi)| \mu_s(x,\xi)^{- |\alpha| - s |\beta|}, \quad \alpha, \beta \in \nn d, \quad (x,\xi) \in \Gamma, \quad |x| + |\xi|^{\frac1s} \geqs R, \label{eq:boundderivative1}
\end{align}
for suitable $C, R > 0$. 
\end{defn}

If $m_1 = m$ we write $\charac_{s,m} (a) = \charac_{s} (a)$, and then the condition \eqref{eq:boundderivative1} is then redundant. 
Note that $\charac_{s,m_1} (a)$ is a closed $s$-conic subset of $T^* \rr d \setminus 0$, 
and $\charac_{s,m_1} (a) \subseteq \charac_{s,m_2} (a)$ if $m_1 \leqs m_2 \leqs m$. 

\begin{example}\label{ex:polynomial}
In \cite{Cappiello0,Cappiello4} polynomial symbols of the form 
\begin{equation}\label{eq:polysymbol1}
a(x,\xi) = \sum_{\frac{|\alpha|}{k} + \frac{|\beta|}{m} \leqs 1} c_{\alpha \beta} x^\alpha \xi^\beta, \quad x, \xi \in \rr d, \quad c_{\alpha \beta} \in \co, 
\end{equation}
are studied for $k, m \in \no$.  
Then $a \in G^{\max(k,m)}$ and $a \in G^{k,\frac{k}{m}}$. 
In fact we have for $(x,\xi) \in \sr {2d-1}$ and $\lambda > 0$
\begin{equation*}
\left( \pdd x \gamma \pdd \xi \kappa a \right)(\lambda x, \lambda^{\frac{k}{m}} \xi) 
= \sum_{\frac{|\alpha|}{k} + \frac{|\beta|}{m} \leqs 1} c_{\alpha \beta \gamma \kappa} 
\lambda^{|\alpha-\gamma| + \frac{k}{m} |\beta-\kappa|} x^{\alpha-\gamma} \xi^{\beta - \kappa}. 
\end{equation*}
If $(y,\eta) \in \rr {2d}$ and $|(y,\eta)| \geqs 1$ then we write $(y, \eta)  = (\lambda x, \lambda^{\frac{k}{m}} \xi)$
for $(x,\xi) \in \sr {2d-1}$ and $\lambda \geqs 1$. 
Since 
\begin{equation*}
|y| + |\eta|^{\frac{m}{k}} = \lambda \left( |x| + |\xi|^{\frac{m}{k}}  \right)
\asymp \lambda
\end{equation*}
we obtain 
\begin{align*}
\left| \pdd x \gamma \pdd \xi \kappa a (y, \eta) \right| 
& \lesssim \sum_{\frac{|\alpha|}{k} + \frac{|\beta|}{m} \leqs 1} 
( 1 + |y| + |\eta|^{\frac{m}{k}} )^{k \left( \frac{|\alpha|}{k} + \frac{|\beta|}{m} \right) - |\gamma| - \frac{k}{m} |\kappa|} \\
& \lesssim ( 1 + |y| + |\eta|^{\frac{m}{k}} )^{k - |\gamma| - \frac{k}{m} |\kappa|} 
\end{align*}
which proves that $a \in G^{k,\frac{k}{m}}$.

In \cite[Eq.~(1.11)]{Cappiello0} the symbol $a$ given by \eqref{eq:polysymbol1} is called $(k,m)$-globally elliptic if 
\begin{equation*}
\left| a(x,\xi)\right|
\geqs C \left( |x| + |\xi|^{\frac{m}{k} } \right)^k, \quad |x| + |\xi|^{\frac{m}{k}} \geqs R 
\end{equation*}
for some $C,R > 0$. 
Thus Definition \ref{def:noncharacteristic} can  be viewed as a microlocalization of $(k,m)$-global ellipticity. 
A $(k,m)$-globally elliptic symbol as above satisfies $\charac_{k/m} (a)  = \charac_{k/m, k} (a) = \emptyset$. 
\end{example}

%%%%%%%%%%%%%%%%%%%%%%%%%%%%%%%%%%%%%%
\section{Anisotropic Gabor wave front sets}\label{sec:anisotropicgaborWF}
%%%%%%%%%%%%%%%%%%%%%%%%%%%%%%%%%%%%%%

The following definition is inspired by H.~Zhu's \cite[Definition~1.5]{Zhu1} of a quasi-homogen-eous 
wave front set defined by two non-negative parameters. 
Zhu uses a semiclassical formulation whereas we use the STFT. 
As far as we know it is an open question to determine if the concepts coincide.

Given positive parameters $t,s > 0$
we define the $t,s$-Gabor wave front set $\WF_{\rm g}^{t,s} ( u ) \subseteq T^* \rr d \setminus 0$ of $u \in \cS'(\rr d)$. 

\begin{defn}\label{def:WFgs}
Suppose $u \in \cS'(\rr d)$, $\fy \in \cS(\rr d) \setminus 0$, and $t,s > 0$. 
A point $z_0 = (x_0,\xi_0) \in T^* \rr d \setminus 0$ satisfies $z_0 \notin \WF_{\rm g}^{t,s} ( u )$
if there exists an open set $U \subseteq T^* \rr d$ such that $z_0 \in U$ and 
\begin{equation}\label{eq:WFgs}
\sup_{(x,\xi) \in U, \ \lambda > 0} \lambda^N |V_\fy u (\lambda^t x, \lambda^s \xi)| < + \infty \quad \forall N \geqs 0. 
\end{equation}
\end{defn}

If $s = t$ we have $\WF_{\rm g}^{t,t} ( u ) = \WFg (u)$ 
which denotes the usual Gabor wave front set \cite{Hormander1,Rodino2}. 
In the definition of $\WF_g^{t,s} ( u )$ only the fraction $s/t$ matters. 
Therefore we may assume in the sequel that $t = 1$, and we write
$\WF_{\rm g}^{1,s} ( u ) = \WF_{\rm g}^s ( u )$ for simplicity. 
We call $\WF_{\rm g}^s ( u )$ the anisotropic $s$-Gabor wave front set. 
It is clear that $\WF_{\rm g}^s ( u )$ is $s$-conic. 

Referring to \eqref{eq:STFTtempered} and \eqref{eq:STFTschwartz} we see
that $\WF_{\rm g}^s ( u )$ records $s$-conic curves 
$0 < \lambda \mapsto (\lambda x, \lambda^s \xi)$ where $V_\fy u$ does not behave like the STFT of a Schwartz function. 
From \eqref{eq:STFTtempered} it also follows that it suffices to check \eqref{eq:WFgs} for $\lambda \geqs L$
where $L > 0$ may be arbitrarily large.

From \eqref{eq:STFTschwartz} it follows that $\WF_{\rm g}^s ( u ) = \emptyset$ if $u \in \cS (\rr d)$. 
Conversely, if $\WF_{\rm g}^s ( u ) = \emptyset$ then 
\begin{equation*}
\sup_{(x,\xi) \in \sr {2d-1}, \ \lambda > 0} \lambda^N |V_\fy u (\lambda x, \lambda^s \xi)| < + \infty \quad \forall N \geqs 0 
\end{equation*}
due to the compactness of the unit sphere $\sr {2d-1}$. 
Given $(y,\eta) \in T^* \rr d \setminus 0$
there is a unique $\lambda > 0$ such that $(y,\eta) = (\lambda x, \lambda^s \xi)$ and $(x,\xi) \in \sr {2d-1}$, 
and $|(y,\eta)|^2 = \lambda^2 |x|^2 + \lambda^{2s} |\xi|^2 \leqs \lambda^2 + \lambda^{2s}$. 
This implies that \eqref{eq:STFTschwartz} is satisfied, and thus $u \in \cS(\rr d)$. 
We have now shown that $\WF_{\rm g}^s ( u ) = \emptyset$ if and only if $u \in \cS(\rr d)$, for any $s > 0$.

%%%%%%%%%%%%%%%%%%%%%%%%%%
\subsection{Window invariance and consequences}
%%%%%%%%%%%%%%%%%%%%%%%%%%

First we show that $\WF_{\rm g}^s(u)$ does not depend on the window function $\fy \in \cS(\rr d) \setminus 0$. 

\begin{prop}\label{prop:windowinvariance}
Let $s > 0$, $u \in \cS'(\rr d)$ and $z_0 \in T^* \rr d \setminus 0$. 
If $\fy \in \cS(\rr d) \setminus 0$ and \eqref{eq:WFgs} holds with $t = 1$ for an open set $U \subseteq T^*\rr d \setminus 0$ containing $z_0$, 
and $\psi \in \cS(\rr d) \setminus 0$, then there exists an open set $V \subseteq U$ such that $z_0 \in V$ and 
\begin{equation}\label{eq:WFgs2}
\sup_{(x,\xi) \in V, \ \lambda > 0} \lambda^N |V_\psi u(\lambda x, \lambda^s \xi)| < \infty, \quad \forall N \geqs 0. 
\end{equation}
\end{prop}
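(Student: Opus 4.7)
The plan is to reduce the $\psi$-STFT to the $\varphi$-STFT by a reproducing formula and then to exploit the rapid decay of $V_\psi \varphi \in \mathscr{S}(\rr{2d})$ together with the hypothesis on $\varphi$. First, from the resolution of identity \eqref{eq:STFTinverse}--\eqref{eq:moyal} and a standard manipulation one obtains, after taking absolute values so that the phase factor disappears, the pointwise estimate
\begin{equation*}
|V_\psi u(z)| \ \lesssim \ \int_{\rr{2d}} |V_\varphi u(w)|\, |V_\psi \varphi(z-w)|\, \dd w, \qquad z \in \rr{2d},
\end{equation*}
where $V_\psi \varphi \in \mathscr{S}(\rr{2d})$. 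This is the only analytic tool needed; the rest is a carefully chosen dyadic/anisotropic cutoff in the integration variable.

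Next I would choose $V \subseteq U$ as a bounded open neighborhood of $z_0$ with the property that there exists $\delta_0 > 0$ such that $(x,\xi) + \rB_{\delta_0} \subseteq U$ for every $(x,\xi) \in V$ (possible because $U$ is open and $z_0 \in U$). Fix $(x_0,\xi_0) \in V$ and $\lambda \geqs 1$, and set $z = (\lambda x_0, \lambda^s \xi_0)$. Choose $\delta \in (0, \min(1,s))$ and split the $w$-integration into
\begin{equation*}
A_\lambda = \{\,(y,\eta) \in \rr{2d} : |y - \lambda x_0| \leqs \lambda^{1-\delta},\ |\eta - \lambda^s \xi_0| \leqs \lambda^{s-\delta}\,\}
\end{equation*}
and its complement.

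On $A_\lambda$, I write $(y,\eta) = (\lambda(x_0 + y'), \lambda^s(\xi_0 + \eta'))$ with $|y'|, |\eta'| \leqs \lambda^{-\delta}$. For $\lambda$ large enough that $\lambda^{-\delta} < \delta_0$, we have $(x_0 + y', \xi_0 + \eta') \in U$, and the hypothesis \eqref{eq:WFgs} gives $|V_\varphi u(y,\eta)| \leqs C_N \lambda^{-N}$ uniformly in $(y,\eta) \in A_\lambda$ (uniformity is fine since $V$ is bounded). The factor $|V_\psi \varphi|$ is integrable, so the $A_\lambda$-part is bounded by $C_N' \lambda^{-N}$ for any $N$. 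On $A_\lambda^c$ we have $|z-w| \geqs c\, \lambda^{\gamma}$ with $\gamma = \min(1-\delta, s-\delta) > 0$; using Schwartz decay of $V_\psi \varphi$ we extract a factor $\lambda^{-\gamma(M-M_0)}$ for arbitrarily large $M$, while the remaining integral
\begin{equation*}
\int_{\rr{2d}} |V_\varphi u(w)|\, (1 + |z-w|)^{-M_0}\, \dd w
\end{equation*}
is controlled by the polynomial STFT bound \eqref{eq:STFTtempered}, Peetre's inequality, and $|z| \lesssim \lambda^{\max(1,s)}$, giving at worst a polynomial growth in $\lambda$. Choosing $M$ large, this part is also $\leqs C_N \lambda^{-N}$. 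Summing, $|V_\psi u(\lambda x_0, \lambda^s \xi_0)| \leqs C_N \lambda^{-N}$, uniformly in $(x_0,\xi_0) \in V$, which proves \eqref{eq:WFgs2}.

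The main obstacle is purely geometric: correctly calibrating the anisotropic cutoff so that (a) the inner region $A_\lambda$ is large enough that each of its points, rescaled by the $s$-homogeneous action, still lands in $U$, and (b) the outer region $A_\lambda^c$ still forces $|z-w|$ to grow like a positive power of $\lambda$. The choice $\delta \in (0, \min(1,s))$ — which in turn forces dividing $|y - \lambda x_0|$ by $\lambda^{1-\delta}$ and $|\eta - \lambda^s \xi_0|$ by $\lambda^{s-\delta}$ — is the only point where anisotropy genuinely intervenes; once it is made, the estimate proceeds exactly as in the isotropic case $s=1$.
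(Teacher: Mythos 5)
Your proposal is correct and follows essentially the same route as the paper's proof: the change-of-window inequality $|V_\psi u| \lesssim |V_\fy u| * |V_\psi \fy|$, followed by a near/far splitting of the convolution integral in which the near part is handled by the hypothesis \eqref{eq:WFgs} after the $s$-homogeneous rescaling and the far part by the Schwartz decay of $V_\psi\fy$ against the polynomial bound \eqref{eq:STFTtempered}. The only difference is cosmetic: you calibrate the inner region as an anisotropic box of sides $\lambda^{1-\delta}$, $\lambda^{s-\delta}$, while the paper uses an isotropic ball of radius proportional to $\ep\lambda^{\min(1,s)}$ in the shift variable; both choices give the same estimates.
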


\begin{proof}
Since $z_0 \in U \subseteq \rr {2d}$ where $U$ is open we may pick an open set $V \subseteq U$ 
such that $z_0 \in V$ and $V + \rB_\ep \subseteq U$ for some $0 < \ep \leqs 1$, and we may assume 
\begin{equation}\label{eq:Vbound}
\sup_{z \in V} |z| \leqs |z_0| + 1 := \mu. 
\end{equation}
By \cite[Lemma~11.3.3]{Grochenig1} we have 
\begin{equation*}
|V_\psi u (z)| \leqs (2 \pi)^{-\frac{d}{2}} \| \fy \|_{L^2}^{-2} \,  |V_\fy u| * |V_\psi \fy | (z), \quad z \in \rr {2d}.
\end{equation*}

Let $\lambda \geqs 1$ and $N \in \no$. We have
\begin{align*}
& \lambda^N |V_\psi u (\lambda x, \lambda^s \xi)| \\
& \lesssim \iint_{\rr {2d}} \lambda^N |V_\fy u ( \lambda (x- \lambda^{-1} y), \lambda^s (\xi - \lambda^{-s} \eta))| \ |V_\psi \fy (y,\eta) | \, \dd y \, \dd \eta \\
& = I_1 + I_2
\end{align*}
where we split the integral into the two terms 
\begin{align*}
I_1 = & \iint_{\rr {2d} \setminus \Omega_\lambda}  \lambda^N  |V_\fy u ( \lambda(x- \lambda^{-1} y), \lambda^s (\xi - \lambda^{-s} \eta))| \ |V_\psi \fy (y,\eta) | \, \dd y \, \dd \eta, \\
I_2 = & \iint_{\Omega_\lambda} \lambda^N |V_\fy u ( \lambda (x- \lambda^{-1} y), \lambda^s (\xi - \lambda^{-s} \eta))| \ |V_\psi \fy (y,\eta) | \, \dd y \, \dd \eta
\end{align*}
where  
\begin{equation*}
\Omega_\lambda = \{(y,\eta) \in \rr {2d}: |(y, \eta)| < 2^{-\frac12} \ep \lambda^{\min (1,s)}  \}. 
\end{equation*}

First we estimate $I_1$ when $(x,\xi) \in V$. 
From 
\eqref{eq:STFTtempered}, \eqref{eq:STFTschwartz} and \eqref{eq:Vbound}
we obtain for some $k \geqs 0$ and any $L \geqs k$
\begin{equation}\label{eq:estimateI1a}
\begin{aligned}
I_1 
& \lesssim \lambda^N \iint_{\rr {2d} \setminus \Omega_\lambda} 
\eabs{(\lambda x - y, \lambda^s \xi - \eta)}^k
  \, |V_\psi \fy (y,\eta) | \, \dd y \, \dd \eta \\
& \lesssim (1+ \mu^2 )^{\frac{k}{2}} \lambda^{N + k \max(1,s) } \iint_{\rr {2d} \setminus \Omega_\lambda} 
\eabs{(y, \eta)}^k  \, |V_\psi \fy (y,\eta) | \, \dd y \, \dd \eta \\
& \lesssim 
(1+ \mu^2 )^{\frac{k}{2}} \lambda^{N + k \max(1,s) }\iint_{\rr {2d} \setminus \Omega_\lambda} \eabs{(y, \eta)}^{k-L-2d-1}\, \dd y \, \dd \eta \\
& \lesssim 
\lambda^{N + k \max(1,s) }
\left( 1 + \frac12 \ep^2 \lambda^{2 \min(1,s)} \right)^{\frac12 \left( k -L\right)}
\iint_{\rr {2d}} \eabs{(y, \eta)}^{-2d-1}\, \dd y \, \dd \eta \\
& \lesssim 
\lambda^{N + k \max(1,s) + \min(1,s) ( k -L )} \\
& \leqs C_{N, L, \mu, \ep}
\end{aligned}
\end{equation}
for any $\lambda \geqs 1$, 
provided we pick $L \geqs k + \min(1,s)^{-1} \left( N + k\max(1,s) \right)$. 
Here $C_{N,L, \mu, \ep} > 0$ is a constant that depends on $N,L, \mu, \ep$ but not on $\lambda > 0$. 
Thus we have obtained the required estimate for $I_1$. 

It remains to estimate $I_2$. 
If $(y,\eta) \in \Omega_\lambda$ then $|y|^2 < \frac12 \ep^2 \lambda^2$ and $|\eta|^2 < \frac12 \ep^2 \lambda^{2s}$ which implies $(\lambda^{-1} y, \lambda^{-s} \eta) \in \rB_\ep$. 
Hence if $(x,\xi) \in V$ then $( x- \lambda^{-1} y, \xi - \lambda^{-s} \eta) \in U$ and we may use the estimate 
\eqref{eq:WFgs} with $t=1$. 
This gives 
\begin{equation}\label{eq:estimateI2a}
\begin{aligned}
I_2 & = \iint_{\Omega_\lambda} \lambda^N |V_\fy u ( \lambda (x- \lambda^{-1} y), \lambda^s (\xi - \lambda^{-s} \eta))| \, |V_\psi \fy (y,\eta) | \, \dd y \, \dd \eta \\
& \leqs C_{N} \iint_{\rr {2d}} | V_\psi \fy (y,\eta) | \, \dd y \, \dd \eta \\
& \lesssim C_{N}
\end{aligned}
\end{equation}
for all $\lambda \geqs 1$. 
Thus we have obtained the required estimate for $I_2$. 
Combining \eqref{eq:estimateI1a} and \eqref{eq:estimateI2a}, 
we have proved \eqref{eq:WFgs2}. 
\end{proof}

If $\check u(x) = u(-x)$ then 
\begin{equation}\label{eq:evensteven0}
V_{\check \psi} \check u(x,\xi)
= V_\psi u(-x,-\xi). 
\end{equation}
Using Proposition \ref{prop:windowinvariance}
it follows that we have the following symmetry:
\begin{equation}\label{eq:evensteven1}
\check u = \pm u \quad \Longrightarrow \quad \WF_{\rm g}^{s} (u) = - \WF_{\rm g}^{s} (u). 
\end{equation}

We also have 
\begin{equation}\label{eq:evensteven2}
V_{\psi} \overline{u}(x,\xi)
= \overline{V_{\overline \psi} u(x,-\xi)}. 
\end{equation}

Referring to \cite[Definition~3.2]{Rodino3} we observe that 
\begin{equation}\label{eq:WFgWFsinclusion}
\WF_{\rm g}^s (u) \subseteq \WF^{1,s} (u), \quad s > 0, \quad u \in \cS'(\rr d), 
\end{equation}
where $\WF^{1,s} (u)$ is a particular case of a $t,s$-Gelfand--Shilov wave front set, 
a concept that requires super-exponential rather than super-polynomial 
decay along curves in phase space.

%%%%%%%%%%%%%%%%%%%%%%%%%%%%%%%%%%%%%
\subsection{Metaplectic properties}
%%%%%%%%%%%%%%%%%%%%%%%%%%%%%%%%%%%%%

The Gabor wave front set is symplectically invariant as (cf. \cite[Proposition~2.2]{Hormander1})
\begin{equation}\label{eq:metaplecticWFg}
\WFg( \mu(\chi) u) = \chi \WFg(u), \quad \chi \in \Sp(d, \ro), \quad u \in \cS'(\rr d).
\end{equation}

When $s \neq 1$ the $s$-Gabor wave front set $\WFgs (u)$ is no longer symplectically invariant. 
Nevertheless, two of the generators of the symplectic group behave invariantly in certain individual senses which we now describe. 
By \cite[Proposition~4.10]{Folland1} each matrix $\chi \in \Sp(d,\ro)$ is a finite product of matrices in $\Sp(d,\ro)$ of the form
\begin{equation*}
\J, \quad 
\left(
  \begin{array}{cc}
  A^{-1} & 0 \\
  0 & A^{T}
  \end{array}
\right), 
\quad
\left(
  \begin{array}{cc}
  I & 0 \\
  B & I
  \end{array}
\right), 
\end{equation*}
for $A \in \GL(d,\ro)$ and $B \in \rr {d \times d}$ symmetric. 
The corresponding metaplectic operators are 
$\mu(\J) = \cF$, 
\begin{equation*}
\mu \left(
  \begin{array}{cc}
  A^{-1} & 0 \\
  0 & A^{T}
  \end{array}
\right) f(x)
= |A|^{\frac12} f(Ax), 
\end{equation*}
if $A \in \GL(d,\ro)$, and 
\begin{equation*}
\mu 
\left(
  \begin{array}{cc}
  I & 0 \\
  B & I
  \end{array}
\right) f(x)
= e^{\frac{i}{2} \la B x, x \ra} f(x), 
\end{equation*}
if $B \in \rr {d \times d}$ is symmetric.

\begin{prop}\label{prop:sGabormetaplectic}
Let $s > 0$ and $u \in \cS'(\rr d)$. 
Then we have 

\begin{enumerate}[\rm(i)]

\item 
\begin{equation*}
\WFgs (\wh u) = \J \WF_{\rm g}^{\frac1s} (u).  
\end{equation*}

\item If $A \in \GL(d,\ro)$ and $u_A (x) = |A|^{\frac12} u(Ax)$ then 
\begin{equation*}
\WFgs (u_A) = 
\left(
  \begin{array}{cc}
  A^{-1} & 0 \\
  0 & A^{T}
  \end{array}
\right) 
\WFgs (u). 
\end{equation*}

\item If $B \in \rr {d \times d}$ is symmetric and $v(x) = e^{\frac{i}{2} \la B x, x \ra} u(x)$ then if $s = 1$
\begin{equation}\label{eq:case3a}
\WFgs (v) = 
\left(
  \begin{array}{cc}
  I & 0 \\
  B & I
  \end{array}
\right) 
\WFgs (u), 
\end{equation}
if $s > 1$ then
\begin{equation}\label{eq:case3b}
\WFgs (v) = \WFgs (u), 
\end{equation}
and finally if $0 < s < 1$ then
\begin{equation}\label{eq:case3c}
(x,\xi) \in \WFgs (u) 
\quad \mbox{for some} \ \xi \in \rr d
\quad \Longrightarrow \quad (x,Bx) \in \WF_g (v). 
\end{equation}
\end{enumerate}

\end{prop}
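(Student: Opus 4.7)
The plan is to handle each of the three generators by deriving an explicit formula expressing the STFT of the transformed distribution in terms of the STFT of $u$ (with a possibly new Schwartz window), and then tracking how the test curve $\lambda\mapsto(\lambda x,\lambda^s\xi)$ behaves on the $u$-side. Window invariance (Proposition \ref{prop:windowinvariance}) frees us to pick whichever Schwartz window is most convenient after the transformation.

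For \textit{(i)}, the intertwining relation $\widehat{M_\xi T_x\varphi}=e^{i\la\xi,x\ra}M_{-x}T_\xi\widehat\varphi$ combined with Parseval yields
\begin{equation*}
V_{\widehat\varphi}\widehat u(x,\xi) = e^{-i\la x,\xi\ra}\,V_\varphi u(-\xi,x).
\end{equation*}
Along $\lambda\mapsto(\lambda x,\lambda^s\xi)$ the $u$-argument becomes $(\mu(-\xi),\mu^{1/s}x)$ with $\mu=\lambda^s$, and superpolynomial decay in $\lambda$ and in $\mu$ are equivalent. Hence $(x,\xi)\notin\WFgs(\widehat u)$ iff $\J^{-1}(x,\xi)=(-\xi,x)\notin\WFg^{1/s}(u)$, which is \textit{(i)}. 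For \textit{(ii)}, the change of variables $z=Ay$ in the defining integral gives
\begin{equation*}
V_\varphi u_A(x,\xi) = |A|^{-1/2}\,V_{\varphi\circ A^{-1}} u(Ax,(A^{-1})^t\xi),
\end{equation*}
with $\varphi\circ A^{-1}\in\cS(\rr d)\setminus 0$. The pulled-back curve $\lambda\mapsto(\lambda Ax,\lambda^s(A^{-1})^t\xi)$ is again a $\WFgs$-curve, now at the point $(Ax,(A^{-1})^t\xi)$, which is precisely the inverse image of $(x,\xi)$ under the symplectic matrix in the statement.

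For \textit{(iii)}, completing the square after $y=x+z$ produces
\begin{equation*}
V_\varphi v(x,\xi) = e^{-\frac{i}{2}\la Bx,x\ra}\,V_{\varphi_B} u(x,\xi-Bx),\qquad \varphi_B(z)=\varphi(z)\,e^{-\frac{i}{2}\la Bz,z\ra}\in\cS(\rr d)\setminus 0
\end{equation*}
(Schwartz since the chirp has modulus one and derivatives of polynomial growth). Along $\lambda\mapsto(\lambda x,\lambda^s\xi)$ the second coordinate becomes $\lambda^s(\xi-\lambda^{1-s}Bx)$. When $s=1$ the correction is $\xi-Bx$, independent of $\lambda$, and \eqref{eq:case3a} follows exactly as in \textit{(ii)}. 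When $s>1$ the correction $\lambda^{1-s}Bx$ vanishes uniformly on bounded sets as $\lambda\to\infty$, so by shrinking the spatial neighborhood and enlarging the cutoff $L$ (permitted by the remark following Definition \ref{def:WFgs}) the $(u,\varphi_B)$-decay on any open set transfers to $(v,\varphi)$-decay on a slightly smaller one, and vice versa, giving \eqref{eq:case3b}. When $0<s<1$ the situation is reversed: writing $\lambda^s\xi-\lambda Bx=\lambda(\lambda^{s-1}\xi-Bx)$ shows that the relevant test on the $v$-side is the isotropic $\WFg$-test at $(x,Bx)$; substituting $\tilde\xi=\lambda^{s-1}\xi$, which tends to $0$ uniformly for $\xi$ in any bounded set, one shows that if $(x_0,Bx_0)\notin\WFg(v)$ then $V_{\varphi_B}u(\lambda x,\lambda^s\xi)$ decays rapidly on a neighborhood of $(x_0,\xi_0)$ for \emph{every} $\xi_0\in\rr d$, which is the contrapositive of \eqref{eq:case3c}.

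The main obstacle is the open-set bookkeeping in the two subcases $s\neq 1$ of \textit{(iii)}, where the change of variables that converts a $v$-curve into a $u$-curve depends on $\lambda$. The mechanism that makes it work is that either $\lambda^{1-s}\to 0$ (if $s>1$) or $\lambda^{s-1}\to 0$ (if $s<1$) uniformly on bounded sets, combined with the freedom to restrict $\lambda\geqs L$ with $L$ arbitrarily large, so that after a controlled shrinking of the spatial neighborhood the perturbed curves still lie in the original open set on which rapid decay is assumed.
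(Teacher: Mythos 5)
Your proposal is correct and follows essentially the same route as the paper: in both cases the proof rests on the STFT intertwining relations for the three generators $\J$, dilation, and chirp multiplication, followed by tracking how the curves $\lambda\mapsto(\lambda x,\lambda^s\xi)$ transform, with exactly the paper's case analysis and the $\lambda^{1\mp s}\to 0$ mechanism (plus the $\lambda\geqs L$ reduction) in part (iii). The only cosmetic differences are that you derive the three identities by direct computation instead of specializing the metaplectic covariance formula \eqref{eq:STFTmetaplectic1}, and you prove the $s=1$ case of (iii) directly rather than citing \eqref{eq:metaplecticWFg}; also, in the $s>1$ case the neighborhood that must be enlarged/shrunk is the frequency one (the perturbation is $\xi-\lambda^{1-s}Bx$), a harmless bookkeeping slip.
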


\begin{proof}
Let $\fy \in \cS(\rr d) \setminus 0$.  
We have from the proof of \cite[Corollary~4.5]{Carypis1}
\begin{equation}\label{eq:STFTmetaplectic1}
|V_{\mu (\chi) \fy} (\mu(\chi) u)( \chi(x,\xi))| = |V_{\fy} u( x, \xi)|
\end{equation}
for all $\chi \in \Sp(d, \ro)$. 

\begin{enumerate}[\rm(i)]

\item
If $\chi = \J$ we obtain
\begin{equation*}
|V_{\wh \fy} \wh u(\J (x,\xi))| = |V_{\wh \fy} \wh u(\xi, - x)| = |V_{\fy} u( x,\xi) |. 
\end{equation*}
From this and Proposition \ref{prop:windowinvariance} it follows that $(x,\xi) \notin \WF_{\rm g}^{\frac1s} (u)$ if and only if $\J(x,\xi) \notin \WFgs (\wh u)$ which proves claim (i). 

\item
Next we insert $u_A$ for $A \in \GL(d,\ro)$ into \eqref{eq:STFTmetaplectic1} which gives
\begin{equation*}
|V_{\psi_A} u_A ( A^{-1}x, A^T \xi )| = |V_\psi u( x,\xi) |. 
\end{equation*}
Note that $\psi_A \in \cS(\rr d) \setminus 0$. 
We obtain $(x,\xi) \notin \WFgs (u)$ if and only if $(A^{-1}x, A^T \xi) \notin \WFgs (u_A)$ which shows claim (ii). 

\item
When $s=1$ \eqref{eq:case3a} is a particular case of \eqref{eq:metaplecticWFg}. 

Suppose $s \neq 1$. 
With $\psi(x) = e^{\frac{i}{2} \la B x, x \ra} \fy(x) \in \cS(\rr d) \setminus 0$
we obtain from \eqref{eq:STFTmetaplectic1}
\begin{equation*}
|V_{\fy} u( x, \xi)| = |V_{\psi} v(  x, B x + \xi) |
\end{equation*}
or equivalently 
\begin{equation*}
|V_{\fy} u( x, - B x + \xi)| = |V_{\psi} v(  x, \xi) |. 
\end{equation*}
If $\lambda > 0$ then 
\begin{equation}\label{eq:STFTmetaplectic2}
|V_{\fy} u( \lambda x, \lambda^s \xi)|
= |V_{\psi} v( \lambda x, \lambda^s ( \lambda^{1-s} B x + \xi))| 
= |V_{\psi} v( \lambda x, \lambda (B x + \lambda^{s-1} \xi ))|
\end{equation}
and 
\begin{equation}\label{eq:STFTmetaplectic3}
|V_{\psi} v( \lambda x, \lambda^s \xi)| 
= |V_{\fy} u( \lambda x, \lambda^s (- \lambda^{1-s} B x + \xi))| 
= |V_{\fy} u( \lambda x, \lambda (- B x + \lambda^{s-1} \xi))|. 
\end{equation}

Suppose $s > 1$ and $0 \neq (x_0, \xi_0) \notin \WFgs(u)$. 
Then for some $\ep > 0$ we have 
\begin{equation}\label{eq:WFgsnot1}
\sup_{x \in x_0 + \rB_\ep, \ \xi \in \xi_0 + \rB_{2\ep}, \ \lambda > 0} \lambda^N |V_{\fy} u( \lambda x, \lambda^s \xi)| < +\infty \quad \forall N \geqs 0. 
\end{equation}

We have $\lambda^{1-s} |B x| < \ep$ when $x \in x_0 + \rB_\ep$
if $\lambda \geqs L$ for $L \geqs 1$ sufficiently large.  
Thus $\xi - \lambda^{1-s} B x \in \xi_0 + \rB_{2 \ep}$ if $\xi \in \xi_0 + \rB_\ep$ and $\lambda \geqs L$. 
From \eqref{eq:STFTmetaplectic3} and \eqref{eq:WFgsnot1} we obtain
\begin{equation*}
\sup_{x \in x_0 + \rB_\ep, \ \xi \in \xi_0 + \rB_\ep, \ \lambda > 0} \lambda^N 
|V_{\psi} v( \lambda x, \lambda^s \xi)| < +\infty \quad \forall N \geqs 0
\end{equation*}
which shows that $(x_0, \xi_0) \notin \WFgs(v)$. 
Thus $\WFgs(v) \subseteq \WFgs(u)$. 
Likewise one shows the opposite inclusion using \eqref{eq:STFTmetaplectic2}. 
We have now proved \eqref{eq:case3b}. 

Suppose $0 < s < 1$ and $0 \neq (x_0, B x_0) \notin \WFg(v)$. 
Then for some $\ep > 0$ we have
\begin{equation}\label{eq:WFgsnot2}
\sup_{x \in x_0 + \rB_\ep, \ \xi \in B x_0 + \rB_{2 |B| \ep}, \ \lambda > 0} \lambda^N |V_{\psi} v( \lambda x, \lambda \xi)| < +\infty \quad \forall N \geqs 0. 
\end{equation}

Let $\eta_0 \in \rr d$.
We have $B x + \lambda^{s-1} \xi \in B x_0 + \rB_{2 |B| \ep}$ when $x \in x_0 + \rB_\ep$ and $\xi \in \eta_0 + \rB_\ep$
if $\lambda \geqs L$ for $L \geqs 1$ sufficiently large. 
From \eqref{eq:STFTmetaplectic2} we obtain 
\begin{equation*}
\sup_{x \in x_0 + \rB_\ep, \ \xi \in \eta_0 + \rB_\ep, \ \lambda > 0} \lambda^N 
|V_{\fy} u( \lambda x, \lambda^s \xi )| < +\infty \quad \forall N \geqs 0
\end{equation*}
and it follows that $(x_0, \eta_0) \notin \WFgs(u)$. 
We have shown \eqref{eq:case3c}. 
\end{enumerate}
\end{proof}

%%%%%%%%%%%%%%%%%%%%%%%%%%%%%%%%%%%%%%%%%%
\section{Microlocality for anisotropic Gabor wave front sets}\label{sec:microlocalgabor}
%%%%%%%%%%%%%%%%%%%%%%%%%%%%%%%%%%%%%%%%%%

Let $m \in \ro$, $0 \leqs \rho \leqs 1$, $a \in G_\rho^m$ and $\fy \in \cS(\rr {2d}) \setminus 0$. 
According to \cite[Proposition~3.2]{Cappiello3} the estimates 
\begin{equation}\label{eq:STFTvillkor1}
|V_\fy a(z,\zeta)| \lesssim \eabs{z}^m  \eabs{\zeta}^{-L}, \quad (z,\zeta) \in T^* \rr {2d}, 
\end{equation}
hold for any $L \geqs 0$. 
Note that the case $\rho = 0$ is included, so \eqref{eq:STFTvillkor1} is valid under the assumption
\begin{equation*}
\left| 
\pdd x \alpha \pdd \xi \beta a(x,\xi)
\right|
\lesssim \eabs{(x,\xi)}^m, \quad \alpha, \beta \in \nn d. 
\end{equation*}

The next result concerns microlocality with respect to the $s$-Gabor wave front set
for pseudodifferential operators in the isotropic Shubin calculus. 
Due to \eqref{eq:Gmsinclusion} the result is also true for the anisotropic Shubin symbols $G^{m,s}$. 

\begin{prop}\label{prop:microlocal}
Let $s > 0$, $m \in \ro$ and $0 \leqs \rho \leqs 1$. 
If $u \in \cS'(\rr d)$ and $a \in G_\rho^m$ then 
\begin{equation}\label{eq:microlocal1}
\WFgs ( a^w(x,D) u) \subseteq \WFgs ( u).  
\end{equation}
\end{prop}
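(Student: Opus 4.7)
The plan is to represent the STFT of $a^w(x,D)u$ as an integral against the STFT of $u$ with a rapidly decaying kernel, and then split the integration into a near region (where the hypothesis on $u$ yields rapid decay) and a far region (where the kernel decay suffices). Fix $\psi \in \cS(\rr d)$ with $\|\psi\|_{L^2} = 1$; by Proposition~\ref{prop:windowinvariance} we may use this window throughout. Applying \eqref{eq:STFTinverse} weakly and the continuity of $a^w(x,D)$ on $\cS'(\rr d)$, one obtains
\begin{equation*}
V_\psi\bigl(a^w(x,D)u\bigr)(z)
= \int_{\rr{2d}} K(z,w)\, V_\psi u(w) \, \dd w,
\qquad K(z,w) = (2\pi)^{-d}\bigl(a^w(x,D)\Pi(w)\psi,\Pi(z)\psi\bigr).
\end{equation*}
By \eqref{eq:wignerweyl} this rewrites $K$ as a pairing of $a$ with $W(\Pi(z)\psi,\Pi(w)\psi)$, and a direct computation shows the latter equals a unimodular phase (depending linearly on $z-w$) times the translate of $W(\psi,\psi) \in \cS(\rr{2d})$ by $(z+w)/2$. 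Interpreting the pairing as an STFT of $a$ with Schwartz window $W(\psi,\psi)$ at position $(z+w)/2$ and frequency of modulus $|z-w|$, the estimate \eqref{eq:STFTvillkor1} for $a \in G_\rho^m$ yields
\begin{equation*}
|K(z,w)| \leqs C_N\, \eabs{(z+w)/2}^m \eabs{z-w}^{-2N}, \qquad z, w \in \rr{2d}, \ N \in \no.
\end{equation*}

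Let $z_0 \notin \WFgs(u)$ with open witness $U$ satisfying \eqref{eq:WFgs}, and choose an open $V$ with $z_0 \in V$, $\overline V \subseteq U$, together with $\delta \in (0,1)$ small enough that every $(y,\eta)$ at coordinate-wise distance at most $\delta$ from some point of $V$ still lies in $U$. For $\lambda \geqs 1$, $(x,\xi) \in V$ and $z = (\lambda x, \lambda^s \xi)$, split $\rr{2d} = \Omega_1 \cup \Omega_2$ with $\Omega_1 = \{ w : |w - z| < \delta \lambda^{\min(1,s)} \}$. The key observation is that if $w = z + h \in \Omega_1$ with $h = (h_1,h_2)$, then $w = (\lambda y, \lambda^s \eta)$ with $y = x + h_1/\lambda$, $\eta = \xi + h_2/\lambda^s$, and since $|h_1|/\lambda \leqs \delta \lambda^{\min(1,s)-1} \leqs \delta$ and $|h_2|/\lambda^s \leqs \delta \lambda^{\min(1,s)-s} \leqs \delta$ for $\lambda \geqs 1$, the point $(y,\eta)$ lies in $U$. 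Consequently \eqref{eq:WFgs} yields $|V_\psi u(w)| \lesssim_{N'} \lambda^{-N'}$ on $\Omega_1$ for any $N' \geqs 0$, while \eqref{eq:STFTtempered} yields $|V_\psi u(w)| \lesssim \eabs{w}^k$ on $\Omega_2$ for some fixed $k$.

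Combining: on $\Omega_1$ the kernel bound $|K(z,w)| \lesssim \lambda^{|m|\max(1,s)}$ together with the polynomial $\lambda$-volume of $\Omega_1$ give an overall bound $O(\lambda^{-N' + C_0})$, which is $O(\lambda^{-N})$ for $N'$ large. On $\Omega_2$ we extract a factor $\eabs{z-w}^{-N/\min(1,s)} \lesssim \lambda^{-N}$ from the kernel decay using $|z-w| \geqs \delta \lambda^{\min(1,s)}$, and control the residual weight $\eabs{z-w}^{-(2N - N/\min(1,s))} \eabs{(z+w)/2}^m \eabs{w}^k$ by Peetre's inequality (applied to $\eabs{w}^k$ and to $\eabs{(z+w)/2}^m = \eabs{z - (z-w)/2}^m$), yielding an absolutely convergent integral and a polynomial-in-$\lambda$ factor absorbable by redefining $N$. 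These two estimates produce $\sup_{(x,\xi) \in V,\ \lambda \geqs 1} \lambda^N |V_\psi(a^w(x,D)u)(\lambda x, \lambda^s \xi)| < \infty$ for every $N$, whence $z_0 \notin \WFgs(a^w(x,D)u)$. The main obstacle is selecting the anisotropic scaling of the radius of $\Omega_1$: the exponent $\min(1,s)$ is the unique choice that simultaneously keeps $\Omega_1$ within the anisotropic cone over $U$ and makes the rapid kernel decay on $\Omega_2$ yield the requisite $\lambda^{-N}$ factor, thereby reconciling the asymmetric scaling of position and frequency in the $s$-anisotropic setting.
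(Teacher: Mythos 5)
Your proposal is correct and follows essentially the same route as the paper's proof: the STFT of $a^w(x,D)u$ is written as an integral of $V_\psi u$ against the matrix-coefficient kernel $(a^w(x,D)\Pi(w)\psi,\Pi(z)\psi)$, which is recognized as an STFT of $a$ with window $W(\psi,\psi)$ and estimated via \eqref{eq:STFTvillkor1}, followed by the same anisotropic near/far splitting at radius $\sim\lambda^{\min(1,s)}$ around $(\lambda x,\lambda^s\xi)$. The only cosmetic difference is that on the near region you bound by the polynomial volume of the ball times the sup of the kernel, whereas the paper keeps the kernel's decay in $z-w$ to get an integrable majorant; both yield the required $O(\lambda^{-N})$ bound.
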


\begin{proof}
Pick $\fy \in \cS(\rr d)$ such that $\| \fy \|_{L^2}=1$.
Denoting the formal adjoint of $a^w(x,D)$ by $a^w(x,D)^*$, \eqref{eq:moyal} gives for $u \in \cS' (\rr d)$ and $z \in \rr {2d}$
\begin{align*}
(2 \pi)^{\frac{d}{2}}  V_\varphi (a^w(x,D) u) (z)
& = ( a^w(x,D) u, \Pi(z) \varphi ) \\
& = ( u, a^w(x,D)^* \Pi(z) \varphi ) \\
& = \int_{\rr {2d}} V_\varphi u(w) \, ( \Pi(w) \varphi,a^w(x,D)^* \Pi(z) \varphi ) \, \dd w \\
& = \int_{\rr {2d}} V_\varphi u(w) \, ( a^w(x,D) \, \Pi(w) \varphi,\Pi(z) \varphi ) \, \dd w \\
& = \int_{\rr {2d}} V_\varphi u(z-w) \, ( a^w(x,D) \, \Pi(z-w) \varphi,\Pi(z) \varphi ) \, \dd w.
\end{align*}
By e.g. \cite[Lemma 3.1]{Grochenig2}, or a computation using \eqref{eq:wignerweyl}, we have 
\begin{equation*}
|( a^w(x,D) \, \Pi(z-w) \varphi,\Pi(z) \varphi )|
= \left| V_\Phi a \left( z-\frac{w}{2}, \J w \right) \right|
\end{equation*}
where $\Phi$ is the Wigner distribution $\Phi = W(\fy,\fy) \in \cS(\rr {2d})$. 

Combining the preceding identities we deduce
\begin{equation}\label{eq:STFTWeylop}
|V_\varphi (a^w(x,D) u) (z)|
\lesssim \int_{\rr {2d}}  |V_\varphi u(z-w)| \, \left| V_\Phi a \left( z-\frac{w}{2}, \J w \right) \right| \, \dd w.
\end{equation}

Suppose $0 \neq z_0 \notin \WF_{\rm g}^s(u)$. 
Then there exists an open set $U$ such that $z_0 \in U$ and 
\eqref{eq:WFgs} holds with $t = 1$. 
We pick an open set $V$ such that 
$z_0 \in V$ and $V + \rB_\ep \subseteq U$ for some $0 < \ep \leqs 1$, and we may assume 
that \eqref{eq:Vbound} holds. 

Let $\lambda \geqs 1$ and $N \in \no$. We have
\begin{align*}
& \lambda^N |V_\fy (a^w(x,D) u)  (\lambda x, \lambda^s \xi)| \\
& \lesssim \iint_{\rr {2d}} \lambda^N |V_\fy u ( \lambda (x- \lambda^{-1} y), \lambda^s (\xi - \lambda^{-s} \eta))| \, \left| V_\Phi a \left( \lambda x-\frac{y}{2}, \lambda^s \xi-\frac{\eta}{2},  \eta, -y \right) \right| \, \dd y \, \dd \eta \\
& = I_1 + I_2
\end{align*}
where the integral is decomposed into the two terms 
\begin{align*}
I_1 = & \iint_{\rr {2d} \setminus \Omega_\lambda} \lambda^N |V_\fy u ( \lambda (x- \lambda^{-1} y), \lambda^s (\xi - \lambda^{-s} \eta))| \, \left| V_\Phi a \left( \lambda x-\frac{y}{2}, \lambda^s \xi-\frac{\eta}{2},  \eta, -y \right) \right| \, \dd y \, \dd \eta, \\
I_2 = & \iint_{\Omega_\lambda} \lambda^N |V_\fy u ( \lambda (x- \lambda^{-1} y), \lambda^s (\xi - \lambda^{-s} \eta))| \, \left| V_\Phi a \left( \lambda x-\frac{y}{2}, \lambda^s \xi-\frac{\eta}{2},  \eta, -y \right) \right| \, \dd y \, \dd \eta
\end{align*}
where  
\begin{equation*}
\Omega_\lambda = \{(y,\eta) \in \rr {2d}: |(y, \eta)| < 2^{-\frac12} \ep \lambda^{\min (1,s)}  \}. 
\end{equation*}

First we estimate $I_1$ when $(x,\xi) \in V$. 
From 
\eqref{eq:STFTtempered}, \eqref{eq:Vbound} and \eqref{eq:STFTvillkor1} 
we obtain for some $k \geqs 0$ and any $L \geqs k + |m|$
\begin{equation}\label{eq:estimateI1b}
\begin{aligned}
I_1 
& \lesssim \lambda^N \iint_{\rr {2d} \setminus \Omega_\lambda} 
\eabs{(\lambda x - y, \lambda^s \xi - \eta)}^k
  \, \left| V_\Phi a \left( \lambda x-\frac{y}{2}, \lambda^s \xi-\frac{\eta}{2},  \eta, -y \right) \right| \, \dd y \, \dd \eta \\
& \lesssim (1+ \mu^2 )^{\frac{k}{2}} \lambda^{N + k \max(1,s) } \iint_{\rr {2d} \setminus \Omega_\lambda} 
\eabs{(y, \eta)}^k  \, \left| V_\Phi a \left( \lambda x-\frac{y}{2}, \lambda^s \xi-\frac{\eta}{2},  \eta, -y \right) \right| \, \dd y \, \dd \eta \\
& \lesssim 
(1+ \mu^2 )^{\frac{k+|m|}{2}} \lambda^{N + (k+|m|) \max(1,s) }\iint_{\rr {2d} \setminus \Omega_\lambda} \eabs{(y, \eta)}^{k+|m| -L - 2 d -1 }\, \dd y \, \dd \eta \\
& \lesssim 
\lambda^{N + (k + |m|) \max(1,s) }\iint_{\rr {2d} \setminus \Omega_\lambda} \eabs{(y, \eta)}^{k+|m| -L} \, \eabs{(y, \eta)}^{-2d-1}\, \dd y \, \dd \eta \\
& \leqs
\lambda^{N + (k + |m|) \max(1,s) }
\left( 1 + \frac12 \ep^2 \lambda^{2 \min(1,s)} \right)^{\frac12 \left( k + |m| -L\right)}
\iint_{\rr {2d}} \eabs{(y, \eta)}^{-2d-1}\, \dd y \, \dd \eta \\
& \lesssim 
\lambda^{N + (k+|m|) \max(1,s) + \min(1,s) ( k + |m| -L )} \\
& \leqs C_{N, L, a, \mu, \ep}
\end{aligned}
\end{equation}
for any $\lambda \geqs 1$, 
provided we pick $L \geqs k + |m|  + \min(1,s)^{-1} \left( N + (k+|m|) \max(1,s) \right)$. 
Here $C_{N,L,a, \mu, \ep} > 0$ is a constant that depends on $N,L,a, \mu, \ep$ but not on $\lambda > 0$. 
Thus we have obtained the required estimate for $I_1$. 

It remains to estimate $I_2$. 
If $(y,\eta) \in \Omega_\lambda$ then $|y|^2 < \frac12 \ep^2 \lambda^2$ and $|\eta|^2 < \frac12 \ep^2 \lambda^{2s}$ which implies $(\lambda^{-1} y, \lambda^{-s} \eta) \in \rB_\ep$. 
Hence if $(x,\xi) \in V$ then $( x- \lambda^{-1} y, \xi - \lambda^{-s} \eta) \in U$ and we may use the estimate 
\eqref{eq:WFgs} with $t=1$. 

This gives for any $L \geqs 0$ and a constant $C_{N,s,m} > 0$, using 
\eqref{eq:STFTvillkor1} and \eqref{eq:Vbound}
\begin{equation}\label{eq:estimateI2b}
\begin{aligned}
I_2 & = \iint_{\Omega_\lambda} \lambda^N |V_\fy u ( \lambda (x- \lambda^{-1} y), \lambda^s (\xi - \lambda^{-s} \eta))| \, \left| V_\Phi a \left( \lambda x-\frac{y}{2}, \lambda^s \xi-\frac{\eta}{2},  \eta, -y \right) \right| \, \dd y \, \dd \eta \\
& = \lambda^{- |m| \max(1,s)} \iint_{\Omega_\lambda} \lambda^{N + |m| \max(1,s)} |V_\fy u ( \lambda (x- \lambda^{-1} y), \lambda^s (\xi - \lambda^{-s} \eta))| \\
& \qquad \qquad \qquad \qquad \qquad \qquad \qquad \qquad \qquad 
\times \left| V_\Phi a \left( \lambda x-\frac{y}{2}, \lambda^s \xi-\frac{\eta}{2},  \eta, -y \right) \right| \, \dd y \, \dd \eta \\
& \leqs C_{N,s,m} \lambda^{- |m| \max(1,s)} \iint_{\Omega_\lambda} \left| V_\Phi a \left( \lambda x-\frac{y}{2}, \lambda^s \xi-\frac{\eta}{2},  \eta, -y \right) \right| \, \dd y \, \dd \eta \\
& \leqs C_{N,s, m} \lambda^{- |m| \max(1,s) + |m| \max(1,s)} \iint_{\rr {2d}} \eabs{(y,\eta)}^{|m|-L} \, \dd y \, \dd \eta \\
& \lesssim C_{N,s,m}
\end{aligned}
\end{equation}
provided $L > |m| + 2 d$, 
for all $\lambda \geqs 1$. 
Thus we have obtained the required estimate for $I_2$. 
Combining \eqref{eq:estimateI1b} and \eqref{eq:estimateI2b}, 
referring to Definition \ref{def:WFgs}, 
we may conclude that $z_0 \notin \WF_{\rm g}^{s}( a^w(x,D) u )$
and hence we have proved \eqref{eq:microlocal1}. 
\end{proof}

A consequence of Proposition \ref{prop:microlocal} is the invariance of the anisotropic Gabor wave front set under translations and modulations, 
a k a time-frequency shifts \cite{Grochenig1}. 

\begin{cor}\label{cor:translationmodulationinvar}
Suppose $s > 0$. 
For any $z \in \rr {2d}$ and any $u \in \cS'(\rr d)$ we have
\begin{equation*}
\WFgs ( \Pi(z) u ) = \WFgs (u). 
\end{equation*}
\end{cor}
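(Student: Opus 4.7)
The plan is to exhibit $\Pi(z)$ as a Weyl pseudodifferential operator whose symbol lies in an isotropic Shubin class, and then invoke Proposition \ref{prop:microlocal}.

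First I would compute the Weyl symbol of $\Pi(z_0) = M_{\xi_0} T_{x_0}$ for $z_0 = (x_0, \xi_0) \in \rr{2d}$. A direct calculation using the oscillatory integral \eqref{eq:tquantization} with $t = 1/2$ shows that if we set
\begin{equation*}
a_{z_0}(x,\xi) = e^{i \left( \langle x, \xi_0 \rangle - \langle x_0, \xi \rangle + \frac12 \langle x_0, \xi_0 \rangle \right)},
\end{equation*}
then $a_{z_0}^w(x,D) = \Pi(z_0)$. Every partial derivative $\pdd x \alpha \pdd \xi \beta a_{z_0}$ equals $(i\xi_0)^\alpha (-ix_0)^\beta a_{z_0}$ and is therefore uniformly bounded on $\rr{2d}$. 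Consequently $a_{z_0} \in G_0^0$, that is, $a_{z_0}$ satisfies \eqref{eq:shubinineq} with $m = 0$ and $\rho = 0$. This is the only delicate point: the symbol does not decay at infinity, so one really needs the endpoint $\rho = 0$ of Proposition \ref{prop:microlocal}.

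Applying Proposition \ref{prop:microlocal} with $\rho = 0$ and $m = 0$ yields
\begin{equation*}
\WFgs(\Pi(z_0) u) = \WFgs(a_{z_0}^w(x,D) u) \subseteq \WFgs(u).
\end{equation*}
For the reverse inclusion, apply the same reasoning to $\Pi(-z_0)$ acting on $\Pi(z_0) u$: since $a_{-z_0} \in G_0^0$ as well, Proposition \ref{prop:microlocal} gives
\begin{equation*}
\WFgs(\Pi(-z_0) \Pi(z_0) u) \subseteq \WFgs(\Pi(z_0) u).
\end{equation*}
A short computation shows $\Pi(-z_0) \Pi(z_0) = c \cdot \mathrm{Id}$ for some unimodular constant $c \in \co$, and $\WFgs(cu) = \WFgs(u)$ is immediate from Definition \ref{def:WFgs} since multiplication of $u$ by a nonzero constant only rescales $|V_\fy u|$ by the same constant. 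Therefore $\WFgs(u) \subseteq \WFgs(\Pi(z_0) u)$, and combining both inclusions gives the claimed equality.
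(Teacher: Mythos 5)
Your proof is correct and follows essentially the same route as the paper: you identify the same Weyl symbol $a_{z_0} \in G_0^0$ of $\Pi(z_0)$, apply Proposition \ref{prop:microlocal} at the endpoint $\rho = 0$ for one inclusion, and obtain the reverse inclusion by composing with $\Pi(-z_0)$, which is exactly the paper's argument (the paper writes $u = e^{-i\la x,\xi\ra}\Pi(-(x,\xi))\Pi(x,\xi)u$ rather than isolating the unimodular constant, but this is the same step).
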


\begin{proof}
Let $z = (x,\xi) \in \rr {2d}$. 
By a calculation it is verified that $\Pi(x,\xi) = a_{x,\xi}^w(x,D)$ where
\begin{equation*}
a_{x,\xi} (y,\eta) = e^{ \frac{i}{2} \la x, \xi \ra + i \left( \la y, \xi \ra - \la x, \eta \ra \right)}, \quad (y,\eta) \in \rr {2d}. 
\end{equation*}
For any $\alpha, \beta \in \nn d$ we have
\begin{equation*}
\left| \partial_y^\alpha \partial_\eta^\beta a_{x,\xi} (y,\eta) \right|
= |\xi^\alpha x^\beta| := C_{\alpha, \beta}
\end{equation*}
where we may consider $|\xi^\alpha x^\beta| \geqs 0$ as a constant as a function of $(y,\eta) \in \rr {2d}$.
This implies that $a_{x,\xi} \in G_0^0$. 
Thus we may apply Proposition \ref{prop:microlocal} which gives 
\begin{equation*}
\WFgs ( \Pi(z) u ) \subseteq \WFgs (u). 
\end{equation*}
The opposite inclusion follows from $u = e^{- i \la x, \xi \ra} \Pi(-(x,\xi)) \Pi(x,\xi) u$. 
\end{proof}

We finish this section with the anisotropic Gabor wave front sets for a few important tempered distributions. 

\begin{prop}\label{prop:WFstelementary}
If $s > 0$ then:

\begin{enumerate}[\rm (i)]

\item for any $x \in \rr d$ and any $\alpha \in \nn d$
\begin{equation}\label{eq:diracWFs}
\WFgs ( D^\alpha \delta_x ) =  \{ 0 \} \times ( \rr d \setminus 0 ); 
\end{equation}
\item for any $\alpha \in \nn d$
\begin{equation*}
\WFgs ( x^\alpha ) =  ( \rr d \setminus 0 ) \times \{ 0 \}; 
\end{equation*}
\item for any $\xi \in \rr d$
\begin{equation*}
\WFgs (e^{i \la \cdot, \xi \ra} ) =  ( \rr d \setminus 0 ) \times \{ 0 \}. 
\end{equation*}
\end{enumerate}

\end{prop}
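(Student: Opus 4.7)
\medskip

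The plan is to reduce all three parts to a direct computation of the short-time Fourier transform of $\delta_x$ (and its derivatives), then use Fourier symmetry and modulation invariance to obtain parts (ii) and (iii).

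For part (i), I would compute the STFT explicitly. Since $(\delta_x, M_\eta T_y\fy) = \overline{M_\eta T_y\fy(x)} = e^{-i\la x,\eta\ra}\overline{\fy(x-y)}$ and $D^\alpha$ is self-adjoint under the conjugate linear pairing, differentiating the modulation-translation factor by the Leibniz rule gives
\begin{equation*}
V_\fy(D^\alpha\delta_x)(y,\eta) = c\, e^{-i\la x,\eta\ra}\sum_{\beta\leqs\alpha}\binom{\alpha}{\beta}(-i\eta)^\beta\,\overline{\partial^{\alpha-\beta}\fy(x-y)}
\end{equation*}
for some constant $c$. On the curve $\lambda\mapsto(\lambda y,\lambda^s\eta)$ this produces a finite sum whose $(y,\eta)$-dependent coefficient is $\lambda^{s|\beta|}\eta^\beta\overline{\partial^{\alpha-\beta}\fy(x-\lambda y)}$. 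For a neighborhood of any $(y_0,\eta_0)$ with $y_0\neq 0$, one has $|x-\lambda y|\gtrsim\lambda$ uniformly for large $\lambda$, so the Schwartz decay of $\fy$ and its derivatives beats the at most polynomial $\lambda^{s|\alpha|}$ growth from the $\eta$-factors, giving the desired superpolynomial decay; this shows $\WFgs(D^\alpha\delta_x)\subseteq\{0\}\times(\rr d\setminus 0)$. For the converse inclusion, I would evaluate at $y=0,\eta=\eta_0\neq 0$ with a window $\fy$ satisfying $\fy(x)\neq 0$: the leading term $(-i\lambda^s\eta_0)^\alpha\overline{\fy(x)}$ (which includes the $\alpha=0$ case) either grows like $\lambda^{s|\alpha|}$ or remains a nonzero constant, neither of which admits superpolynomial decay in $\lambda$, so $(0,\eta_0)\in\WFgs(D^\alpha\delta_x)$.

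For part (ii), I would pass through the Fourier transform. Using $\cF(x^\alpha)=(2\pi)^{d/2}(-1)^{|\alpha|}D^\alpha\delta_0$ and Proposition \ref{prop:sGabormetaplectic}(i) in the form $\WF_g^{1/s}(\wh u)=\J\WFgs(u)$, one obtains
\begin{equation*}
\WFgs(x^\alpha) = \J^{-1}\WF_g^{1/s}(D^\alpha\delta_0) = \J^{-1}(\{0\}\times(\rr d\setminus 0)) = (\rr d\setminus 0)\times\{0\},
\end{equation*}
where the computation of $\WF_g^{1/s}(D^\alpha\delta_0)$ is exactly part (i) applied with anisotropy parameter $1/s$, and $\J^{-1}(0,\eta)=(-\eta,0)$ by \eqref{eq:Jdef}.

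For part (iii), I would invoke modulation invariance: since $e^{i\la\cdot,\xi\ra}=M_\xi\cdot 1$, Corollary \ref{cor:translationmodulationinvar} yields $\WFgs(e^{i\la\cdot,\xi\ra})=\WFgs(1)$, and the right-hand side is the case $\alpha=0$ of part (ii). The main technical hurdle is purely bookkeeping in part (i): keeping the Fourier/pairing conventions straight so that the Leibniz expansion and the subsequent separation of the ``$y$-large'' and ``$y$-small'' regimes produces the sharp picture $\WFgs(D^\alpha\delta_x)=\{0\}\times(\rr d\setminus 0)$ rather than a weaker inclusion.
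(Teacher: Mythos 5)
Your proposal is correct and follows essentially the same route as the paper: an explicit Leibniz expansion of $V_\fy(D^\alpha\delta_x)$, with the Schwartz decay of the window giving the inclusion into $\{0\}\times(\rr d\setminus 0)$ and the non-decay of the top-order term giving the reverse inclusion, then Proposition \ref{prop:sGabormetaplectic}\,(i) for part (ii) and Corollary \ref{cor:translationmodulationinvar} for part (iii) (the paper merely normalizes to $x=0$ by translation invariance and takes $\fy\equiv 1$ near the origin so that only the $\beta=\alpha$ term survives). The one point worth spelling out --- present at the same level of terseness in the paper's own proof --- is that for $d\geqs 2$ and $\alpha\neq 0$ the leading coefficient $\eta_0^\alpha$ can vanish for some $\eta_0\neq 0$, so the growth at the exact point $(0,\eta_0)$ may fail; this is repaired by evaluating at nearby frequencies $\eta$ with $\eta^\alpha\neq 0$, which lie in every neighborhood occurring in Definition \ref{def:WFgs} (equivalently, by the closedness and $s$-conicity of $\WFgs$).
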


\begin{proof}
Due to Corollary \ref{cor:translationmodulationinvar} we may assume $x=0$ in (i) and $\xi=0$ in (iii). 
By Proposition \ref{prop:sGabormetaplectic} (i) it suffices to show (i), since $\wh {D^ \alpha \delta_0} (\xi) = (2 \pi)^{-\frac{d}{2}} \xi^{\alpha}$. 

Let $\fy \in \cS (\rr d)$ satisfy $\fy \equiv 1$ in a neighborhood of the origin. 
We have
\begin{equation*}
V_\fy D^ \alpha \delta_0 (x,\xi) = (2 \pi)^{- \frac{d}{2}} \sum_{\beta \leqs \alpha} \binom{\alpha}{\beta} \xi^\beta \overline{ D^{\alpha-\beta} \fy (-x)}. 
\end{equation*}

If $\xi \neq 0$ we obtain for $\lambda > 0$
\begin{equation*}
V_\fy D^ \alpha \delta_0 (0, \lambda^s \xi) = (2 \pi)^{- \frac{d}{2}} \lambda^{s |\alpha|} \xi^\alpha
\end{equation*}
which does not decay as a function of $\lambda$. Thus 
\begin{equation}\label{eq:diracWFs1}
\{ 0 \} \times ( \rr d \setminus 0 ) \subseteq \WFgs ( D^\alpha \delta_0 ). 
\end{equation}

Suppose on the other hand $(x_0, \xi_0) \in T^* \rr d$ and $x_0 \neq 0$. 
If $0 < \ep < |x_0|/2$,  $x \in x_0 + \rB_\ep$, $\xi \in \xi_0 + \rB_\ep$ and $\lambda \geqs 1$ then 
for any $n \in \no$ we have 
\begin{align*}
|V_\fy D^ \alpha \delta_0 (\lambda x, \lambda^s \xi) |
& = (2 \pi)^{- \frac{d}{2}} \left| \sum_{\beta \leqs \alpha} \binom{\alpha}{\beta} \lambda^{s |\beta|} \xi^\beta \overline{ D^{\alpha-\beta} \fy (- \lambda x)} \right| \\
& \lesssim \sum_{\beta \leqs \alpha} \binom{\alpha}{\beta} \lambda^{s |\beta|} |\xi|^\beta \eabs{ \lambda x}^{-n} \\
& \lesssim \lambda^{s |\alpha| - n}. 
\end{align*}
This shows
\begin{equation*}
\WFgs ( D^\alpha \delta_0 )
\subseteq 
\{ 0 \} \times ( \rr d \setminus 0 ) 
\end{equation*}
so combining with \eqref{eq:diracWFs1} we have shown \eqref{eq:diracWFs} when $x = 0$. 
\end{proof}

%%%%%%%%%%%%%%%%%%%%%%%%%%%%%%%%%%%%%%%%%%
\section{Microellipticity for anisotropic Gabor wave front sets}\label{sec:microellipticgabor}
%%%%%%%%%%%%%%%%%%%%%%%%%%%%%%%%%%%%%%%%%%

The main result in this section is the microelliptic inclusion expressed in Theorem \ref{thm:microelliptic}. 
To get there we need a definition and several auxiliary results. 

\begin{defn}\label{def:csupp}
Suppose $s > 0$, $a \in G^{m,s}$ and let $p$ be the projection \eqref{eq:projection}. 
The $s$-conical support $\csupp_{s} (a) \subseteq T^* \rr d \setminus 0$ of $a$ is
defined as follows. 
A point $z_0 \in T^* \rr d \setminus 0$ satisfies $z_0 \notin \csupp_{s} (a)$
if there exists $\ep > 0$ such that 
\begin{align*}
& \, \supp (a) \cap \overline{ \{ z \in \rr {2d} \setminus 0, \ | p(z) - p(z_0) | < \ep \} } \\
= & \, \supp (a) \cap \overline{ \Gamma }_{p(z_0), \ep} 
\quad \mbox{is compact in} \quad \rr {2d}. 
\end{align*}
\end{defn}

Clearly $\csupp_{s} (a) \subseteq T^* \rr d \setminus 0$ is $s$-conic. 

\begin{prop}\label{prop:sconesupp}
Let $s > 0$. 
If $u \in \cS'(\rr d)$ and $a \in G_0^m$ then 
\begin{equation}\label{eq:WFcsuppinclusion}
\WFgs ( a^w(x,D) u) \subseteq \csupp_{s} (a). 
\end{equation}
\end{prop}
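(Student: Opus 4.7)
The plan is to prove the contrapositive. Let $z_0 \in T^*\rr d \setminus 0$ with $z_0 \notin \csupp_s(a)$. By Definition~\ref{def:csupp}, there exist $\ep, R > 0$ such that $\supp(a) \cap \overline{\Gamma}_{p(z_0), 4\ep} \subseteq \rB_R$; consequently $a$ (and all its derivatives) vanish identically on the open $s$-conic set $\Gamma_{p(z_0), 4\ep} \setminus \overline{\rB}_R$. I would choose a bounded open neighborhood $V$ of $z_0$, bounded away from the origin, with $V \subseteq \Gamma_{p(z_0), \ep}$; by the $s$-conical invariance $p(\lambda x, \lambda^s \xi) = p(x,\xi)$ from \eqref{eq:projection} the point $z := (\lambda x, \lambda^s \xi)$ remains in $\Gamma_{p(z_0), \ep}$ for every $(x,\xi) \in V$ and $\lambda > 0$, with $|z| \gtrsim \lambda^{\min(1,s)}$ for $\lambda \geqs 1$. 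The goal is to show $\lambda^N |V_\fy(a^w(x,D)u)(z)|$ is bounded uniformly on $V$ for every $N$.

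Starting from the representation \eqref{eq:STFTWeylop}, I would split the $w$-integral at $|w| = c_0 \lambda^{\min(1,s)}$ for a small $c_0 > 0$ into $I_1$ (small $w$) and $I_2$ (large $w$), in the spirit of the proof of Proposition~\ref{prop:microlocal}. For $I_2$ the argument of that proposition transfers verbatim: the polynomial bound \eqref{eq:STFTtempered} on $V_\fy u$ combined with the frequency decay \eqref{eq:STFTvillkor1} of $V_\Phi a$ and Peetre's inequality yields $\lambda^N I_2 \leqs C_N$ upon taking $L$ in \eqref{eq:STFTvillkor1} sufficiently large. The new point is the treatment of $I_1$, where the vanishing of $a$ enters. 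For $|w| \leqs c_0 \lambda^{\min(1,s)}$ and $\lambda$ large, the point $z' := z - w/2$ lies in $\Gamma_{p(z_0), 2\ep}$ with $|z'| \gtrsim \lambda^{\min(1,s)}$. Writing
\begin{equation*}
V_\Phi a(z', \J w) = (2\pi)^{-d} \int_{\rr{2d}} a(y) \overline{\Phi(y - z')}\, e^{-i\la y, \J w\ra}\, \dd y,
\end{equation*}
the integrand vanishes for $y \in \Gamma_{p(z_0), 4\ep} \setminus \overline{\rB}_R$. For the remaining $y$, either $y \in \overline{\rB}_R$ (so $|y - z'| \geqs |z'| - R \gtrsim \lambda^{\min(1,s)}$), or $y \notin \Gamma_{p(z_0), 4\ep}$ so that $|p(y) - p(z')| \geqs 2\ep$. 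Granting that in both cases $|y - z'| \gtrsim \lambda^{\min(1,s)}$, the Schwartz decay of $\Phi$ together with standard integration by parts in $y$ against the oscillatory factor $e^{-i\la y,\J w\ra}$ gives $|V_\Phi a(z', \J w)| \lesssim_{M,L} \lambda^{-M}\eabs{w}^{-L}$ for any $M, L \geqs 0$, and substituting into $I_1$ with polynomial bounds on $|V_\fy u(z-w)|$ yields $\lambda^N I_1 \leqs C_N$.

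The main obstacle is the anisotropic separation estimate used in $I_1$: if $z' \in \Gamma_{p(z_0), 2\ep}$ with $|z'|$ large and $y \notin \Gamma_{p(z_0), 4\ep}$, then $|y - z'| \gtrsim \lambda(z')^{\min(1,s)}$, with implicit constants depending on $\ep$ and $s$. The natural argument is a case split: writing $y = (\lambda_1 x_1, \lambda_1^s \xi_1)$, $z' = (\lambda_2 x_2, \lambda_2^s \xi_2)$ with $(x_i,\xi_i) \in \sr{2d-1}$, when $\lambda_1/\lambda_2 \notin [1/2, 2]$ the triangle inequality together with $|x_i|^2 + |\xi_i|^2 = 1$ gives the bound directly; when $\lambda_1 \asymp \lambda_2$, the angular separation $|(x_1,\xi_1) - (x_2,\xi_2)| \geqs \ep$ combined with the anisotropic expansion $|y - z'|^2 = |\lambda_1 x_1 - \lambda_2 x_2|^2 + |\lambda_1^s \xi_1 - \lambda_2^s \xi_2|^2$ and a further split on whether $|x_1 - x_2|$ or $|\xi_1 - \xi_2|$ dominates yields $|y - z'| \gtrsim \min(\lambda_2, \lambda_2^s) = \lambda_2^{\min(1,s)}$. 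Combining the estimates for $I_1$ and $I_2$ gives $|V_\fy(a^w(x,D)u)(\lambda x, \lambda^s \xi)| = \cO(\lambda^{-N})$ uniformly for $(x,\xi) \in V$ and every $N \geqs 0$, so $z_0 \notin \WFgs(a^w(x,D)u)$ by Definition~\ref{def:WFgs}.
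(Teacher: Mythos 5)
Your argument is correct and rests on the same core mechanism as the paper's proof: the representation \eqref{eq:STFTWeylop}, the vanishing of $a$ and all its derivatives on an $s$-conic set minus a ball, integration by parts in the integral defining $V_\Phi a$ to gain decay in the covariable, an anisotropic separation estimate producing the factor $\lambda^{\min(1,s)}$, and the polynomial bound \eqref{eq:STFTtempered} on $V_\fy u$. The differences are in the bookkeeping, and they are worth noting. You split the outer $w$-integral at $|w| \asymp \lambda^{\min(1,s)}$ and recycle the large-$w$ estimate from Proposition~\ref{prop:microlocal}, which forces you to show that the shifted point $z - w/2$ stays inside $\Gamma_{p(z_0),2\ep}$ for small $w$; the paper instead proves a single bound \eqref{eq:STFTconesupp4} on $V_\Phi a\left( \lambda x - \frac{y}{2}, \lambda^s \xi - \frac{\eta}{2}, \eta, -y\right)$ valid for \emph{all} $(y,\eta)$, absorbing the shift by Peetre's inequality at the unshifted point $(\lambda x,\lambda^s\xi)$, so no splitting and no shift-stability claim are needed. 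Second, you work directly with the projection cones $\Gamma_{p(z_0),\ep}$ and must prove the separation estimate $|y-z'|\gtrsim \lambda^{\min(1,s)}$ by a case analysis; note that your thresholds $\lambda_1/\lambda_2\notin[1/2,2]$ must in general be chosen depending on $s$ (for small $s$ the bound $\lambda_2^s/\sqrt2-\lambda_1^s$ need not be positive at ratio $1/2$), and the balanced case also needs control of $|\lambda_1-\lambda_2|$; a cleaner route is a compactness argument (positive distance between $\sr{2d-1}\cap\overline{\Gamma}_{p(z_0),2\ep}$ and the closed complement of $\Gamma_{p(z_0),4\ep}$, then rescale anisotropically). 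The paper sidesteps exactly this point by passing, via Lemma~\ref{lem:sconesequiv}, to the cones $\wt\Gamma_{z_0,2\ep}$ of Definition~\ref{def:scone2}, for which the separation $|(\lambda x,\lambda^s\xi)-w|\geqs \ep\,\lambda^{\min(1,s)}$ is immediate from the definition. So: your route is sound and self-contained once you fill in the two geometric claims you flag, but the paper's choice of $\wt\Gamma$-neighborhoods plus Peetre is precisely the device that makes those claims one-liners.
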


\begin{proof}
We have $|\pd \beta a(w)| \lesssim \eabs{w}^{m}$ 
for any $\beta \in \nn {2d}$.
We may assume that $\csupp_{s} (a) \neq T^* \rr d \setminus 0$ since the inclusion is trivial otherwise. 
Let $0 \neq z_0 \notin \csupp_{s} (a)$. 
We may assume $|z_0| = 1$. 

By Lemma \ref{lem:sconesequiv} we may assume that 
\begin{equation}\label{eq:suppa}
\supp (a) \subseteq \rB_R \cup \, \left( \rr {2d} \setminus \wt \Gamma_{z_0, 2 \ep} \right)
\end{equation}
for some $R > 0$ and $0 < \ep < 1$. 

Let $\fy \in \cS(\rr d) \setminus 0$ and set $\Phi = W(\fy,\fy) \in \cS(\rr {2d})$. 
We start by proving the following estimate for any $\alpha, \beta \in \nn {2d}$ such that $\beta \leqs \alpha$, any $\lambda \geqs 1$, $(x,\xi) \in z_0 + \rB_\ep$, and any $L \geqs | m | + 2d + 1$. 
We have
\begin{equation}\label{eq:STFTconesupp1}
\int_{\rr {2d}} \left| \pd \beta a(w) \right| \, \left| \partial^{\alpha-\beta} \Phi \left( w - \left( \lambda x - \frac{y}{2}, \lambda^s \xi - \frac{\eta}{2} \right) \right) \right| \, \dd w
\lesssim \lambda^{-L \min(1,s) + | m | \max(1,s) } \eabs{(y,\eta)}^{2L}. 
\end{equation}

In fact using Peetre's inequality we obtain on the one hand for any $L \geqs 0$
\begin{equation}\label{eq:STFTconesupp2}
\begin{aligned}
& \int_{\rB_R} \left| \pd \beta a(w) \right| \, \left| \partial^{\alpha-\beta} \Phi \left( w - \left( \lambda x - \frac{y}{2}, \lambda^s \xi - \frac{\eta}{2} \right)  \right) \right| \, \dd w \\
& \lesssim \int_{\rB_R} \left\langle w - \left( \lambda x - \frac{y}{2}, \lambda^s \xi - \frac{\eta}{2} \right)  \right\rangle^{-L} \, \dd w
\lesssim \int_{\rB_R} \eabs{w}^L \eabs{ (y,\eta) }^L \eabs{ (\lambda x, \lambda^s \xi ) }^{-L} \, \dd w \\
& \lesssim \lambda^{-L \min(1,s)} \eabs{(y,\eta)}^L. 
\end{aligned}
\end{equation}

On the other hand, since 
\begin{equation*}
\left| \left( \lambda^{-1} u, \lambda^{-s} \theta \right) - z_0  \right| \geqs 2 \ep \quad \forall \lambda > 0 \quad \forall (u,\theta) \in \rr {2d} \setminus \wt \Gamma_{z_0, 2 \ep} 
\end{equation*}
we have for $(x,\xi) \in z_0 + \rB_\ep$
\begin{equation*}
\left| \left( \lambda^{-1} u, \lambda^{-s} \theta \right) - (x,\xi) \right| \geqs \ep \quad \forall \lambda > 0 \quad \forall (u,\theta) \in \rr {2d} \setminus \wt \Gamma_{z_0, 2 \ep}. 
\end{equation*}
It follows that for $\lambda \geqs 1$, $(x,\xi) \in z_0 + \rB_\ep$ and 
$w = (u,\theta) \in \rr {2d} \setminus \wt \Gamma_{z_0, 2 \ep}$ we have
\begin{align*}
\left| w - ( \lambda x, \lambda^s \xi) \right|^2 
& = \lambda^2 | \lambda^{-1} u - x |^2 + \lambda^{2s} | \lambda^{-s} \theta - \xi |^2 \\
& \geqs \lambda^{2 \min (1,s)} \ep^2. 
\end{align*}

This gives for $(x,\xi) \in z_0 + \rB_\ep$ and any $L \geqs  | m | + 2d + 1$
\begin{equation}\label{eq:STFTconesupp3}
\begin{aligned}
& \int_{\rr {2d} \setminus \wt \Gamma_{z_0, 2 \ep} } \left| \pd \beta a(w) \right| \, \left| \partial^{\alpha-\beta} \Phi \left( w - \left( \lambda x - \frac{y}{2}, \lambda^s \xi - \frac{\eta}{2} \right) \right) \right| \, \dd w \\
& \lesssim \int_{\rr {2d} \setminus \wt \Gamma_{z_0, 2 \ep} } \eabs{w}^{|m|} 
\left\langle w - \left( \lambda x - \frac{y}{2}, \lambda^s \xi - \frac{\eta}{2} \right) \right\rangle^{- 2 L} \, \dd w \\
& \lesssim \eabs{(y,\eta)}^{2 L} \int_{\rr {2d} \setminus \wt \Gamma_{z_0, 2 \ep} } \eabs{w}^{|m|} 
\left\langle w - \left( \lambda x, \lambda^s \xi \right)  \right\rangle^{- L} 
\left\langle w - \left( \lambda x, \lambda^s \xi \right)  \right\rangle^{- L} 
\, \dd w \\
& \lesssim \lambda^{-L \min(1,s)} \eabs{(y,\eta)}^{2 L}
\int_{\rr {2d}} 
\left\langle w + \left( \lambda x, \lambda^s \xi \right) \right\rangle^{|m|} 
\eabs{w}^{-L} \, \dd w \\
& \lesssim \lambda^{-L \min(1,s)} \eabs{(y,\eta)}^{2 L}
\left\langle \left( \lambda x, \lambda^s \xi \right) \right\rangle^{|m|} 
\int_{\rr {2d}} 
\eabs{w}^{|m|-L} \, \dd w \\
&\lesssim \lambda^{-L \min(1,s) + | m | \max(1,s) } \eabs{(y,\eta)}^{2L}. 
\end{aligned}
\end{equation}
Combining \eqref{eq:suppa}, \eqref{eq:STFTconesupp2} and \eqref{eq:STFTconesupp3} we have now shown 
\eqref{eq:STFTconesupp1}.

Next we observe that integration by parts gives for any $\alpha \in \nn {2d}$ and $z, \zeta \in \rr {2d}$
\begin{align*}
\left| \zeta^\alpha V_\Phi a \left( z, \zeta \right) \right|
& = (2 \pi)^{-d} \left| \int_{\rr {2d}} a(w) \pdd w \alpha \left( e^{- i \la w,\zeta \ra} \right) \overline{\Phi(w-z)} \dd w \right| \\
& \lesssim \sum_{\beta \leqs \alpha} \binom{\alpha}{\beta} \int_{\rr {2d}} \left| \pd \beta a(w) \right| \, \left| \partial^{\alpha-\beta} \Phi(w - z) \right| \, \dd w. 
\end{align*}

Combining this with \eqref{eq:STFTconesupp1} we obtain for $(x,\xi) \in z_0 + \rB_\ep$, and any $M \in \no$, 
$L \geqs | m | + 2d + 1$ and $\lambda \geqs 1$
\begin{align*}
& \eabs{(y,\eta)}^{2(M+L)} \left| V_\Phi a \left( \lambda x-\frac{y}{2}, \lambda^s \xi-\frac{\eta}{2},  \eta, -y \right) \right| \\
& \lesssim \max_{|\alpha| \leqs 2(M+L)} \left| (y,\eta)^{\alpha} V_\Phi a \left( \lambda x-\frac{y}{2}, \lambda^s \xi-\frac{\eta}{2},  \eta, -y \right) \right| \\
& \lesssim \max_{|\alpha| \leqs 2(M+L)}
\sum_{\beta \leqs \alpha} \binom{\alpha}{\beta} \int_{\rr {2d}} \left| \pd \beta a(w) \right| \, \left| \partial^{\alpha-\beta} \Phi \left( w - \left( \lambda x-\frac{y}{2}, \lambda^s \xi-\frac{\eta}{2} \right) \right) \right| \, \dd w \\
& \lesssim \lambda^{-L \min(1,s) + | m | \max(1,s) } \eabs{(y,\eta)}^{2L}. 
\end{align*}

Given any $N, M \geqs 0$ we may pick $L \geqs 0$ such that 
$L \min(1,s) - | m | \max(1,s) \geqs N$. 
We thus have 
\begin{equation}\label{eq:STFTconesupp4}
\left| V_\Phi a \left( \lambda x-\frac{y}{2}, \lambda^s \xi-\frac{\eta}{2},  \eta, -y \right) \right|
\lesssim \lambda^{-N} \eabs{(y,\eta)}^{-M}
\end{equation}
for any $N, M \geqs 0$, $\lambda \geqs 1$ and $(x,\xi) \in z_0 + \rB_\ep$. 

Finally we prove that $z_0 \notin \WFgs ( a^w(x,D) u)$. 
We use \eqref{eq:STFTWeylop} from the proof of Proposition \ref{prop:microlocal} and \eqref{eq:STFTconesupp4}. 
This gives for $(x,\xi) \in z_0 + \rB_\ep$, using \eqref{eq:STFTtempered} for some $k \geqs 0$, for any $N, M \geqs 0$, $\lambda \geqs 1$
\begin{align*}
& |V_\varphi (a^w(x,D) u) ( \lambda x, \lambda^s \xi )| \\
& \lesssim 
\int_{\rr {2d}}  |V_\fy u \left( (\lambda x, \lambda^s \xi) - (y,\eta) \right)| \, \left| V_\Phi a \left( \lambda x - \frac{y}{2}, \lambda^s \xi - \frac{\eta}{2}, \eta, - y \right) \right| \, \dd y \, \dd \eta \\
& \lesssim \eabs{ (\lambda x, \lambda^s \xi) }^k
\int_{\rr {2d}}  \eabs{(y,\eta) }^k \, \left| V_\Phi a \left( \lambda x - \frac{y}{2}, \lambda^s \xi - \frac{\eta}{2}, \eta, - y \right) \right| \, \dd y \, \dd \eta \\
& \lesssim \lambda^{k \max (1,s) - N}
\end{align*}
provided $M \geqs k + 2 d + 1$. 
Since $N \geqs 0$ is arbitrary we have shown $z_0 \notin \WFgs ( a^w(x,D) u)$, 
and thus \eqref{eq:WFcsuppinclusion}. 
\end{proof}

As another tool for the microellipticity result Theorem \ref{thm:microelliptic} we need the following lemma
where we use Definition \ref{def:noncharacteristic}. 

\begin{lem}\label{lem:microelliptic}
Suppose $s > 0$, $a \in G^{m,s}$ and $\charac_{s,m_1} (a) \neq T^* \rr d \setminus 0$ for some $m_1 \leqs m$. 
Let $\Gamma \subseteq T^* \rr d \setminus 0$ be a closed $s$-conic set such that 
$\charac_{s,m_1} (a) \cap \Gamma=\emptyset$. 
Then there exists $\rho > 0$ such that for any 
$\chi \in G^{0,s}$ with $\supp(\chi) \subseteq \Gamma \setminus \rB_\rho$,
there exists $b \in G^{-m_1,s}$ such that 
\begin{equation*}
b \wpr a = \chi + r 
\end{equation*}
where $r \in \cS(\rr {2d})$. 
\end{lem}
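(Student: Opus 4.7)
The plan is to construct the parametrix $b$ by a Borel-type iterative summation, in analogy with Shubin's isotropic microelliptic construction, with the iteration succeeding precisely because of an algebraic cancellation between factors of $|a|$ supplied by \eqref{eq:boundderivative1} and factors of $|a|^{-1}$ appearing in derivatives of $b_j=-r_j/a$. First I would handle the geometric setup: since $\Gamma\cap\sr{2d-1}$ is compact and disjoint from the closed $s$-conic set $\charac_{s,m_1}(a)$, the non-characteristic hypothesis yields finitely many points $z_1,\dots,z_N\in \Gamma\cap\sr{2d-1}$ and common $\ep, R_0>0$ such that $\Gamma\subseteq \Gamma_1:=\bigcup_i \Gamma_{s,p(z_i),\ep}$, $\Gamma_1\cap\charac_{s,m_1}(a)=\emptyset$, and the estimates \eqref{eq:lowerbound1}, \eqref{eq:boundderivative1} hold on $\Gamma_1$ as soon as $|x|+|\xi|^{1/s}\geqs R_0$. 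Set $R=2R_0$. Combining Lemma~\ref{lem:cutoff} applied to each $\Gamma_{s,p(z_i),\ep}$ with a smoothed-max argument produces $\psi\in G^{0,s}$ with $0\leqs\psi\leqs 1$, $\psi\equiv 1$ on a neighborhood of $\Gamma\setminus\rB_R$, and $\supp\psi\subseteq \Gamma_1\setminus\rB_{R_0}$.

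Next I would set $b_0:=\psi\chi/a$, extended by zero off $\supp\psi$; this is smooth because $a\neq 0$ on $\supp\psi$. A routine induction with the Leibniz rule and \eqref{eq:boundderivative1} delivers the refined pointwise bound
\[
\bigl|\pdd x \mu \pdd \xi \nu b_0(x,\xi)\bigr|\lesssim \lambda(x,\xi)^{-|\mu|-s|\nu|}/|a(x,\xi)|,
\]
which combined with $|a|\geqs C\lambda^{m_1}$ yields $b_0\in G^{-m_1,s}$. Applying the expansion \eqref{eq:calculuscomposition1} to $b_0\wpr a$, the zero-order term equals $b_0 a=\chi$ (since $\psi\equiv 1$ on $\supp\chi$), and each higher-order term admits the bound
\[
\bigl|D_x^\beta\pdd\xi\alpha b_0\cdot D_x^\alpha\pdd\xi\beta a\bigr|\lesssim \frac{\lambda^{-|\beta|-s|\alpha|}}{|a|}\cdot|a|\,\lambda^{-|\alpha|-s|\beta|}=\lambda^{-(1+s)|\alpha+\beta|},
\]
so that $r_1:=b_0\wpr a-\chi\in G^{-(1+s),s}$ and is supported in $\supp\psi$ modulo a Schwartz function.

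The iteration then proceeds by setting $b_{j+1}:=-\psi\, r_{j+1}/a$: the same calculation propagates the refined estimate to all $b_j\in G^{-m_1-j(1+s),s}$, and the Weyl-expansion cancellation---with $b_{j+1}a=-r_{j+1}$ modulo $\cS$---yields successive residuals $r_{j+2}\in G^{-(j+2)(1+s),s}$ such that $\sum_{i=0}^{j+1}b_i\wpr a=\chi+r_{j+2}$. Lemma~\ref{lem:asymptoticsum} then produces $b\in G^{-m_1,s}$ with $b\sim \sum_j b_j$, and the resulting inclusion $b\wpr a-\chi\in G^{-N(1+s),s}$ for every $N$ forces $b\wpr a-\chi\in\cS(\rr{2d})$. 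The main obstacle I anticipate is to propagate the strengthened pointwise estimate $|\pd\gamma b_j|\lesssim \lambda^{-|\gamma_x|-s|\gamma_\xi|-j(1+s)}/|a|$ (rather than merely the symbol class membership) through the iteration, since it is this $|a|^{-1}$ that absorbs the $|a|$ arising from $\partial a$ via \eqref{eq:boundderivative1}; without such a refinement the residual orders would decrease by only $(1+s)-(m-m_1)$ per step, which need not be positive. The remaining technicalities---building $\psi$ by an $s$-conic partition of unity and keeping track of Schwartz tails produced by the asymptotic nature of \eqref{eq:calculuscomposition1}---are routine.
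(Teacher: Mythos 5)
Your proposal is correct and follows essentially the same route as the paper's proof: first approximation $b_0=\chi/a$, the refined derivative bounds carrying an explicit factor $|a|^{-1}$ (so that via \eqref{eq:boundderivative1} each step of the iteration $b_{j+1}=-r_j/a$ gains a full order $(1+s)$, independently of $m-m_1$), asymptotic summation by Lemma~\ref{lem:asymptoticsum}, and the conclusion $b\wpr a-\chi\in\bigcap_N G^{-N(1+s),s}=\cS(\rr{2d})$. The extra cutoff $\psi$ and the explicit compactness/covering argument on $\Gamma\cap\sr{2d-1}$ are harmless refinements of details the paper leaves implicit (note that since $\supp\chi$ and all subsequent $r_j$ lie where $\psi\equiv 1$, your $\psi$-factors are in fact redundant).
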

\begin{proof}
The proof follows established principles in pseudodifferential calculus. 
Therefore we content ourselves with a sketch of the main steps of the construction of the microlocal parametrix $b$.

As a first approximation set $b_0:= a^{-1} \chi$. The estimates 
\begin{equation*}
|\pdd x \alpha \pdd \xi \beta (a^{-1}) (x,\xi)| 
\leqs C_{\alpha \beta} |a(x,\xi)|^{-1} \mu_s (x,\xi)^{-|\alpha| - s |\beta|}, \quad \alpha, \beta \in \nn d, \quad (x,\xi) \in \Gamma, \quad |x| + |\xi|^{\frac1s}\geqs R, 
\end{equation*}
are consequences of the non-characteristic estimates 
\eqref{eq:lowerbound1}, \eqref{eq:boundderivative1} and induction.   

By Leibniz' rule they imply the estimates
\begin{equation*}
|\pdd x \alpha \pdd \xi \beta b_0 (x,\xi)| 
\leqs C_{\alpha \beta} |a(x,\xi)|^{-1} \mu_s (x,\xi)^{-|\alpha| - s |\beta|}, \quad \alpha, \beta \in \nn d, \quad |x| + |\xi|^{\frac1s} \geqs R, 
\end{equation*}
and consequently $b_0 \in G^{-m_1,s}$ if $\rho > 0$ is sufficiently large. 

Then, by \eqref{eq:calculuscomposition1} and again the non-characteristic estimates 
\eqref{eq:lowerbound1} and \eqref{eq:boundderivative1} it follows that $b_0\wpr a=\chi + r_0 + r_{0,\cS}$ with $r_0\in G^{-(1+s),s}$ satisfying $\supp(r_0) \subseteq \supp(\chi)$ and $r_{0,\cS}\in\cS(\rr {2d})$. Subsequently, setting $b_1 := - a^{-1} r_0$, we notice that 
we obtain the estimates
\begin{equation*}
|\pdd x \alpha \pdd \xi \beta b_1 (x,\xi)| \leqs C_{\alpha \beta} |a(x,\xi)|^{-1} \mu_s (x,\xi)^{-(1+s)-|\alpha|- s |\beta|}, \quad \alpha,\beta \in \nn d, \quad |x| + |\xi|^{\frac1s} \geqs R, 
\end{equation*}
and consequently $b_1\in G^{- m_1 - (1+s),s}$. 

This gives
\begin{equation*}
( b_0 + b_1 ) \wpr a = \chi + r_0 + r_{0,\cS} -r_0 + r_1 + r_{1,\cS} = \chi + r_1 + r_{0,\cS} + r_{1,\cS}
\end{equation*}
with $r_1\in G^{- 2(1+s),s}$, $\supp(r_1) \subseteq \supp(\chi)$ and $r_{1,\cS}\in\cS(\rr {2d})$. Constructing in this way recursively  
$b_{j+1}:=-a^{-1} r_j \in G^{-m_1- (s+1)(j+1),s}$ and $r_{j+1} \in G^{-(s+1)(j+2),s}$ with $\supp(r_{j+1}) \subseteq \supp(\chi)$, $j=1,2,\dots$, one obtains a sequence of symbols $(b_j)_{j \geqs 0}$.

Finally set $b\sim \sum_{j=0}^\infty b_j \in G^{-m_1,s}$. The symbol $b$ satisfies $b \wpr a = \chi + r$ with $r \in \cS(\rr {2d})$.
\end{proof}

Finally we are in a position to state and prove the main result on microellipticity in the anisotropic Shubin calculus. 
The proof is short due to the long preparation. 
Note that we require that the symbol is anisotropic, as opposed to Proposition \ref{prop:microlocal} where 
the symbol is allowed to be isotropic.  

\begin{thm}\label{thm:microelliptic}
Let $s > 0$. 
If $u \in \cS'(\rr d)$ and $a \in G^{m,s}$ then for any $m_1 \leqs m$
\begin{equation*}
\WFgs (u) \subseteq \WFgs ( a^w(x,D) u ) \bigcup \charac_{s,m_1}(a). 
\end{equation*}
\end{thm}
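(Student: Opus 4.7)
The plan is to use the classical microlocal parametrix method adapted to the anisotropic Shubin setting. I take a point $z_0 \in T^* \rr d \setminus 0$ outside both $\WFgs(a^w(x,D) u)$ and $\charac_{s,m_1}(a)$ and aim to show $z_0 \notin \WFgs(u)$. The case $\charac_{s,m_1}(a) = T^* \rr d \setminus 0$ is vacuous, so I will assume its complement is non-empty.

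First, since $\charac_{s,m_1}(a)$ is closed and $s$-conic, I produce $\delta > 0$ such that the closed $s$-conic set $\Gamma = \overline{\Gamma_{s,p(z_0),\delta/2}}$ (closure in $\rr {2d} \setminus 0$) is disjoint from $\charac_{s,m_1}(a)$ and is contained in $\Gamma_{s,p(z_0),\delta}$. Lemma \ref{lem:cutoff} then supplies, for any sufficiently small $\ep > 0$ and sufficiently large $r > 0$, a cutoff $\chi \in G^{0,s}$ supported in $\Gamma_{z_0,2\ep} \setminus \rB_{r/2} \subseteq \Gamma$ and equal to $1$ on $\Gamma_{z_0,\ep} \setminus \overline{\rB_r}$. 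Choosing $r$ at least as large as the threshold supplied by Lemma \ref{lem:microelliptic} applied to this $\Gamma$, that lemma delivers a microlocal parametrix $b \in G^{-m_1,s}$ satisfying
\[ b \wpr a = \chi + r_0, \qquad r_0 \in \cS(\rr {2d}). \]

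The next step is to quantize this identity via the Weyl calculus and apply it to $u$, yielding $\chi^w(x,D) u = b^w(x,D) a^w(x,D) u - r_0^w(x,D) u$. Since $r_0 \in \cS(\rr {2d})$, the Weyl operator $r_0^w(x,D)$ is regularizing and maps $\cS'(\rr d)$ into $\cS(\rr d)$, so the second term has empty anisotropic Gabor wave front set. Proposition \ref{prop:microlocal}, applied with the isotropic Shubin symbol $b$ acting on $a^w(x,D) u$, gives $\WFgs(b^w(x,D) a^w(x,D) u) \subseteq \WFgs(a^w(x,D) u)$, which by hypothesis excludes $z_0$. Hence $z_0 \notin \WFgs(\chi^w(x,D) u)$.

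To conclude, I would exploit the decomposition $u = \chi^w(x,D) u + (1-\chi)^w(x,D) u$, valid because $1^w(x,D) = I$. Since $1, \chi \in G^{0,s}$ we have $1 - \chi \in G^{0,s} \subseteq G_0^0$, and by construction $1 - \chi$ vanishes on $\Gamma_{z_0,\ep} \setminus \overline{\rB_r}$. Consequently, for $\ep' \in (0,\ep)$ the set $\supp(1 - \chi) \cap \overline{\Gamma}_{z_0,\ep'}$ is contained in $\overline{\rB_r} \cup \{0\}$ and hence bounded, which gives $z_0 \notin \csupp_s(1 - \chi)$. Proposition \ref{prop:sconesupp} then yields $z_0 \notin \WFgs((1-\chi)^w(x,D) u)$, and combining with the previous step produces $z_0 \notin \WFgs(u)$. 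The main obstacle in the argument is the careful bookkeeping of the nested $s$-conic neighborhoods so that the hypotheses of Lemmas \ref{lem:cutoff}, \ref{lem:microelliptic} and of Proposition \ref{prop:sconesupp} are all simultaneously satisfied; no additional analytic machinery beyond the already established parametrix, microlocality, and conical-support results is needed.
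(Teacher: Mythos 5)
Your proposal is correct and follows essentially the same route as the paper: a cutoff from Lemma \ref{lem:cutoff}, the parametrix $b \wpr a = \chi + r_0$ from Lemma \ref{lem:microelliptic}, microlocality of $b^w(x,D)$ via Proposition \ref{prop:microlocal} together with the embedding $G^{-m_1,s} \subseteq G_\rho^{m_0}$, and Proposition \ref{prop:sconesupp} for the term $(1-\chi)^w(x,D)u$. Writing the decomposition as $u = \chi^w(x,D)u + (1-\chi)^w(x,D)u$ and then substituting the parametrix identity is merely a reshuffling of the paper's identity $1 = b \wpr a + r + 1 - \chi$, so there is no substantive difference.
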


\begin{proof}
We may assume that $\WFgs ( a^w(x,D) u ) \neq T^* \rr d \setminus 0$ and $\charac_{s,m_1}(a) \neq T^* \rr d \setminus 0$, 
since the inclusion is trivial otherwise.
Let $0 \neq z_0 \notin \WFgs ( a^w(x,D) u )$
and $z_0 \notin \charac_{s,m_1}(a)$. 
Due to $s$-conic invariance we may assume $| z_0 | = 1$. 

Pick $\ep > 0$ such that 
$\overline{\Gamma}_{z_0,2 \ep} \cap \charac_{s,m_1}(a) = \emptyset$,
and pick $\chi \in G^{0,s}$ such that $\supp \chi \subseteq \Gamma_{z_0,2 \ep} \setminus \rB_R$
and $\chi |_{\Gamma_{z_0,\ep} \setminus \overline{\rB}_{2R} } \equiv 1$, 
for $R > 0$ to be chosen.  
This is possible due to 
Lemma \ref{lem:cutoff}. 
Then $z_0 \notin \csupp_s (1-\chi)$, and
due to \eqref{eq:Gmsinclusion} we have $\chi \in G_0^{0}$.
By Proposition \ref{prop:sconesupp} we may thus conclude 
\begin{equation*}
z_0 \notin \WFgs( (1-\chi)^w(x,D) u). 
\end{equation*}

According to Lemma \ref{lem:microelliptic} 
we may pick $R > 0$ such that 
there exists $b \in G^{-m_1,s}$ and $r \in \cS(\rr {2d})$ such that 
$1 = b \wpr a + r + 1 -\chi$, so we have 
\begin{equation*}
u = b^w(c,D) a^w(x,D) u + r^w(x,D) u + (1-\chi)^w(x,D) u. 
\end{equation*}
Here $r^w(x,D) u \in \cS(\rr d)$ which means that $z_0 \notin \WFgs ( r^w(x,D) u)$ trivially. 
By Proposition \ref{prop:microlocal} we have $z_0 \notin \WFgs ( b^w(x,D) a^w(x,D) u )$. 
Thus we may conclude that $z_0 \notin \WFgs (u)$. 
\end{proof}

\begin{cor}\label{cor:WFspropagation}
Let $s > 0$. 
If $u \in \cS'(\rr d)$, $a \in G^{m,s}$ and $\charac_{s,m_1}(a) = \emptyset$ for some $m_1 \leqs m$
then 
\begin{equation*}
\WFgs ( a^w(x,D) u ) = \WFgs (u).  
\end{equation*}
\end{cor}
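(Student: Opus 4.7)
The statement is a direct two-sided inclusion, so the plan is to invoke the two main results just proved, namely Proposition \ref{prop:microlocal} for one direction and Theorem \ref{thm:microelliptic} for the other.

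First I would establish the inclusion $\WFgs(a^w(x,D) u) \subseteq \WFgs(u)$. Since $a \in G^{m,s}$, the embedding \eqref{eq:Gmsinclusion} places $a$ in some isotropic Shubin class $G_\rho^{m_0}$ with $m_0 = \max(m, m/s)$ and $\rho = \min(s, 1/s)$. Proposition \ref{prop:microlocal} applied in $G_\rho^{m_0}$ then yields the inclusion directly. No further computation is needed.

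For the reverse inclusion $\WFgs(u) \subseteq \WFgs(a^w(x,D) u)$, I would simply invoke Theorem \ref{thm:microelliptic}, which gives
\begin{equation*}
\WFgs(u) \subseteq \WFgs(a^w(x,D) u) \cup \charac_{s,m_1}(a).
\end{equation*}
By hypothesis $\charac_{s,m_1}(a) = \emptyset$, so the second term on the right drops out and the desired inclusion follows immediately.

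Combining the two inclusions yields the equality. There is essentially no obstacle in this corollary since both directions are packaged in the previous results; the only bookkeeping point is ensuring that $G^{m,s} \subseteq G_\rho^{m_0}$ so that Proposition \ref{prop:microlocal}, which is stated for isotropic Shubin symbols, applies to $a$. That is exactly what \eqref{eq:Gmsinclusion} provides.
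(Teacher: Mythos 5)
Your proposal is correct and is exactly the argument the paper intends: microlocality (Proposition \ref{prop:microlocal}, applicable to $a \in G^{m,s}$ via the embedding \eqref{eq:Gmsinclusion}) gives one inclusion, and Theorem \ref{thm:microelliptic} with $\charac_{s,m_1}(a) = \emptyset$ gives the other.
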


%%%%%%%%%%%%%%%%%%%%%%%%%%%%%%%%%%%%%%%
\section{The $s$-Gabor wave front set of oscillatory functions}\label{sec:chirp}
%%%%%%%%%%%%%%%%%%%%%%%%%%%%%%%%%%%%%%%

An important reason for the introduction of the anisotropic Gabor wave front set $\WFgs (u)$ is that it describes accurately the phase space singularities of oscillatory functions known generically as chirp signals. 

Let $\fy: \rr d \to \ro$ be a real polynomial of order $m \geqs 2$
\begin{equation}\label{eq:phasefunction}
\fy (x) = \fy_m(x) + p(x)
\end{equation}
where 
\begin{equation}\label{eq:polynomial1}
p (x) = \sum_{0 \leqs |\alpha| < m} c_\alpha x^\alpha, \quad c_\alpha \in \ro, 
\end{equation}
and 
\begin{equation}\label{eq:principalpart}
\fy_m(x) = \sum_{|\alpha| = m} c_\alpha x^\alpha, \quad c_\alpha \in \ro, \quad \exists \alpha \in \nn d: \  |\alpha| = m, \ c_\alpha \in \ro \setminus 0, 
\end{equation}
is the principal part. 

We will study chirp functions of the form 
\begin{equation}\label{eq:chirpdef}
u(x) = e^{i \fy(x)}, \quad x \in \rr d. 
\end{equation}

First we note that for any $\lambda > 0$ and any $1 \leqs j \leqs d$ we have 
\begin{equation}\label{eq:phasederivative1}
\lambda^{-m} \partial_j \left( \fy(\lambda y) \right)
= \partial_j \fy_m (y) + \lambda^{1-m} \partial_j p(\lambda y) 
\end{equation}
and if $|y| \leqs R$ and $\lambda \geqs 1$ then 
\begin{equation}\label{eq:polderivative1}
\lambda^{1-m} | \partial_j p(\lambda y) |
= \left| \sum_{0 \leqs |\alpha| \leqs m-1} \alpha_j c_\alpha y^{\alpha-e_j} \lambda^{|\alpha| - m} \right|
\leqs C_R \lambda^{-1}. 
\end{equation}

The following result shows that only the principal part $\fy_m(x)$ of $\fy$ 
is recorded in $\WF_{\rm g}^{m-1} (u)$, and the $(m-1)$-Gabor wave front set is contained in the $(m-1)$-conic set in phase space which is  
the graph of its gradient, that is $0 \neq x \mapsto ( x , \nabla \fy_m (x))$. 
The gradient of the phase function is known as the instantaneous frequency \cite{Boggiatto1}. 

\begin{thm}\label{thm:chirpWFgs}
If $m \geqs 2$ and $\fy$ is a real polynomial defined by \eqref{eq:phasefunction}, \eqref{eq:polynomial1} and \eqref{eq:principalpart},
and $u$ is defined by \eqref{eq:chirpdef}, then 
\begin{equation}\label{eq:chirpconclusion1}
\WF_{\rm g}^{m-1} (u) 
\subseteq \{ (x, \nabla \fy_m (x) ) \in \rr {2d}: \ x \neq 0 \}. 
\end{equation}
If $d = 1$ and $\fy$ is even or odd then 
\begin{equation}\label{eq:chirpconclusion2}
\WF_{\rm g}^{m-1} (u) 
= \{ (x, \fy_m'(x) ) \in \rr 2: \ x \neq 0 \}. 
\end{equation}
\end{thm}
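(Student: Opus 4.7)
My plan is to treat the two statements separately, and in both cases to substitute $y = \lambda x + z$ in the STFT so that
\begin{equation*}
V_\psi u(\lambda x, \lambda^{m-1}\xi) = (2\pi)^{-d/2}\int_{\rr d} e^{i \Phi_\lambda(z)}\overline{\psi(z)}\, \dd z, \qquad \Phi_\lambda(z) = \fy(\lambda x + z) - \lambda^{m-1}\la \lambda x + z, \xi\ra,
\end{equation*}
and then run a non-stationary (respectively stationary) phase analysis.

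For the inclusion \eqref{eq:chirpconclusion1}, I would fix $(x_0,\xi_0) \in T^*\rr d \setminus 0$ with $\xi_0 \neq \nabla \fy_m(x_0)$ (this also covers $x_0 = 0$, $\xi_0 \neq 0$ since $\nabla \fy_m(0) = 0$) and pick an open neighborhood $U$ of $(x_0,\xi_0)$ on which $|\nabla \fy_m(x) - \xi| \geqs c > 0$. Using the $(m-1)$-homogeneity $\nabla \fy_m(\lambda x + z) = \lambda^{m-1}\nabla\fy_m(x + z/\lambda)$ together with $|\nabla p(\lambda x + z)| \lesssim \lambda^{m-2}$ from $\deg p \leqs m-1$, I would obtain
\begin{equation*}
\nabla_z \Phi_\lambda(z) = \lambda^{m-1}\bigl(\nabla\fy_m(x) - \xi\bigr) + O\bigl(\lambda^{m-2}(1 + |z|)\bigr),
\end{equation*}
so $|\nabla_z \Phi_\lambda(z)| \gtrsim \lambda^{m-1}$ for $(x,\xi) \in U$, $|z| \leqs \delta \lambda$ and $\lambda$ large. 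I would cut off at $|z| \sim \delta \lambda$: on the inner piece, repeated integration by parts against $L = |\nabla\Phi_\lambda|^{-2}\nabla\Phi_\lambda \cdot \nabla$ gains a factor $\lambda^{-(m-1)}$ per step (using that $\partial^\alpha \Phi_\lambda$ is controlled by $\lambda^{m-|\alpha|}$ there), while on the outer piece the Schwartz decay $|\psi(z)| \lesssim \eabs{z}^{-N}$ gives $\lesssim \lambda^{-N'}$. Both contributions are $O(\lambda^{-K})$ for every $K$, uniformly in $(x,\xi) \in U$, which yields $(x_0,\xi_0) \notin \WF_{\rm g}^{m-1}(u)$.

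For the equality \eqref{eq:chirpconclusion2} in $d = 1$, I would fix $x_0 \in \ro \setminus 0$ and show that $V_\psi u(\lambda x_0, \lambda^{m-1}\fy_m'(x_0))$ does not decay rapidly. Taylor expansion of $\fy$ around $\lambda x_0$ is exact (since $\fy$ is a polynomial of degree $m$), and the choice $\xi = \fy_m'(x_0)$ precisely cancels the $\lambda^{m-1}$-order linear-in-$z$ contribution from $\fy_m$, leaving
\begin{equation*}
\Phi_\lambda(z) = C_\lambda + p'(\lambda x_0) z + \tfrac{1}{2}\fy_m''(\lambda x_0) z^2 + \sum_{k=3}^{m} \tfrac{1}{k!}\fy^{(k)}(\lambda x_0) z^k,
\end{equation*}
where $|p'(\lambda x_0)| \lesssim \lambda^{m-2}$, $\fy_m''(\lambda x_0) = \lambda^{m-2} \fy_m''(x_0)$ with $\fy_m''(x_0) \neq 0$ (as $\fy_m(x) = c x^m$, $x_0 \neq 0$), and the $z^k$ coefficients for $k \geqs 3$ are of order $\lambda^{m-k}$, strictly lower for $|z|$ bounded. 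Completing the square in $z$ absorbs the linear term into a bounded translation $z \mapsto z + z_\lambda$ (with $z_\lambda = O(1)$), and by Proposition \ref{prop:windowinvariance} I may choose $\psi$ nonzero at the limit of $z_\lambda$. A standard Gaussian/stationary-phase calculation then gives
\begin{equation*}
|V_\psi u(\lambda x_0, \lambda^{m-1}\fy_m'(x_0))| \asymp \lambda^{-(m-2)/2}, \qquad \lambda \to \infty,
\end{equation*}
which is not rapidly decreasing. Hence $(x_0, \fy_m'(x_0)) \in \WF_{\rm g}^{m-1}(u)$, and combined with \eqref{eq:chirpconclusion1} and the $(m-1)$-conic invariance this yields \eqref{eq:chirpconclusion2}.

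The main obstacle will be the stationary phase step in the second part: I have to show that the cubic and higher $z^k$ terms (coefficients $O(\lambda^{m-k})$) and the sub-leading correction $p''(\lambda x_0)/2 = O(\lambda^{m-3})$ to the quadratic coefficient produce only $o(1)$ perturbations of the Gaussian asymptotic, so that the STFT genuinely saturates the $\lambda^{-(m-2)/2}$ rate rather than possibly decaying faster or oscillating down to zero. The hypothesis that $\fy$ is even or odd presumably enters here to force $p$ to inherit the parity of $\fy_m$, which rules out spurious real critical points of the full phase $\fy(y) - \lambda^{m-1}\fy_m'(x_0) y$ (besides the one near $y = \lambda x_0$ and the one near $y = -\lambda x_0$ killed by the window) that could leak into the window-localized asymptotics and destructively interfere with the leading contribution.
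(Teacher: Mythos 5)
Your argument for the inclusion \eqref{eq:chirpconclusion1} is essentially the paper's: isolate a direction $j$ in which $\xi_j - \partial_j\fy_m(x)$ is bounded below, observe $\partial_j p(\lambda y) = O(\lambda^{m-2})$, and apply the non-stationary phase theorem in that variable to get superpolynomial decay of $V_\psi u(\lambda x,\lambda^{m-1}\xi)$ uniformly on a neighborhood of $(x_0,\xi_0)$. The change of variables $y=\lambda x+z$ versus the paper's rescaling $y \mapsto \lambda y$ is immaterial.

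For the equality \eqref{eq:chirpconclusion2} you take a genuinely different route. The paper does \emph{not} compute any asymptotics. It uses only: (a) the inclusion \eqref{eq:chirpconclusion1}; (b) $u \notin \cS(\ro)$ hence $\WF_{\rm g}^{m-1}(u) \neq \emptyset$; (c) the reflection identities \eqref{eq:evensteven0}, \eqref{eq:evensteven1}, \eqref{eq:evensteven2}. For $\fy$ even, $m$ is even, $\fy_m'$ is odd, $W := \{(x,\fy_m'(x)): x\neq 0\}$ is $\pm$-symmetric, $u$ is even, and \eqref{eq:evensteven1} forces the wave front set to be a $\pm$-symmetric nonempty subset of a set with exactly two $(m-1)$-conic rays, hence all of $W$. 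For $\fy$ odd, one assumes the inclusion is strict and derives a contradiction by computing $\WF_{\rm g}^{m-1}(\check u)$ in two ways, using $\check u = \overline u$ together with \eqref{eq:evensteven0} and \eqref{eq:evensteven2}. This is short and avoids second-order stationary phase entirely.

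Your alternative — computing $|V_\psi u(\lambda x_0, \lambda^{m-1}\fy_m'(x_0))| \asymp \lambda^{-(m-2)/2}$ directly — is plausible and, if carried out, would be stronger (it would not need the parity hypothesis at all and would give a precise rate). But as written it has a real gap: the claim that the linear term $p'(\lambda x_0)z$, which is $O(\lambda^{m-2})$ and hence of the \emph{same} order as the quadratic term, plus the $k\geqs 3$ terms $O(\lambda^{m-k})$, produce only $o(1)$ perturbations of the Gaussian leading term is precisely the hard part, and you leave it unproven. It is not a routine application of the stationary phase lemma because the rescaled phase $\lambda^{-(m-2)}\Phi_\lambda(z)$ still depends on $\lambda$, and the translation $z_\lambda$ moves with $\lambda$; you would need a version with $\lambda$-dependent amplitude and phase (e.g.\ a careful use of \cite[Theorem~7.7.5]{Hormander0} with explicit error control) and a separate check that the leading coefficient does not vanish. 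Moreover, your diagnosis of why the parity hypothesis is present — ``to rule out spurious real critical points leaking into the window-localized asymptotics'' — is not correct. On a compactly supported window centered at $\lambda x_0$ there are no competing real critical points to leak in; the inner/outer splitting you already used for the inclusion handles that. In the paper the parity is used purely for the reflection/symmetry argument, and if you pursue your stationary-phase route you should either drop the parity hypothesis (and prove a stronger theorem) or use it the way the paper does. As a side remark: when $\fy$ does have the parity of $\fy_m$, $p$ inherits it, so $\deg p \leqs m-2$ and $p'(\lambda x_0)=O(\lambda^{m-3})$, which makes the linear term genuinely subdominant and $z_\lambda \to 0$; that observation would simplify your Gaussian computation substantially, but you did not invoke it.
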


\begin{proof}
Set 
\begin{align*}
W 
& = \{ (x, \nabla \fy_m (x) ) \in \rr {2d}: \ x \in \rr d \setminus 0 \} \subseteq T^* \rr d \setminus 0.
\end{align*}
Then $W$ is an $(m-1)$-conic set in $T^* \rr d \setminus 0$.

Suppose $(x_0, \xi_0) \in \rr {2d} \setminus 0$ and $(x_0, \xi_0) \notin W$. 
Then there exists $1 \leqs j \leqs d$ such that $\xi_{0,j} \neq \partial_j \fy_m (x_0)$. 
Thus there exists an open set $U$ such that $(x_0,\xi_0) \in U$, and $0 < \ep \leqs 1$, $\delta > 0$,
such that
\begin{equation*}
(x,\xi) \in U,  \quad |x-y| \leqs \delta \sqrt{2} \quad \Longrightarrow \quad  |\xi_j - \partial_j \fy_m (x) | \geqs 2 \ep, \quad | \partial_j (\fy_m (x) - \fy_m(y) )| \leqs \frac{\ep}{2}.
\end{equation*}
By \eqref{eq:polderivative1} we have 
\begin{equation*}
\lambda^{1-m} | \partial_j p (\lambda y) |
\leqs \frac{\ep}{2}
\end{equation*}
if $(x,\xi) \in U$, $|x-y| \leqs \delta \sqrt{2}$ and $\lambda \geqs L$ where $L \geqs 1$ is sufficiently large. 

Using \eqref{eq:phasederivative1} we obtain if $(x,\xi) \in U$, $|x-y| \leqs \delta \sqrt{2}$ and $\lambda \geqs L$ 
\begin{equation}\label{eq:lowerboundfas}
\left| \xi_j -  \lambda^{-m} \partial_j \left( \fy( \lambda y) \right)  \right| 
\geqs |\xi_j - \partial_j \fy_m (x) | - \left( | \partial_j ( \fy_m (y) - \fy_m (x) ) | + \lambda^{1-m} | \partial_j p (\lambda y) | \right)
\geqs \ep.
\end{equation}

Let $\psi \in C_c^\infty(\rr d) \setminus 0$ have $\supp \psi \subseteq \rB_\delta$. 
We denote by $y' \in \rr {d-1}$ the vector $y \in \rr d$ except coordinate $j$. 
The stationary phase theorem \cite[Theorem~7.7.1]{Hormander0}
gives, for any $k \in \no$, and any $\lambda \geqs L$, if $(x,\xi) \in U$, using \eqref{eq:lowerboundfas},
\begin{equation*}
\begin{aligned}
|V_\psi u ( \lambda x, \lambda^{m-1} \xi)|
& = (2 \pi)^{-\frac{d}{2}} \left| \int_{\rr d} e^{i ( \fy(y)  - \lambda^{m-1} \la y,  \xi \ra )} \overline{\psi( \lambda (\lambda^{-1} y-x) )} \, \dd y \right| \\
& = (2 \pi)^{-\frac{d}{2}} \lambda^d \left| \int_{| x - y | \leqs \delta} e^{i \lambda^{m} ( \lambda^{-m} \fy(\lambda y) - \la y, \xi \ra )} \overline{\psi( \lambda (y-x) )} \, \dd y \right| \\
& \leqs C \lambda^d \int_{ | x' - y' | \leqs \delta } \sum_{n = 0}^k \lambda^{n} \sup_{| x_j - y_j | \leqs \delta} |(\partial_j^n\psi)( \lambda (y-x) )| \, |\xi_j - \lambda^{-m} \partial_j \left( \fy( \lambda y) \right) |^{n - 2k} \\
& \qquad \qquad \qquad \qquad \qquad \qquad \qquad \qquad \qquad \qquad \qquad  
\times \lambda^{m (n-2k)} \, \dd y' \\
& \leqs C_k \ep^{- 2 k} \sum_{n = 0}^k \lambda^{d + n + m (n-2k)} \\
& \leqs C_{k,\ep} \lambda^{d - k(m-1)}. 
\end{aligned}
\end{equation*}
This shows that $(x_0,\xi_0) \notin \WF_{\rm g}^{m-1} (u)$ and the inclusion \eqref{eq:chirpconclusion1} follows. 

Next let $d = 1$. 
If $\fy$ is even then $u$ is even, and $W = -W$ since $m$ is even, so by \eqref{eq:evensteven1} we have either $\WF_{\rm g}^{m-1} (u) =\emptyset$ or $\WF_{\rm g}^{m-1} (u) = W$. 
The former is not true since $u \notin \cS(\ro)$. Thus we have proved \eqref{eq:chirpconclusion2} when $\fy$ is even. 

If $\fy$ is odd then $m$ is odd and $\check u (x) = \overline{ u(x) } = e^{- i  \fy(x)}$. 
Again $\WF_{\rm g}^{m-1} (u) =\emptyset$ cannot hold since $u \notin \cS(\ro)$. 
If we assume that the inclusion \eqref{eq:chirpconclusion1} is strict we  get a contradiction from 
\eqref{eq:evensteven0} and \eqref{eq:evensteven2}. 
Indeed suppose e.g. 
\begin{equation*}
\WF_{\rm g}^{m-1} (u) 
= \{ (x, \fy_m'(x) ) \in \rr 2: \ x > 0 \}. 
\end{equation*}
By \eqref{eq:evensteven0} and \eqref{eq:evensteven2} we then get the contradiction 
\begin{align*}
\WF_{\rm g}^{m-1} ( \check u) 
& = \{ (x, - \fy_m'(x) ) \in \rr 2: \ x < 0 \} \\
& = \{ (x, - \fy_m'(x) ) \in \rr 2: \ x > 0 \}
= \WF_{\rm g}^{m-1} ( \overline{u} ).   
\end{align*}
This proves \eqref{eq:chirpconclusion2} when $\fy$ is odd. 
\end{proof}

We would also like to determine $\WFgs (u)$ when $s \neq m-1$. 
The following two results treat this question. 

\begin{prop}\label{prop:chirpnegative1}
If $m \geqs 2$, $s > m-1$, and $\fy$ is a real polynomial defined by \eqref{eq:phasefunction}, \eqref{eq:polynomial1} and \eqref{eq:principalpart},
and $u$ is defined by \eqref{eq:chirpdef}, then 
\begin{equation}\label{eq:chirpconclusion3}
\WFgs (u) \subseteq (\rr d \setminus 0) \times \{ 0 \}. 
\end{equation}
If $d = 1$ and $\fy$ is even or odd then 
\begin{equation}\label{eq:chirpconclusion4}
\WFgs (u) 
= (\ro \setminus 0) \times \{ 0 \}.
\end{equation}
\end{prop}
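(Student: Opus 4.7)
The plan is to follow the same stationary-phase strategy as in the proof of Theorem \ref{thm:chirpWFgs}, but now exploiting the fact that $s > m-1$ forces the $\lambda^s$-scaled frequency to dominate the polynomial phase gradient, so that any direction with $\xi \neq 0$ becomes non-stationary. For the inclusion \eqref{eq:chirpconclusion3}, fix $(x_0,\xi_0) \in T^*\rr d \setminus 0$ with $\xi_0 \neq 0$. Choose $1 \leqs j \leqs d$ and $\ep > 0$ such that $|\xi_{0,j}| \geqs 2\ep$, and pick a bounded open neighborhood $U$ of $(x_0,\xi_0)$ where $|\xi_j| \geqs \ep$. Take a window $\psi \in C_c^\infty(\rr d) \setminus 0$ with $\supp\psi \subseteq \rB_\delta$.

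After the change of variables $y \mapsto \lambda y$ (as in \thmref{thm:chirpWFgs}) the short-time Fourier transform becomes
\begin{equation*}
V_\psi u(\lambda x, \lambda^s \xi)
= (2\pi)^{-d/2}\lambda^d \int_{\rr d} e^{i\lambda^{s+1}\Psi(y;\lambda,\xi)} \overline{\psi(\lambda(y-x))}\, \dd y,
\end{equation*}
with phase $\Psi(y;\lambda,\xi) = \lambda^{-s-1}\fy(\lambda y) - \la y,\xi\ra$. The essential computation is
\begin{equation*}
\partial_{y_j}\Psi = \lambda^{-s}(\partial_j \fy)(\lambda y) - \xi_j,
\end{equation*}
and since $\fy$ has degree $m$, one has $(\partial_j\fy)(\lambda y) = O(\lambda^{m-1})$ uniformly for $y$ bounded, so $\lambda^{-s}(\partial_j\fy)(\lambda y) = O(\lambda^{m-1-s}) \to 0$ as $\lambda \to \infty$. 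Hence for $\lambda$ large and $(x,\xi) \in U$ one obtains $|\partial_{y_j}\Psi| \geqs \ep/2$, and moreover all higher derivatives of $\partial_{y_j}\Psi$ in $y$ are $O(\lambda^{m-1-s}) = o(1)$. Applying stationary phase (integration by parts) $k$ times in the variable $y_j$ with large parameter $\omega = \lambda^{s+1}$, each step gains $\omega^{-1} = \lambda^{-(s+1)}$ and costs at most $\lambda$ from differentiating $\psi(\lambda(y-x))$, yielding a bound $|V_\psi u(\lambda x, \lambda^s\xi)| \lesssim \lambda^{d-ks}$, which is $O(\lambda^{-N})$ for any $N$ upon taking $k$ large (since $s > m-1 \geqs 1 > 0$). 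Thus $(x_0,\xi_0) \notin \WFgs(u)$, proving \eqref{eq:chirpconclusion3}.

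For \eqref{eq:chirpconclusion4}, suppose $d=1$. Since $|u|\equiv 1$, $u \notin \cS(\ro)$, so $\WFgs(u)$ is a nonempty $s$-conic subset of $(\ro \setminus 0)\times\{0\}$; as $s$-conicity here means invariance under $(x,0) \mapsto (\lambda x, 0)$ for $\lambda > 0$, the set $\WFgs(u)$ is a nonempty union of the two rays $(0,\infty)\times\{0\}$ and $(-\infty,0)\times\{0\}$. If $\fy$ is even then $\check u = u$, so \eqref{eq:evensteven1} gives $\WFgs(u) = -\WFgs(u)$, forcing both rays to be present. If $\fy$ is odd then $\check u = \overline u$; combining \eqref{eq:evensteven0} with \eqref{eq:evensteven2} and restricting to points with $\xi = 0$ yields $\WFgs(\overline u) = \WFgs(u)$ on the $\xi$-zero section, whence $-\WFgs(u) = \WFgs(\check u) = \WFgs(\overline u) = \WFgs(u)$, and again both rays must appear.

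The only real technical point is the stationary-phase book-keeping in the middle paragraph: one must verify that successive derivatives of $(\partial_{y_j}\Psi)^{-1}$ remain bounded (they do, thanks to $s > m-1$ killing all higher $y$-derivatives of $\Psi$), so that the entire cost of integration by parts is absorbed in the $\lambda$-derivatives of $\psi(\lambda(y-x))$, and the net gain $\lambda^{-s}$ per step suffices for rapid decay.
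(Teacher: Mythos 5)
Your proof is correct and follows essentially the same route as the paper: after the change of variables $y \mapsto \lambda y$, write the exponent as $\lambda^{s+1}\Psi$ with $\Psi = \lambda^{-s-1}\fy(\lambda y) - \la y, \xi\ra$, observe that $\partial_{y_j}\Psi = \lambda^{-s}(\partial_j\fy)(\lambda y) - \xi_j$ is bounded away from zero near any $(x_0,\xi_0)$ with $\xi_0 \neq 0$ once $\lambda$ is large (using $s > m-1$), invoke non-stationary phase to get $\lesssim \lambda^{d - ks}$, and deduce the equality part from the evenness/oddness symmetry argument; the paper does exactly this, citing H\"ormander's Theorem~7.7.1 rather than carrying out the integration-by-parts bookkeeping explicitly as you do.
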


\begin{proof}
Suppose $(x_0, \xi_0) \in T^* \rr d$ and $\xi_0 \neq 0$, that is $\xi_{0,j} \neq 0$ for some $1 \leqs j \leqs d$. 
From \eqref{eq:phasederivative1} we obtain
\begin{equation*}
\lambda^{-1-s} \partial_j \left( \fy( \lambda y) \right)
= \lambda^{m-1-s} \left(  \partial_j \fy_m (y) + \lambda^{1-m} \partial_j p(\lambda y) \right).
\end{equation*}
Thus from $s > m-1$, using \eqref{eq:polderivative1},
it follows that there exists $U \subseteq \rr {2d}$ such that $(x_0, \xi_0) \in U$, 
and $0 < \ep \leqs 1$, 
$L \geqs 1$ such that 
\begin{equation*}
| \xi_j - \lambda^{-1-s} \partial_j \left( \fy( \lambda y) \right)| \geqs \ep
\end{equation*}
when $(x,\xi) \in U$, $| x - y| \leqs \sqrt 2$ and $\lambda \geqs L$. 

Let $\psi \in C_c^\infty(\ro) \setminus 0$ be such that $\supp \psi \subseteq \rB_1$. 
The stationary phase theorem \cite[Theorem~7.7.1]{Hormander0}
yields, for any $k \in \no$, and any $\lambda \geqs L$, if $(x,\xi) \in U$, 
\begin{equation*}
\begin{aligned}
|V_\psi u ( \lambda x, \lambda^s \xi)|
& = (2 \pi)^{-\frac{d}{2}} \left| \int_{\rr d} e^{i ( \fy(y)  - \lambda^s \la y, \xi \ra )} \overline{\psi( \lambda (\lambda^{-1} y-x) )} \, \dd y \right| \\
& = (2 \pi)^{-\frac{d}{2}} \lambda^d \left| \int_{\rr d} e^{i \lambda^{1+s} ( \lambda^{-1-s} \fy(\lambda y) - \la y, \xi \ra )} \overline{\psi( \lambda (y-x) )} \, \dd y \right| \\
& \leqs C \lambda^d \int_{ | x' - y' | \leqs 1} \sum_{n = 0}^k \lambda^{n} \sup_{|x_j-y_j| \leqs 1} |(\partial_j^n\psi)( \lambda (y-x) )| \, |\xi_j - \lambda^{-1-s} \partial_j \left( \fy( \lambda y) \right) |^{n - 2k} \\
& \qquad \qquad \qquad \qquad \qquad \times \lambda^{(1+s) (n-2k)} \, \dd y' \\
& \leqs C_k \lambda^{d- k s}  \, \ep^{- 2 k}. 
\end{aligned}
\end{equation*}
This shows that $(x_0,\xi_0) \notin \WFgs(u)$ and \eqref{eq:chirpconclusion3} follows. 

When $d = 1$ and $\fy$ is either even or odd then \eqref{eq:chirpconclusion4} follows as in the proof of Theorem \ref{thm:chirpWFgs}. 
\end{proof}

\begin{prop}\label{prop:chirpnegative2}
Let $m \geqs 2$, $0 < s < m-1$, and $\fy$ be a real polynomial defined by \eqref{eq:phasefunction}, \eqref{eq:polynomial1} and \eqref{eq:principalpart}. 
Suppose $\fy_m (x) \neq 0$ for all $x \in \rr d \setminus 0$.
If $u$ is defined by \eqref{eq:chirpdef} then 
\begin{equation}\label{eq:chirpconclusion5}
\WFgs (u) \subseteq  \{ 0 \} \times (\rr d \setminus 0). 
\end{equation}
If $d = 1$ and $\fy$ is even then 
\begin{equation}\label{eq:chirpconclusion6}
\WFgs (u) 
= \{ 0 \} \times (\ro \setminus 0).
\end{equation}
\end{prop}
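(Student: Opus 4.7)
The proof follows the pattern of Theorem \ref{thm:chirpWFgs} and Proposition \ref{prop:chirpnegative1}, but with a rescaling chosen so that the exponent $\lambda^m$, arising from the principal part $\varphi_m$, becomes dominant. Under the hypothesis $0 < s < m - 1$, this exponent dominates both the lower order contributions of $\varphi$ and the dual term $\lambda^{1+s}\langle z,\xi\rangle$, which is the key observation enabling a non-stationary phase estimate.

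To establish \eqref{eq:chirpconclusion5}, fix $(x_0,\xi_0)\in T^*\rr d \setminus 0$ with $x_0 \neq 0$. Since $\varphi_m$ is homogeneous of degree $m$ and does not vanish on $\rr d \setminus 0$, Euler's identity $\langle x_0, \nabla\varphi_m(x_0)\rangle = m\varphi_m(x_0)$ gives $\nabla\varphi_m(x_0)\neq 0$. Choose $j$ with $\partial_j\varphi_m(x_0)\neq 0$ and, by continuity, $\delta, c > 0$ and an open neighborhood $U \ni (x_0,\xi_0)$ such that $|\partial_j\varphi_m(z)| \geqs 2c$ whenever $|z - x_0|\leqs 2\delta$, and $|x - x_0| \leqs \delta$ for all $(x,\xi)\in U$.

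Pick $\psi \in C_c^\infty(\rr d)\setminus 0$ with $\supp\psi \subseteq \rB_1$. Changing variables $y = \lambda z$ in the STFT,
\begin{equation*}
V_\psi u(\lambda x, \lambda^s \xi) = (2\pi)^{-d/2}\,\lambda^d \int e^{i\lambda^m f_\lambda(z;x,\xi)}\,\overline{\psi(\lambda(z-x))}\, dz,
\end{equation*}
where $f_\lambda(z;x,\xi) = \lambda^{-m}\varphi(\lambda z) - \lambda^{1+s-m}\langle z,\xi\rangle$. Using $\varphi = \varphi_m + p$ with $\deg p < m$, the homogeneity of $\varphi_m$, and a direct computation analogous to \eqref{eq:polderivative1},
\begin{equation*}
\partial_{z_j} f_\lambda(z;x,\xi) = \partial_j\varphi_m(z) + O(\lambda^{-1}) - \lambda^{1+s-m}\xi_j, \qquad \partial_{z_j}^n f_\lambda(z;x,\xi) = \partial_j^n\varphi_m(z) + O(\lambda^{-1}) \quad (n\geqs 2),
\end{equation*}
uniformly for $z$ in a bounded set and $(x,\xi) \in U$. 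Since $s < m - 1$ forces $\lambda^{1+s-m}\to 0$, there exists $L \geqs \max(1,\delta^{-1})$ such that $|\partial_{z_j}f_\lambda(z;x,\xi)| \geqs c$ whenever $(x,\xi)\in U$, $|z - x|\leqs 1/\lambda$, and $\lambda \geqs L$, and such that the higher derivatives are uniformly bounded there.

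Applying the non-stationary phase estimate \cite[Theorem~7.7.1]{Hormander0} in the variable $z_j$ (with $z' \in \rr {d-1}$ as parameter) and large parameter $\lambda^m$, exactly as in the proof of Theorem \ref{thm:chirpWFgs}, yields
\begin{equation*}
|V_\psi u(\lambda x, \lambda^s \xi)| \leqs C_k\,\lambda^{d - k(m-1)}, \quad (x,\xi)\in U, \quad \lambda \geqs L, \quad k \in \no.
\end{equation*}
Since $m - 1 \geqs 1$ and $k$ is arbitrary, this shows $(x_0,\xi_0)\notin\WFgs(u)$, proving \eqref{eq:chirpconclusion5}. For \eqref{eq:chirpconclusion6}, when $d = 1$ and $\varphi$ is even we have $\check u = u$, so \eqref{eq:evensteven1} gives $\WFgs(u) = -\WFgs(u)$; since $|u|\equiv 1$ implies $u \notin \cS(\ro)$ and hence $\WFgs(u)\neq\emptyset$, while \eqref{eq:chirpconclusion5} confines $\WFgs(u)$ to the two $s$-conic half-lines $\{0\}\times\ro_\pm$, invariance under negation forces both to lie in $\WFgs(u)$, yielding the equality. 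The main technical point is the uniform lower bound $|\partial_{z_j} f_\lambda| \geqs c$, which depends on both the $O(\lambda^{-1})$ remainder from $p$ and the factor $\lambda^{1+s-m}$ tending to zero; this is precisely where the assumption $s < m - 1$ is used.
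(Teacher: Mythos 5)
Your proof is correct and follows essentially the same route as the paper's: Euler's homogeneous function theorem gives $\nabla\fy_m(x_0)\neq 0$, and a non-stationary phase estimate via \cite[Theorem~7.7.1]{Hormander0} in the $j$-th coordinate with large parameter $\lambda^m$, where $s<m-1$ makes the term $\lambda^{1+s-m}\xi_j$ negligible against $\partial_j\fy_m$, yields the bound $\lambda^{d-k(m-1)}$. The equality argument for $d=1$, $\fy$ even (symmetry via \eqref{eq:evensteven1}, nonemptiness since $u\notin\cS(\ro)$, and $s$-conicity) is also the paper's argument, which it inherits from the proof of Theorem \ref{thm:chirpWFgs}.
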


\begin{proof}
Suppose $(x_0, \xi_0) \in T^* \rr d$ and $x_0 \neq 0$. 
The assumption $\fy_m (x) \neq 0$ for all $x \in \rr d \setminus 0$ and Euler's homogeneous function theorem 
imply that $\nabla \fy_m (x_0) \neq 0$, that is $\partial_j \fy_m (x_0) \neq 0$ for some $1 \leqs j \leqs d$. 
From \eqref{eq:phasederivative1} and \eqref{eq:polderivative1}
and $s < m-1$
it follows that there exists $U \subseteq \rr {2d}$ such that $(x_0, \xi_0) \in U$, 
$1 \leqs j \leqs d$
and $0 < \ep \leqs 1$, 
$L \geqs 1$ such that 
\begin{equation*}
| \lambda^{1 + s - m} \xi_j - \lambda^{-m} \partial_j \left( \fy( \lambda y) \right)| \geqs \ep
\end{equation*}
when $(x,\xi) \in U$, $| x - y| \leqs \ep \sqrt 2$ and $\lambda \geqs L$. 

Let $\psi \in C_c^\infty(\ro) \setminus 0$ be such that $\supp \psi \subseteq \rB_\ep$. 
Again by the stationary phase theorem \cite[Theorem~7.7.1]{Hormander0}
we obtain, for any $k \in \no$, and any $\lambda \geqs L$, if $(x,\xi) \in U$, 
\begin{equation*}
\begin{aligned}
|V_\psi u ( \lambda x, \lambda^s \xi)|
& = (2 \pi)^{-\frac{d}{2}} \left| \int_{\rr d} e^{i ( \fy(y)  - \lambda^s \la y, \xi \ra )} \overline{\psi( \lambda (\lambda^{-1} y-x) )} \, \dd y \right| \\
& = (2 \pi)^{-\frac{d}{2}} \lambda^d \left| \int_{\rr d} e^{i \lambda^m ( \lambda^{-m} \fy(\lambda y) - \lambda^{1+s-m} \la y, \xi \ra )} \overline{\psi( \lambda (y-x) )} \, \dd y \right| \\
& \leqs C \lambda^d \int_{ | x' - y' | \leqs \ep}  \sum_{n = 0}^k \lambda^{n} \sup_{|x_j-y_j| \leqs \ep} |(\partial_j^n\psi)( \lambda (y-x) )| \, | \lambda^{1 + s - m} \xi_j - \lambda^{-m} \partial_j \left( \fy( \lambda y) \right) |^{n - 2k} \\
& \qquad \qquad \qquad \qquad \qquad \times \lambda^{m (n-2k)} \, \dd y' \\
& \leqs C_k \lambda^{d- k(m-1) }  \, \ep^{- 2 k}. 
\end{aligned}
\end{equation*}
This shows that $(x_0,\xi_0) \notin \WFgs(u)$ and \eqref{eq:chirpconclusion3} follows. 

When $d = 1$ and $\fy$ is even then \eqref{eq:chirpconclusion6} follows as in the proof of Theorem \ref{thm:chirpWFgs}. 
\end{proof}

\begin{example}\label{ex:airytype}
Let $k \in \no \setminus 0$ and consider the differential equation
\begin{equation*}
u^{(k)} - x u = 0. 
\end{equation*}
for $u \in \cS'(\ro)$. 
When $k = 2$ this is the Airy equation. 
Fourier transformation gives
\begin{equation}\label{eq:airytypefourier}
i^k \xi^k \wh u + D \wh u = 0 
\end{equation}
which is solved by 
\begin{equation*}
\wh u (\xi) = C \exp \left( -i^{k+1} \frac{\xi^{k+1}}{k+1} \right), \quad C \in \co. 
\end{equation*}
This function belongs to $\cS'(\ro)$ provided $k \notin 1 + 4 \no$. 

The equation \eqref{eq:airytypefourier} can be written $a^w(x,D) \wh u = 0$
where 
\begin{equation*}
a(x,\xi) = i^k x^k + \xi. 
\end{equation*}

By Example \ref{ex:polynomial} we know that $a \in G^k \cap G^{k,k}$. 
Suppose $k = 2 n$ with $n \in \no \setminus 0$. 
Since $a(x,\xi) = 0$ when $\xi = (-1)^{n+1} x^{2n}$, 
it follows from Definition \ref{def:noncharacteristic} that $(x,(-1)^{n+1} x^{2n}) \in \charac_{2n} (a)$
for any $x \neq 0$. 
It holds 
\begin{equation}\label{eq:charequality1}
\charac_{2n} (a) = \{ (x,(-1)^{n+1} x^{2n}) \in \rr 2, \ x \neq 0\}.
\end{equation}
In fact it suffices to show 
\begin{equation}\label{eq:charinclusion1}
\charac_{2n} (a) \subseteq \{ (x,(-1)^{n+1} x^{2n}) \in \rr 2, \ x \neq 0\}. 
\end{equation}

Suppose $(x_0,\xi_0) \in \sr 1$ with $\xi_0 \neq (-1)^{n+1} x_0^{2n}$. 
In order to show \eqref{eq:charinclusion1} we must show $(x_0,\xi_0) \notin \charac_{2n} (a)$. 
There exist $\ep, \delta > 0$ such that
\begin{equation*}
| \xi - (-1)^{n+1} x^{2n} | 
\geqs \delta ( |x| + |\xi|^{\frac{1}{2n}} )^{2n}
\end{equation*}
if $(x,\xi) \in (x_0, \xi_0) + \rB_\ep$. 
This inequality is $2n$-conic, that is invariant to the transformation $T^* \ro \setminus 0 \ni ( x, \xi) \mapsto ( \lambda x, \lambda^{2n} \xi)$ for $\lambda > 0$. 
It follows that 
\begin{equation*}
| a(x,\xi) | 
\geqs \delta ( |x| + |\xi|^{\frac{1}{2n}} )^{2n}
\end{equation*}
when $(x,\xi) \in \wt \Gamma_{2n, (x_0,\xi_0),\ep}$. 
Hence from Lemma \ref{lem:sconesequiv} it follows $(x_0,\xi_0) \notin \charac_{2n} (a)$ and we have shown \eqref{eq:charinclusion1}
and thereby \eqref{eq:charequality1}. 

Invoking Theorem \ref{thm:microelliptic} we obtain
\begin{align*}
\WF_{\rm g}^{2n} ( \wh u) 
& \subseteq \WF_{\rm g}^{2n} (a^w(x,D) \wh u)  \bigcup \charac_{2n} (a) \\
& = \charac_{2n} (a) =  \{ (x,(-1)^{n+1} x^{2n}) \in \rr 2, \ x \neq 0\}. 
\end{align*}
Thus we have found an alternative proof of a particular case of the inclusion \eqref{eq:chirpconclusion1} in Theorem \ref{thm:chirpWFgs} when $m$ is odd. 
From \eqref{eq:chirpconclusion2} we know that the inclusion is actually an equality.  

Adding this information and applying Proposition \ref{prop:sGabormetaplectic} (i) we obtain
\begin{equation*}
\WF_{\rm g}^{\frac{1}{2n}} ( u) 
= - \J \WF_{\rm g}^{2n} ( \wh u)
= \{ ((-1)^{n} x^{2n},x) \in \rr 2, \ x \neq 0\}. 
\end{equation*}

If $n=1$ then $u$ is the Airy function (multiplied by $C$) \cite{Hormander0}, 
and thus
\begin{equation*}
\WF_{\rm g}^{\frac{1}{2}} ( u) 
= \{ ( -x^{2},x) \in \rr 2, \ x \neq 0\}. 
\end{equation*}

This can be compared to \cite[Example~8.5]{PRW1} 
which says that 
\begin{equation*}
\WFg ( u) 
= \WF_{\rm g}^{1} ( u) 
= \{ (x,0) \in \rr 2, \ x < 0\}. 
\end{equation*}
\end{example}

\section*{Acknowledgment}
Work partially supported by the MIUR project ``Dipartimenti di Eccellenza 2018-2022'' (CUP E11G18000350001).

%%%%%%%%%%%%%%%%%%%%%%%%%%%%%

\end{document}